\documentclass[11pt, leqno]{amsart}

\usepackage[square, numbers, comma]{natbib} 

\usepackage[usenames,dvipsnames,svgnames,table]{xcolor}
\usepackage{amsmath,amsthm,amssymb,amssymb,esint,verbatim,tabularx,graphicx}
\usepackage{enumerate}
\usepackage{amsmath}
\usepackage{fancyhdr}
\usepackage{pgf,tikz}
\usetikzlibrary{arrows}

\usepackage[utf8]{inputenc}
\usepackage{color}
\usepackage{hyperref}
\usepackage{pdfpages}
\usepackage{mathtools}
\usepackage{subfigure}

\hypersetup{urlcolor=blue, colorlinks=true} 

\newtheorem{theorem}{Theorem}[section]
\newtheorem{proposition}[theorem]{Proposition}
\newtheorem{lemma}[theorem]{Lemma}
\newtheorem{corollary}[theorem]{Corollary}

\theoremstyle{definition}

\newtheorem{definition}[theorem]{Definition}
\newtheorem{remark}[theorem]{Remark}
\newtheorem{example}[theorem]{Example}

\newcommand{\R}{\ensuremath{\mathbb{R}}}
\newcommand{\N}{\ensuremath{\mathbb{N}}}
\newcommand{\Z}{\ensuremath{\mathbb{Z}}}
\newcommand{\Q}{\mathbb{Q}}

\newcommand{\Levy}{\ensuremath{\mathcal{L}}}
\newcommand{\dd}{\,\mathrm{d}}
\newcommand{\limsu}{\varlimsup}

\newcommand{\q}{\mathcal{Q}}
\newcommand{\supsupp}{\sup_{b\in  \supp (\mu) }}

\DeclareMathOperator{\supp}{supp}
\DeclarePairedDelimiter\floor{\lfloor}{\rfloor}



\begin{document}

\title[Nonlocal operators and the Liouville theorem]{Characterization of nonlocal diffusion operators 
satisfying the Liouville theorem.
 Irrational numbers and subgroups of $\R^d$}

\author[N.~Alibaud]{Natha\"el Alibaud}
\address[N.~Alibaud]{ENSMM\\
26 Chemin de l'Epitaphe\\ 25030 Besan\c{c}on cedex\\ Fran\-ce
and\\
LMB\\ UMR CNRS 6623\\ Universit\'e de Bourgogne Franche-Comt\'e (UBFC)\\ France}
\email{nathael.alibaud\@@{}ens2m.fr}


\author[F.~del Teso]{F\'elix del Teso}
\address[F.~del Teso]{Department of Mathematical Sciences\\
Norwegian University of Science and Technology (NTNU)\\
N-7491 Trondheim, Norway} 
\email{felix.delteso@ntnu.no}

\author[J. Endal]{J\o rgen Endal}
\address[J. Endal]{Department of Mathematical Sciences\\
Norwegian University of Science and Technology (NTNU)\\
N-7491 Trondheim, Norway} 
\email{jorgen.endal\@@{}ntnu.no}

\author[E.~R.~Jakobsen]{Espen R. Jakobsen}
\address[E.~R.~Jakobsen]{Department of Mathematics\\
Norwegian University of Science and Technology (NTNU)\\
N-7491 Trondheim, Norway} 
\email{espen.jakobsen\@@{}ntnu.no}

\subjclass[2010]{
{35B53, 35B10, 35J70, 35R09, 60G51, 65R20}} 


\keywords{Liouville theorem,  Periodic solutions, Nonlocal diffusion operators, Propagation of maximum, Subgroups of $\R^d$, Irrational numbers}

\begin{abstract}
\noindent  We investigate the characterization of generators $\mathcal{L}$ of L\'evy processes satisfying the Liouville theorem: Bounded functions $u$ solving $\mathcal{L}[u]=0$ are constant. These operators are degenerate elliptic of the form $\mathcal{L}=\mathcal{L}^{\sigma,b}+\Levy^\mu$ for some  local part $
\mathcal{L}^{\sigma,b}[u]=\textup{tr}(\sigma \sigma^{\texttt{T}} D^2u)+b \cdot Du
$
and nonlocal part 
$$
\Levy^\mu[u](x)=\int \big(u(x+z)-u(x)-z \cdot Du(x) \mathbf{1}_{|z| \leq 1}\big) \dd \mu(z),
$$ 
where $\mu \geq 0$ is a so-called L\'evy measure possibly unbounded for small $z$. In this paper, we focus on the pure nonlocal case $\sigma=0$ and $b=0$, where we assume in addition that $\mu$ is symmetric which corresponds to self-adjoint pure jump L\'evy operators $\mathcal{L}=\Levy^\mu$. The case of general L\'evy operators $\mathcal{L}=\mathcal{L}^{\sigma,b}+\Levy^\mu$ will be considered in the forthcoming paper \cite{AlDTEnJa18}. In our setting, we  show that $\Levy^\mu[u]=0$ if and only if $u$ is periodic wrt the subgroup generated by the support of $\mu$. Therefore, the Liouville property holds if and only if this subgroup is dense, and in space dimension $d=1$ there is an equivalent condition in terms of  irrational numbers. In dimension $d \geq 1$, we have a clearer view of the operators \textit{not} satisfying the Liouville theorem whose general form is precisely identified.
The proofs are based on arguments of propagation of maximum. 
\end{abstract}

\maketitle 
\tableofcontents

\section{Introduction}

The classical Liouville theorem states that
bounded functions $u$ solving $\Delta u=0$ in the distributional sense are constant.
 We revisit this result for   nonlocal (or anomalous) diffusion operators  of the form 
\begin{equation}\label{def:levy}
\Levy ^\mu [ u ](x):={ \textup{P.V.} } \int_{\R^{d}  \setminus \{0\}} \big( u (x+z)- u  (x)\big) \dd\mu(z),
\end{equation}
 where $\mu \geq 0$ is symmetric and $
\int (|z|^2\wedge1)\dd\mu(z)<+\infty.$ 

\medskip

Our motivation goes back to the result of Courr\`ege \cite{Cou64} on linear operators satisfying the positive maximum principle. These operators can have an integro-differential part which is precisely $\Levy^\mu$. Our symmetry assumption on $\mu$ means that we are considering self-adjoint operators. Operators of the form \eqref{def:levy} coincides with generators of    symmetric pure-jump L\'evy processes \cite{Sat99,App09}. A famous example is the fractional Laplacian $-(-\Delta)^\frac{\alpha}{2}$ which corresponds to   $\alpha$-stable processes. Other examples from biology, mechanics, etc., are
convolution operators 
$
J\ast u-u
$
(cf. \cite{Cov08,BrCoHaVa17, BrCo18}),
relativistic Schr\"odinger operators $m^\alpha I-(m^2I-\Delta)^\frac{\alpha}{2}$ (cf. \cite{FaFe15,FaWe16}), or finite difference  discretization like   $(u(x+h)+u(x-h)-2u(x))/h^2 \approx \partial_{x}^2 u(x)$, as well as other general similar discretizations  in $\R^d$ for  local and nonlocal diffusion operators (cf. \cite{DTEnJa17b,DTEnJa17a,DTEnJa18a,DTEnJa18b}). 

\medskip

 There is a huge literature on the Liouville theorem and we focus our attention on nonlocal PDEs. Such a result is more or less understood for the fractional Laplacian or variants \cite{Lan72,BoKuNo02, StZh13,CaSi14, ZhChCuYu14, ChDALi15, Fal15,BaDPG-MQu17}, certain L\'evy operators \cite{BaBaGu00,PrZa04,Ser15b, Ser15a, R-OSe16a, R-OSe16b,DTEnJa17b,DTEnJa17a}, relativistic Schr\"{o}dinger operators \cite{FaWe16}, or convolution operators \cite{BrCoHaVa17, BrCo18}. The techniques vary from Fourier analysis, potential theory, probabilistic methods, or classical PDEs arguments. Let us also refer to works on propagation of maximum which is a closely related topic (cf. \cite{Cov08,Cio12,ShHa16,DTEnJa17b,DTEnJa17a,BrCoHaVa17,HuDuWu18,BrCo18}). 

\medskip

As far we know, there is not yet any complete classification of  nonlocal  operators for which the Liouville property holds.  This is precisely what we provide in  this paper  for operators of the form \eqref{def:levy}.  Our main theorem is a periodicity result for the bounded solutions of $\Levy^\mu[u]=0$ 
(cf. Theorem \ref{thm:period}).  It states that $\Levy^\mu[u]=0$ if and only if $u$ is periodic with respect to the subgroup $G(\supp (\mu))$ generated by the support. In particular, $\Levy^\mu$ satisfies the Liouville property if and only if $G(\supp (\mu))$ is dense (cf. Theorem \ref{thm:main-d}). 
If $d=1$,  we give an equivalent condition in terms of irrationality of the ratio between points in the support (cf. Theorem \ref{thm:main}). 

\medskip

We also precisely identify  the form of the operators $\Levy^\mu$ which do \textit{not} satisfy the Liouville property. Roughly speaking, this property \textit{fails} if and only if 
 $$
\supp (\mu) \subseteq \R^{n} \times \Z^m,
$$ 
for some $n<d$, $m \leq d-n$, and up to some change of coordinates.  If $d=1$ the corresponding measures $\mu$ are thus series of Dirac masses (cf. Proposition \ref{cor:consequences}), and in the general case $d \geq 1$ we get series of measures of dimension less or equal $d-1$ (cf. Theorem \ref{thm:consequences-d}).


\medskip

In particular,  every   one dimensional  nonpurely discrete measure  corresponds to a nonlocal operator satisfying the Liouville theorem.  This includes  measures whose supports contain intervals like absolutely continuous measures (e.g. 
the fractional Laplacian) or singular continuous Cantor measures.  
 Certain atomic measures may have the Liouville property as well like e.g.  $\mu(z)=\sum_{\pm} (\delta_{\pm a}(z)+\delta_{\pm b}(z))$  if $\frac{b}{a} \notin \Q$. 
To easily verify whether the Liouville theorem holds  if $d=1$,  we provide an algorithmic procedure in relation with irrational numbers (cf. Corollary \ref{item:method}).  
%
%

\medskip

 For general dimension $d \geq 1$,  measures containing balls in their supports like absolutely continuous measures enjoy the Liouville property. Other interesting examples are mean value operators with which we can recover the classical Liouville theorem (cf. Example \ref{ex:surfaceMultiD}), purely atomic measures with few points in the supports satisyfing a so-called Kronecker's approximation condition from group theory (cf. Corollary \ref{coro:CharKron}), or certain nonstandard discretizations of the Laplacian on nonuniform grids (cf. Example \ref{ex:discLapMultiDbis}). 

\medskip

Our  proofs are based on simple reasoning on irrational numbers, certain results on groups (cf. e.g. \cite{EfHaCL80,God87,Mar03,GoMo16} and the references therein), and pure PDEs arguments of  propagation of maximum (cf.
\cite{Cov08,Cio12,ShHa16,DTEnJa17b,DTEnJa17a,HuDuWu18, BrCoHaVa17,BrCo18}). 

\medskip

Note finally that our periodicity result Theorem \ref{thm:period} not only implies the Liouville theorem for sufficiently nice $\Levy^\mu$, but also charaterizes the bounded solutions of $\Levy^\mu[u]=0$ in every  case.  It is thus a natural extension of the classical Liouville theorem for general nonlocal operators. Moreover, it is possible to obtain a similar result for  general generators  of L\'evy processes, or equivalently general linear operators with constant coefficients satisfying the positive maximum principle a la  Courr\`ege.  This wider class includes local and nonlocal operators possibly nonself-adjoint. In a work in progress \cite{AlDTEnJa18}, we are writing a complete characterization of such operators satisfying the Liouville theorem as a byproduct of an extended version of the periodicity result Theorem \ref{thm:period}.

%
%



\medskip

\bigskip

 \noindent\textbf{Outline of the paper.}  General assumptions are presented in Section \ref{sec:as}.  Section \ref{sec:mainresults} concerns 
the  case where $d=1$ ($1$--$d$ for short)  whose main results are Theorem \ref{thm:main} and Corollary \ref{item:method} followed by representative examples. These results are obtained independently of the  case $d \geq 1$ (multi--$d$ for short). The analysis in the latter case  starts in Section \ref{sec:group} with the general periodicity result Theorem \ref{thm:period}. The classifications for the Liouville theorem are given in Theorems \ref{thm:main-d} and \ref{thm:consequences-d} (cf.  also Proposition \ref{cor:consequences}  for further  results  in $1$--$d$). Section \ref{sec:ex} gives representative multi--$d$ examples, and technical or complementary results are postponed in appendices. 

 
\subsubsection*{General notation}


The symbol $\wedge$ denotes $\min$.  The symbol $\subseteq$ denotes inclusion of sets and $\not\subseteq$ is the negation.   We define $\limsu:= \limsup$. 

\medskip

 Let $B_r(x)$  denote the ball of center $x$   and  radius $r$.  A point $x \in S \subseteq \R$ is \emph{isolated} in  the set $S$  if there exists $r >0$ such that $B_r(x) \cap S=\{x\}$. If every point of $S$ is isolated, $S$ is \emph{discrete}. 

\medskip

The function spaces $C_\textup{b}^\infty (\R^d) $ and $C_\textup{c}^\infty (\R^d) $ are the respective spaces of  smooth functions with bounded derivatives of all orders  and of smooth functions with compact support{s}.

%
%

\subsubsection*{{Measures on $ \R^d  \setminus \{0\}$}}

We only consider nonnegative  Radon  measures  $\mu$ on $\Omega:= \R^d  \setminus \{0\}$,  i.e. Borel measures finite on compact sets.  Their {\it supports} are defined as 
\begin{equation*}
{\supp(\mu)} = \big\{z\in  \Omega  \ : \ \mu(B_r(z))>0, \ \forall r>0\big\}.
\end{equation*}
By definition, ${\supp (\mu)}$ is a closed set  of $\Omega= \R^d  \setminus \{0\}$ and it does not contain zero.  To avoid confusion, its closure in $\R^d$ (which may contain zero) is denoted by   $\overline{\supp (\mu)}^{\R^d}$.  
For any   $a \in \R ^d$,  we denote by $\delta_{a}(z)$ the Dirac measure at $z=a$.  A measure ${\mu_{d}}$ is {\em discrete}  (or \textit{atomic})  if there exists a countable set  $D \subseteq \Omega$  such that 
\begin{equation}\label{def-discrete-meas}
{\mu_{d}}\big(\Omega \setminus D\big)=0.
\end{equation}
 The  support of a discrete measure does not need to be discrete. Here is a standard result that will be needed  (cf. e.g. \cite{Rud87}). 

\begin{lemma}\label{lem:discrete-meas}
If \eqref{def-discrete-meas} holds then 
$
{\mu_d}(z)=\sum_{a \in D} \omega(a) \delta_{a}(z)$ with $\omega(a)=\mu(\{a\})$.
\end{lemma}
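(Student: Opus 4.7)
The proof is a direct consequence of countable additivity, so I would present it compactly. Let me outline the steps.

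First I would check that the proposed weights $\omega(a) = \mu_d(\{a\})$ are well defined and finite: since $\mu_d$ is a Radon measure on $\Omega = \R^d \setminus \{0\}$ and the singleton $\{a\}$ is a compact subset of $\Omega$ (recall $0 \notin \Omega$), we have $\omega(a) < +\infty$ for every $a \in D$. In particular the series $\sum_{a \in D} \omega(a) \delta_a$ defines a nonnegative Borel measure on $\Omega$.

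Next, for an arbitrary Borel set $B \subseteq \Omega$, I would decompose
\[
B = (B \cap D) \sqcup (B \setminus D).
\]
Since $B \setminus D \subseteq \Omega \setminus D$ and $\mu_d(\Omega \setminus D) = 0$ by \eqref{def-discrete-meas}, monotonicity gives $\mu_d(B \setminus D) = 0$, hence $\mu_d(B) = \mu_d(B \cap D)$.

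The set $D \cap B$ is countable, so the equality $B \cap D = \bigsqcup_{a \in D \cap B} \{a\}$ is a countable disjoint union of Borel sets. Countable additivity of $\mu_d$ then gives
\[
\mu_d(B) = \mu_d(B \cap D) = \sum_{a \in D \cap B} \mu_d(\{a\}) = \sum_{a \in D} \omega(a) \delta_a(B),
\]
which is exactly the claimed identity. Since $B$ is an arbitrary Borel set, the two measures coincide. There is no real obstacle here: the argument is entirely soft and relies only on $\sigma$-additivity together with the finiteness of $\mu_d$ on singletons coming from the Radon property.
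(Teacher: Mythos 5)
Your proof is correct and complete. The paper does not actually supply a proof of this lemma---it simply labels it a ``standard result'' and cites Rudin \cite{Rud87}---and the argument you give (finiteness of $\omega(a)$ from the Radon property, then $\mu_d(B)=\mu_d(B\cap D)$ by the null-complement hypothesis, then countable additivity over the singletons of $B\cap D$) is exactly the standard textbook verification that the citation points to. One small observation: the statement as printed in the paper writes $\omega(a)=\mu(\{a\})$ rather than $\mu_d(\{a\})$; this is a typo, and your reading with $\mu_d$ is the intended one.
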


 \section{Standard assumptions and examples}\label{sec:as}


We assume that  $\Levy^{\mu}$ is an operator of the form \eqref{def:levy} for some Radon measure $\mu$ on $ \R^d  \setminus \{0\}$ satisfying
\begin{equation}\label{as:mus}\tag{$\textup{A}_{\mu}$}
{ \mbox{ $\mu\geq0,$}\quad\mbox{$\mu$ is symmetric,} \quad \mbox{and}  \quad \int (|z|^2 \wedge 1) \dd \mu(z)<+\infty. }
\end{equation}
 
\begin{remark}
The  symmetry means that $\mu(B)=\mu(-B)$ for any Borel set. In particular, $\Levy^\mu$ is self-adjoint and  the principal value in \eqref{def:levy} makes sense for $u \in C^2_\textup{b}( \R^d )$ via the formula
\begin{equation*}
\begin{split}
& \Levy^\mu[u](x) =  \lim_{r \rightarrow 0} \int_{|z|>r} \big(u(x+z)-u(x)\big) \dd \mu(z)\\
& = \int_{0<|z| < r_0} \big(u(x+z)-u(x)-z \cdot \nabla u   (x)\big) \dd \mu(z)\\
& \quad +\int_{|z| \geq r_0} \big(u(x+z)-u(x)\big) \dd \mu(z)
\end{split}
\end{equation*}
valid for any $r_0>0$. For general $u \in L^\infty( \R^d )$ we say that $\Levy^\mu[u]=0$ in $\mathcal{D}'( \R^d )$ if 
\begin{equation}\label{eq:DistEqual0}
\int_{ \R^d }u(x)\Levy^\mu[\psi](x)\dd x=0 \quad  \forall \psi\in C_\textup{c}^\infty( \R^d ).
\end{equation}
\end{remark}

\begin{remark}
\label{rem:decom}
By the Lebesgue decomposition theorem (cf. e.g. \cite{Rud87}), $\mu$ can be written in a unique way as 
$$
\dd \mu(z)=f(z) \dd z+\dd \mu_s(z)+\dd \mu_d(z),
$$
for some
\begin{enumerate}[{\rm (a)}]
\item Borel measurable and locally integrable (on $\R^d \setminus \{0\}$) function $f \geq 0$,
\item measure $\mu_s \geq 0$ satisyfing 
$$
\begin{cases}
\mu_s(\cdot \cap N)= \mu_s (\cdot) \text{ for some $N$ of null Lebesgue measure,}\\
\mu(\{z\})=0 \text{ for all $z$},
\end{cases}
$$
\item and discrete or atomic  measure $\mu_d \geq 0$. 
\end{enumerate}
The measure $f(z) \dd z$ is the {\em absolutely continuous} part, and $\mu_s$ the   {\em nonatomic}  or {\em diffuse}  singular  part. All the three parts satisfy \eqref{as:mus}. This means that  
$$
f(z)=f(-z) \quad \mbox{and} \quad \int (|z|^2 \wedge 1) f(z) \dd z<+\infty,
$$
and 
$$
\mu_d(z)=\sum_{a \in D} \omega({a}) \big(\delta_a(z)+\delta_{-a}(z)\big)
$$
for some countable set $D \subseteq \R^d \setminus \{0\}$ and   $\omega:D \to \R$  such that
$$
{ \omega>0\quad\mbox{and}}\quad\sum_{a \in D} (|a|^2 \wedge 1) \omega(a) <+\infty.
$$
To identify the discrete measure, we have used Lemma \ref{lem:discrete-meas} with a rewritting of the formula taking into account the symmetry.
\end{remark}

\begin{example}
Here are some classical nonlocal L\'evy operators:
\begin{enumerate}[{\rm (a)}]
\item The fractional Laplacian $-(-\Delta)^\frac{\alpha}{2}$ with $\alpha \in (0,2)$ corresponding to an absolutely continuous L\'evy measure  (cf. e.g. \cite{Sat99}) of the form 
$$ 
\dd\mu(z)=\frac{c_{d,\alpha}}{|z|^{d+\alpha}} \dd z;
$$
\item the relativistic Schr\"odinger operator $m^\alpha I-(m^2I-\Delta)^\frac{\alpha}{2}$ with $m>0$ and $\alpha\in(0,2)$ corresponding to an absolutely continuous L\'evy measure of the form
$$
{\dd \mu(z)=c_{d,\alpha,m}\frac{K_{(d+\alpha)/2}(m|z|)}{|z|^{(d+\alpha)/2}}\dd z,} 
$$
where $K$ is a modified Bessel function (cf. \cite[Remark 7.3]{FaFe15}); 
\item the convolution operator $u \mapsto J \ast u-u$ with   $J \geq 0$, $J(z)=J(-z)$, and $\int J(z) \dd z=1$, corresponding to the absolutely continuous L\'evy measure
$
\dd \mu(z)=J(z) \dd z
$   (cf. e.g. \cite{Cov08,A-VMaRoT-M10}) ;
\item the $1$--$d$ discrete Laplacian 
$$
  \Delta_h u(x) := \frac{u(x+h)+u(x-h)-2u(x)}{h^2}  
$$
corresponding to the atomic L\'evy measure
$
\mu(z)=\frac{1}{  h^2 } \left(\delta_h(z)+\delta_{-h}(z)\right);
$
\item
the general multi--$d$ discrete and self-adjoint diffusion operator   (cf. \cite{DTEnJa18a,DTEnJa18b})
$$
\Levy_h [u](x):= \sum_{a \in D} \omega(a) \big(u(x+a)-u(x-a)-2 u(x) \big)
$$ 
 for some $D$ and $\omega(\cdot)$ as in Remark \ref{rem:decom}, corresponding to the atomic L\'evy measure
$
\mu(z)=\sum_{a \in D}   \omega(a)  (\delta_a(z)+\delta_{-a}(z));
$
\item or the mean value operator   associated to the Laplacian 
$$
\mathcal{M}[u](x):=\frac{1}{c_d} \int_{\partial B_1(0)} \big(u(x+z)-u(x)\big) \dd \sigma(z), 
$$
where $\dd \sigma(z)$ is the surface
measure of the unit sphere and $c_d$ its area. 
\end{enumerate}
\end{example}

%
%

 \section{Characterization in $\R$ in terms of irrational numbers}

 This section is devoted to the dimension $d=1$. The main results Theorem \ref{thm:main} and Corollary \ref{item:method} are given in Section \ref{sec:mr1d} and their proofs in Sections \ref{sec:NecCond} and \ref{sec:SufCond}.


\label{sec:mainresults}

\subsection{Reminders on integers and irrational numbers}

For any $n,m \in \Z \setminus\{0\}$, we let  $\gcd(n,m)$ denote the greatest common  positive divisor between $n$ and $m$.  The integers $n$ and $m$ are {\it coprimes} if $\gcd(n,m)=1$.  Let us also precise that we use the notation $\N_+:=\N \setminus \{0\}$.  

\medskip

Let us recall the B\'ezout identity that will be needed later (cf. e.g. \cite{God87}).

\begin{lemma}\label{Bezout}
Two integers $n,m \in \Z \setminus \{0\}$ are coprimes if and only if $n \Z+m\Z=\Z$.
\end{lemma}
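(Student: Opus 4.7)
The plan is to exploit the standard structure theorem for subgroups of $\Z$, namely that every such subgroup is cyclic of the form $d\Z$ for some $d \in \N$. Both implications will then follow quickly.

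First I would observe that $n\Z + m\Z = \{un + vm : u,v \in \Z\}$ is closed under addition and inversion, and contains $0$, so it is an additive subgroup of $\Z$. By the classification of subgroups of $\Z$ (proved by applying Euclidean division to the smallest positive element), there exists a unique $d \in \N$ such that $n\Z + m\Z = d\Z$. Since $n = 1 \cdot n + 0 \cdot m \in d\Z$ and similarly $m \in d\Z$, the integer $d$ is a common positive divisor of $n$ and $m$; conversely, any common divisor of $n$ and $m$ divides every element $un + vm$ of $d\Z$, and in particular divides $d$ itself. This identifies $d = \gcd(n,m)$.

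With this identification the lemma is immediate. For the direct implication, assume $\gcd(n,m) = 1$; then $n\Z + m\Z = 1 \cdot \Z = \Z$. For the converse, assume $n\Z + m\Z = \Z$; then $d = 1$ by uniqueness of the generator, so $\gcd(n,m) = 1$.

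I expect no real obstacle here: the only nontrivial ingredient is the classification of subgroups of $\Z$, which is a classical consequence of Euclidean division and can either be invoked as a well-known fact or proved in one line by taking $d := \min\{k > 0 : k \in n\Z + m\Z\}$ and dividing an arbitrary element of the subgroup by $d$ with remainder. Given the elementary nature of the statement, I would keep the proof to a few lines and simply cite a standard reference such as \cite{God87} for the subgroup classification.
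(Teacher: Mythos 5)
Your proof is correct and is the standard argument one would find in the cited reference: the paper itself gives no proof of Lemma~\ref{Bezout} and simply refers to \cite{God87}, and your route via the classification of subgroups of $\Z$ (every subgroup is $d\Z$, and the generator of $n\Z+m\Z$ is $\gcd(n,m)$) is precisely the classical one.
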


Here is another basic result that will be needed (cf. e.g. \cite{God87}).

\begin{lemma}\label{lem:dense}
For any $a,b \in \R \setminus \{0\}$, $\overline{a \Z+b \Z}=\R$ if and only if $\frac{b}{a} \not\in \Q$. 
\end{lemma}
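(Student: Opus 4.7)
The plan is to prove each direction separately, with the key structural fact being the classification of closed subgroups of $\R$: every subgroup of $\R$ is either of the form $\alpha\Z$ for some $\alpha\geq 0$, or else dense in $\R$. This will reduce the claim to a computational check on generators.

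For the direction $\overline{a\Z+b\Z}=\R \Rightarrow \frac{b}{a}\notin\Q$, I would argue by contrapositive. Suppose $\frac{b}{a}=\frac{p}{q}$ with $p\in\Z\setminus\{0\}$, $q\in\N_+$, and $\gcd(|p|,q)=1$. Writing $b=\frac{p}{q}a$ gives
\[
a\Z+b\Z=\frac{a}{q}\bigl(q\Z+p\Z\bigr)=\frac{a}{q}\Z,
\]
where the last equality comes from Lemma \ref{Bezout}. This is already closed and discrete, hence not dense in $\R$, which contradicts the hypothesis.

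For the converse, $\frac{b}{a}\notin\Q \Rightarrow \overline{a\Z+b\Z}=\R$, I would invoke the classification of subgroups of $\R$: since $G:=a\Z+b\Z$ is a nontrivial additive subgroup of $\R$, either $G$ is dense or there exists $\alpha>0$ with $G=\alpha\Z$. In the latter case, both $a,b\in\alpha\Z$, so $a=m\alpha$ and $b=n\alpha$ for some $m,n\in\Z\setminus\{0\}$, whence $\frac{b}{a}=\frac{n}{m}\in\Q$, a contradiction. Hence $G$ is dense.

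The main obstacle is justifying the dichotomy for subgroups of $\R$, which is the only nontrivial ingredient. Rather than quote it, the cleanest self-contained proof runs as follows: let $\alpha:=\inf\{g\in G:g>0\}$. If $\alpha>0$ and is attained, a standard Euclidean-division argument gives $G=\alpha\Z$; if $\alpha=0$, then for each $\varepsilon>0$ there is $g\in G$ with $0<g<\varepsilon$, and the multiples of $g$ approximate every real number to within $\varepsilon$, proving density. The case where $\alpha>0$ is not attained is ruled out because $\alpha/2<g_1<g_2<\alpha$ in $G$ would yield $0<g_2-g_1<\alpha$, contradicting the definition of $\alpha$. Since this dichotomy is a standard fact referenced in \cite{God87}, I would simply cite it in the final write-up and keep the proof to the two short paragraphs above.
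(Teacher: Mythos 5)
Your proof is correct and takes a genuinely different route from the paper's. For the ``if'' direction, the paper does not quote the subgroup dichotomy; instead, in Appendix~\ref{proof:density} it introduces the auxiliary set $G^*=\{nb-\lfloor n\frac{b}{a}\rfloor a : n\in\N_+\}\subseteq (a\Z+b\Z)\cap(0,a)$, proves by a rather delicate contradiction argument (Lemma~\ref{lem:DistanceArbitrarySmall}) that $\inf G^*=0$ whenever $\frac{b}{a}\notin\Q$, and then approximates an arbitrary real by integer multiples of a small element of $G^*$. You instead invoke the classification ``every nontrivial subgroup of $\R$ is dense or equal to $\alpha\Z$'' and observe that the discrete alternative forces $\frac{b}{a}\in\Q$. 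The paper does state exactly this dichotomy in Remark~\ref{rem:dense-discrete} (as a consequence of the cited Theorem~\ref{decom:opt}), but it deliberately keeps its Appendix-\ref{proof:density} proof of Lemma~\ref{lem:dense} elementary and self-contained, avoiding any appeal to that structure theorem. Your route is shorter and more conceptual; the paper's is longer but avoids citing the classification. Your ``only if'' direction via the B\'ezout computation $a\Z+b\Z=\tfrac{a}{q}(q\Z+p\Z)=\tfrac{a}{q}\Z$ is exactly the same calculation the paper performs in Lemma~\ref{lem:CondMeasureAllPoints}, and the paper itself remarks that this direction follows from Remark~\ref{rem:dense-discrete} (it omits the detail because it is not used in the main results).

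One small slip in your sketch of the dichotomy: when ruling out the case $\alpha:=\inf\{g\in G: g>0\}>0$ not attained, you wrote ``$\alpha/2<g_1<g_2<\alpha$ in $G$''; by definition of $\alpha$ there are no elements of $G$ in $(0,\alpha)$. The intended choice is two distinct $g_1,g_2\in G$ with $\alpha<g_1<g_2<\tfrac{3\alpha}{2}$ (possible since $\alpha$ is an unattained infimum), giving $g_2-g_1\in G$ with $0<g_2-g_1<\tfrac{\alpha}{2}<\alpha$, the desired contradiction. Since you ultimately cite the dichotomy rather than reprove it, this does not affect the correctness of your final argument.
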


Lemma \ref{lem:dense} will be fundamental for our analysis in dimension $d=1$, and for completeness we recall its proof in Appendix \ref{proof:density}. 

\subsection{
An explicit condition on the L\'evy measure}

Along with $\eqref{as:mus}$, the following condition will turn out to be  necessary and sufficient 
for $\Levy^\mu$   in $\R$  to satisfy the Liouville theorem:
\begin{equation}\label{as:LiCo}\tag{A$_{\textup{L}}$}
\text{$\exists a\in  \supp (\mu) $}\quad\text{such that} \quad \supsupp \q(a,b)=+\infty,
\end{equation}
where $\q:  \R^2 \setminus \{(0,0)\} \to \N_+\cup\{+\infty\}$ is defined as
\begin{equation}\label{def:q}
\q(a,b):=
\begin{cases}
 +\infty &  \textup{if $\frac{b}{a}\not\in \Q$,}\\[1ex]
q  &   \textup{if $\frac{b}{a}= \frac{p}{q}\in \Q$ with  $|p|, q \in \N_+$  and $\gcd(p,q)=1$.}
\end{cases}
\end{equation}

Note that: 

\begin{lemma}\label{lem:rw}
Let $\mu$ be a nonnegative Radon measure on $\R \setminus \{0\}$ such that \eqref{as:LiCo} holds. Then $
\supsupp \q(a,b)=+\infty
$ for all $a \in \supp (\mu)$.
\end{lemma}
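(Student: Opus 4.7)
The plan is to argue by contradiction. Suppose there exists some $a \in \supp(\mu)$ such that $\sup_{b \in \supp(\mu)} \q(a,b) = N < +\infty$. I will show that this forces $\sup_{b \in \supp(\mu)} \q(a_0,b)$ to be finite for the particular $a_0 \in \supp(\mu)$ given by the hypothesis \eqref{as:LiCo}, contradicting it.

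First, by the very definition of $\q$, the assumption $\q(a,b) \leq N$ for every $b \in \supp(\mu)$ forces $\frac{b}{a} \in \Q$ for all such $b$; otherwise $\q(a,b) = +\infty$. In particular, since $a_0 \in \supp(\mu)$, we may write
\begin{equation*}
\frac{a_0}{a} = \frac{p_0}{q_0}, \qquad \gcd(|p_0|,q_0)=1, \quad |p_0|,q_0 \in \N_+,
\end{equation*}
with $q_0 \leq N$. For any other $b \in \supp(\mu)$, write $\frac{b}{a} = \frac{p}{q}$ in lowest terms with $q \leq N$. The next step is a direct computation: since $a \neq 0$,
\begin{equation*}
\frac{b}{a_0} = \frac{b/a}{a_0/a} = \frac{p}{q} \cdot \frac{q_0}{p_0} = \frac{p\, q_0}{q\, p_0},
\end{equation*}
which shows $\frac{b}{a_0} \in \Q$. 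Its denominator in lowest terms divides $q\, |p_0|$, so $\q(a_0,b) \leq q\,|p_0| \leq N\,|p_0|$. Taking the supremum over $b \in \supp(\mu)$ yields $\sup_{b \in \supp(\mu)} \q(a_0,b) \leq N\,|p_0| < +\infty$, contradicting \eqref{as:LiCo} for $a_0$.

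There is no real obstacle here; the argument is purely elementary arithmetic once the definition of $\q$ is unfolded. The only thing to be careful about is handling signs correctly (the numerator $p$ may be negative) and noting that $a,a_0 \neq 0$ since $0 \notin \supp(\mu) \subseteq \R \setminus \{0\}$, which is why all the fractions above are well-defined.
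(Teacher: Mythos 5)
Your proof is correct and is, in substance, the paper's argument read in the contrapositive direction: the bound $\q(a_0,b)\le \q(a,b)\,|p_0|$ you derive is exactly the paper's inequality $\q(a,b)\ge \q(a_0,b)/\q(a_0,a)$, since $|p_0|=\q(a_0,a)$ (the numerator of $\frac{a_0}{a}$ in lowest terms is the denominator of $\frac{a}{a_0}$ in lowest terms). Your arithmetic is marginally cleaner because you only invoke the soft fact that the reduced denominator of a quotient of reduced fractions divides the unreduced product $q\,|p_0|$, whereas the paper computes the exact reduced denominator via a gcd identity.
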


\begin{proof}
Assume $\supsupp \q(a_0,b)=+\infty$ for some $a_0$ and take any other fixed $a \in \supp (\mu)$. If $\frac{b}{a} \notin \Q$ for some $b \in \supp (\mu)$, then $\q(a,b)=+\infty$ and the proof is complete. In the other case, 
$$
\frac{b}{a}=\frac{b}{a_0} \frac{a_0}{a}=\frac{p_{b}}{\q(a_0,b)} \frac{\q(a_0,a)}{p_a}
$$
for some integers $p_a,p_b \neq 0$ such that 
$$
\gcd(p_a,\q(a_0,a))=1=\gcd(p_b,\q(a_0,b)). 
$$
Hence
$$
\q(a,b) =  \underbrace{\left(\frac{\q(a_0,b)}{\gcd(\q(a_0,a),\q(a_0,b))}\right)}_{\geq \frac{\q(a_0,b)}{\q(a_0,a)}} \, \, \underbrace{\left(\frac{|p_a|}{\gcd(|p_a|,|p_b|)}\right)}_{\geq 1},
$$
and therefore  since $1 \leq \q(a_0,a)<+\infty$, 
$$
{{\supsupp \q(a,b) \geq \frac{\supsupp \q(a_0,b)}{\q(a_0,a)} =+\infty.}}
$$
The proof is complete.
\end{proof}

It  will 
be    useful to refer to 
 the negated version of \eqref{as:LiCo}:
\begin{equation}\label{as:NoLiCo}\tag{NA$_{\textup{L}}$}
\text{$\forall a\in  \supp (\mu) $,} \quad \supsupp \q(a,b)<+\infty.
\end{equation}

\begin{remark}
\label{rem:rw}
By Lemma \ref{lem:rw}, it suffices that $\supsupp \q(a,b)<+\infty$ for just one $a \in \supp (\mu)$ to deduce that \eqref{as:NoLiCo} is satisfied.
\end{remark}

\begin{remark}\label{rem:SatisfyLiCo}
As concerning \eqref{as:LiCo}, there are two ways of satisfying this assumption:
\begin{enumerate}[\rm (i)]
\item \label{rem:SatisfyLiCo(i)} Either the ratio $\frac{b}{a}$ is irrational for some $a,b\in{\supp (\mu)}$, or
\item \label{rem:SatisfyLiCo(ii)} there are $a\in{\supp (\mu)}$ and $\{b_{ n }\}_{{ n }} \subseteq { \supp (\mu)}$ such that $q_{ n } \to+\infty$ as ${ n }\to +\infty$ where $\frac{b_{ n }}{a}=\frac{p_{ n }}{q_{ n }}$ for ${ |p_n| },q_{ n }\in \N_+$ with $\gcd ({ p_n },q_{ n })=1$.
\end{enumerate}
\end{remark}

\begin{remark}
Later, we will see that \eqref{as:LiCo} (resp. \eqref{as:NoLiCo}) is a necessary and sufficient condition on $\supp (\mu)$ to generate a dense (resp. discrete) subgroup of $\R$ (cf. Remark \ref{rem:c1}\eqref{item:c1}--\eqref{item:c2}). 
\end{remark}

\subsection{Main results and examples}\label{sec:mr1d}

Our first main result in $\R$  is the following:
\begin{theorem}\label{thm:main}
Assume  $d=1$,  \eqref{as:mus}, and let $\Levy^\mu$ be defined by \eqref{def:levy}. The following statements are equivalent:
\begin{enumerate}[{\rm (a)}]
\item \label{label:main1} If $u\in L^\infty(\R)$ satisfies $\Levy^\mu[u]=0$ in $\mathcal{D}'(\R)$, then $u$ is constant.
\item \label{label:main2}$\mu$ satisfies the property \eqref{as:LiCo}.
\end{enumerate}
\end{theorem}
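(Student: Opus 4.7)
My overall strategy is to reformulate the Liouville property as a density statement about the additive subgroup $G(\supp(\mu))\subseteq\R$ generated by $\supp(\mu)$, and then to exploit Lemma~\ref{lem:dense} together with the B\'ezout identity (Lemma~\ref{Bezout}) to verify that this density is equivalent to~\eqref{as:LiCo}.

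For the easy direction \eqref{label:main1}$\Rightarrow$\eqref{label:main2}, I argue by contrapositive. Assume \eqref{as:NoLiCo}. If $\supp(\mu)=\emptyset$, then $\Levy^\mu\equiv 0$ and any bounded nonconstant function is a solution. Otherwise, pick any $a\in\supp(\mu)$; by \eqref{as:NoLiCo}, $Q:=\supsupp\q(a,b)<+\infty$. Setting $N:=\mathrm{lcm}(1,2,\ldots,Q)$, every $b\in\supp(\mu)$ satisfies $b=ap_b/q_b$ with $q_b\mid N$, so $b\in (a/N)\Z$. Writing $T:=a/N>0$ we thus have $\supp(\mu)\subseteq T\Z$. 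The function $u(x):=\cos(2\pi x/T)$ is smooth, bounded, nonconstant and $T$-periodic, so $u(x+z)-u(x)=0$ for $\mu$-a.e.~$z$. A routine check using the symmetry of $\mu$ and the splitting of the principal value in \eqref{def:levy} yields $\Levy^\mu[u]\equiv 0$ pointwise, contradicting the Liouville property.

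For the converse \eqref{label:main2}$\Rightarrow$\eqref{label:main1}, assume \eqref{as:LiCo} and let $u\in L^\infty(\R)$ satisfy $\Levy^\mu[u]=0$ in $\mathcal{D}'(\R)$. I would regularize by mollification: $u_\varepsilon:=\rho_\varepsilon\ast u\in C_\textup{b}^\infty(\R)$. Translation invariance of $\Levy^\mu$ combined with \eqref{eq:DistEqual0} and self-adjointness yields $\Levy^\mu[u_\varepsilon]=0$ pointwise. The heart of the argument is a \emph{periodicity claim}: $u_\varepsilon(x+g)=u_\varepsilon(x)$ for every $x\in\R$ and every $g\in G(\supp(\mu))$. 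Granting this claim, \eqref{as:LiCo} forces $G(\supp(\mu))$ to be dense in $\R$: in case~\eqref{rem:SatisfyLiCo(i)} of Remark~\ref{rem:SatisfyLiCo} via Lemma~\ref{lem:dense} applied to an irrational ratio, and in case~\eqref{rem:SatisfyLiCo(ii)} because Lemma~\ref{Bezout} gives $G(\supp(\mu))\supseteq a\Z+b_n\Z=(a/q_n)\Z$ with $a/q_n\to 0$. Continuity of $u_\varepsilon$ then forces $u_\varepsilon$ to be constant, and letting $\varepsilon\to 0$ concludes that $u$ is constant a.e.

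The main obstacle is the periodicity claim for $u_\varepsilon$, which I would establish by propagation of maximum in the spirit of \cite{Cov08,Cio12,ShHa16,DTEnJa17b,DTEnJa17a,BrCoHaVa17,BrCo18}. Using the symmetry of $\mu$,
\begin{equation*}
\Levy^\mu[u_\varepsilon](x_0)=\tfrac{1}{2}\int\big(u_\varepsilon(x_0+z)+u_\varepsilon(x_0-z)-2u_\varepsilon(x_0)\big)\dd\mu(z);
\end{equation*}
if $u_\varepsilon$ attains its supremum at $x_0$ the integrand is $\leq 0$ but integrates to zero, forcing $u_\varepsilon(x_0\pm z)=u_\varepsilon(x_0)$ on $\supp(\mu)$ and, by iteration with the group law, on $x_0+G(\supp(\mu))$. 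The delicate point is that $u_\varepsilon$ need not attain its supremum on $\R$; I would bypass this by translating along an approximate maximizer $\{x_n\}$ and, via Arzel\`a-Ascoli (valid since $u_\varepsilon\in C_\textup{b}^\infty$), extracting a locally uniform limit $v$ with $v(0)=\sup u_\varepsilon=\sup v$ and $\Levy^\mu[v]=0$. The attained-maximum argument applied to $v$, together with density of $G(\supp(\mu))$, yields $v\equiv\sup u_\varepsilon$, so $u_\varepsilon(x+x_n)\to\sup u_\varepsilon$ locally uniformly in $x$. The same reasoning applied to $-u_\varepsilon$ controls the infimum; combining both with the density of $G(\supp(\mu))$ should pin the oscillation of $u_\varepsilon$ to zero. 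Executing this last transfer from the translated limit $v$ back to $u_\varepsilon$ itself — i.e.\ genuinely upgrading approximate periodicity into pointwise periodicity — is where I expect the bulk of the technical work to lie.
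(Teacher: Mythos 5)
Your direction \eqref{label:main1}$\Rightarrow$\eqref{label:main2} is correct and follows essentially the same route as the paper (Lemma~\ref{lem:NoAc0}, Corollary~\ref{cor:atomicMes}, Proposition~\ref{prop:counterExample}); using $\mathrm{lcm}(1,\ldots,Q)$ in place of $\overline q!$ is an inessential variant.

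The converse direction \eqref{label:main2}$\Rightarrow$\eqref{label:main1} has a genuine gap — and, to your credit, you identify it yourself in your last sentence. The Arzel\`a--Ascoli/translation argument produces a locally uniform limit $v$ of $u_\varepsilon(\cdot+x_n)$ with $v\equiv\sup u_\varepsilon$ on $\R$, which says that $u_\varepsilon$ is near its supremum on arbitrarily large balls centered at $x_n$. But $x_n$ may drift to infinity, so this does \emph{not} directly say anything about the values of $u_\varepsilon$ elsewhere; running the same argument for $-u_\varepsilon$ gives a second, unrelated sequence of balls near the infimum, and there is no contradiction between the two. ``Pinning the oscillation of $u_\varepsilon$ to zero'' from these two pieces of information alone does not work: any bounded function oscillating arbitrarily slowly between two values already exhibits both phenomena.

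The missing idea is to replace ``$u_\varepsilon$ has small oscillation'' by a derived quantity whose constant positive value along the translated limit visibly contradicts boundedness. The paper does this in two (equivalent) ways. In the proof of Theorem~\ref{thm:main} it differentiates: one applies the translated-limit propagation argument (Lemma~\ref{lem:cov}) to $w:=\partial_x u_\varepsilon$, which also solves $\Levy^\mu[w]=0$; if $M:=\sup w>0$, then for a fixed large $n$ one has $w\geq M-\eta$ on $\overline{B}_R(x_n)$, whence $u_\varepsilon(x_n+R)-u_\varepsilon(x_n)\geq R(M-\eta)$, which for $R>2\|u\|_\infty/(M-\eta)$ contradicts $\|u_\varepsilon\|_\infty\leq\|u\|_\infty$. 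In the proof of Theorem~\ref{thm:period} (the route that would make your periodicity claim precise), one instead fixes $g\in G(\supp(\mu))$ and studies $v(x):=u_\varepsilon(x+g)-u_\varepsilon(x)$, which also solves $\Levy^\mu[v]=0$; the translated limit $v_\infty$ equals $M:=\sup v$ on the whole group, and writing $v_\infty(mg)=u_\infty((m+1)g)-u_\infty(mg)=M$ forces $u_\infty(mg)=u_\infty(0)+mM$, so $M=0$ by boundedness. Either trick converts the quantitative information on the translated limit into an unbounded growth of a bounded function — that is the step your sketch is missing, and it is not merely a technical point but a genuine additional idea.
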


The parts \eqref{label:main1} $\Rightarrow$ \eqref{label:main2} and \eqref{label:main2} $\Rightarrow$ \eqref{label:main1} are proved separately in Sections \ref{sec:NecCond} and \ref{sec:SufCond}. 
In practice, we   can check algorithmically  if a $1$--$d$ operator enjoys the Liouville property with the help of the following procedure:

\begin{corollary}[A practical method of verification in $1$--$d$]
\label{item:method} 
Assume \eqref{as:mus} and  $d =1$.  Then:
\begin{enumerate}[{\rm (a)}]
\item\label{rem:method} If $\overline{\supp (\mu)}^\R$ contains an interval or just one accumulation point, then $\Levy^\mu$ satisfies the Liouville theorem (in the sense of Theorem \ref{thm:main}\eqref{label:main1}).
\item\label{arxiv-detail} If  part \eqref{rem:method} does not hold, then:
\begin{enumerate}[{\rm (i)}]
\item\label{item:method-finite} If $\supp (\mu)$ contains only a finite number of points,
$$
\supp (\mu)=\{-a_N,\dots, -a_1,a_1,\dots,a_N\} \quad \mbox{with} \quad 0<a_1<\dots<a_N,
$$
then we have the Liouville property if and only if   $\frac{a_n}{a_1}\not\in \Q$ for some $a_n$;    
\item\label{item:method-infinite} if $\supp (\mu)$ has an infinite number of points, 
$$
\supp (\mu)=\{-a_n,a_n\}_{n \geq 1} \quad \mbox{with} \quad 0<a_1<a_2<\dots,
$$
then we write $\frac{a_n}{a_1}=\frac{p_n}{q_n}$ in irreducible form 
and we have the Liouville property if and only if $\sup_{n \geq 1} q_n=+\infty$.
\end{enumerate}
\end{enumerate}
\end{corollary}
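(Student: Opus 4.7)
The entire corollary is a consequence of Theorem \ref{thm:main}: the Liouville property is equivalent to the condition \eqref{as:LiCo}, so the plan is to translate \eqref{as:LiCo} into each of the three practical alternatives listed. Throughout, I would also make repeated use of Lemma \ref{lem:rw}, which lets me verify \eqref{as:LiCo} by testing the supremum with a \emph{single} convenient choice of $a\in\supp(\mu)$.

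\textbf{Part (a).} I would split this into two subcases corresponding to the two hypotheses. If $\overline{\supp(\mu)}^{\R}$ contains an interval $I$, then since $\overline{\supp(\mu)}^{\R}\setminus\supp(\mu)\subseteq\{0\}$, I can shrink $I$ so that $I\subseteq\supp(\mu)$ and $0\notin I$. Fixing any $a\in\supp(\mu)$, the set $I/a$ is a nondegenerate interval and hence contains an irrational number, giving some $b\in\supp(\mu)$ with $b/a\notin\Q$, i.e.\ Remark~\ref{rem:SatisfyLiCo}\eqref{rem:SatisfyLiCo(i)}. In the second subcase, $\overline{\supp(\mu)}^{\R}$ has an accumulation point $c\in\R$, so (again using $\overline{\supp(\mu)}^{\R}\setminus\supp(\mu)\subseteq\{0\}$) there exists a sequence of distinct points $b_n\in\supp(\mu)$ with $b_n\to c$. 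I would fix any $a\in\supp(\mu)$ and look at the bounded sequence $b_n/a$. Either some $b_n/a$ is irrational, which directly yields $\q(a,b_n)=+\infty$, or each $b_n/a=p_n/q_n$ in irreducible form; in the latter case, since a bounded set contains only finitely many rationals with any fixed denominator bound, the distinctness of the $b_n/a$ forces $q_n\to+\infty$. In both situations \eqref{as:LiCo} is satisfied, and Theorem~\ref{thm:main} gives the Liouville property.

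\textbf{Part (b).} Once (a) fails, $\overline{\supp(\mu)}^{\R}$ contains no interval and no accumulation point, so it is a closed discrete subset of $\R$; in particular $\supp(\mu)$ is discrete and accumulates neither at finite points nor at $0$. Combined with the symmetry $\supp(\mu)=-\supp(\mu)$ from \eqref{as:mus}, the support necessarily has one of the two forms $\{\pm a_1,\dots,\pm a_N\}$ or $\{\pm a_n\}_{n\ge1}$ with $0<a_1<a_2<\cdots$ (in the infinite case the lack of accumulation forces $a_n\to+\infty$). I would then apply Theorem~\ref{thm:main} together with Lemma~\ref{lem:rw} using the choice $a=a_1$, so that \eqref{as:LiCo} becomes
\[
\sup_{n}\q(a_1,a_n)=+\infty.
\]
Using the definition~\eqref{def:q} and the convention that $\q(a_1,a_n)=+\infty$ exactly when $a_n/a_1\notin\Q$, this supremum equals $+\infty$ iff either some $a_n/a_1$ is irrational or the denominators $q_n$ from the irreducible representations $a_n/a_1=p_n/q_n$ are unbounded. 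In the finite case \eqref{item:method-finite} only the first alternative can occur, yielding the stated criterion; in the infinite case \eqref{item:method-infinite} both alternatives are captured by $\sup_{n\ge1}q_n=+\infty$ under the convention above.

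\textbf{Expected obstacle.} The only nonroutine step is the accumulation-point subcase of (a): one must rule out that an infinite bounded sequence of distinct rationals $b_n/a$ can have uniformly bounded denominators. This is an elementary counting argument (rationals $p/q$ with $|p/q|\le M$ and $q\le Q$ form a finite set), but it is the point where the analysis really uses the interplay between discreteness and rationality that drives the whole classification.
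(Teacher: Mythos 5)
Your proof is correct and follows the paper's plan for part (b) — invoking Theorem \ref{thm:main} together with Lemma \ref{lem:rw} and the two alternatives of Remark \ref{rem:SatisfyLiCo} — but for part (a) you take a genuinely different and more elementary route than the paper. The paper proves (a) by invoking Corollary \ref{cor:atomicMes}, which states that under \eqref{as:NoLiCo} the support must lie in $g\Z$ for some $g>0$; since $g\Z$ is discrete and closed, this is incompatible with $\overline{\supp(\mu)}^\R$ having an accumulation point (intervals being a special case), and the contrapositive gives \eqref{as:LiCo}. You instead argue directly: for the interval subcase, you find an irrational ratio in $I/a$; for the accumulation-point subcase, you use the elementary counting fact that a bounded set of reals contains only finitely many rationals with denominator $\le Q$, so an infinite sequence of distinct bounded ratios $b_n/a$ must either contain an irrational or have unbounded denominators. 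Both approaches are valid. Yours is more self-contained and avoids the detour through Corollary \ref{cor:atomicMes} (which itself rests on Lemma \ref{lem:NoAc0} and the B\'ezout machinery); the paper's is shorter given that Corollary \ref{cor:atomicMes} is needed elsewhere anyway, and treats both subcases of (a) in a single stroke.

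Two small remarks. Your interval subcase is subsumed by the accumulation-point subcase, so it could be folded in. And in that subcase the counting argument gives $\sup_n q_n = +\infty$ rather than $q_n \to +\infty$ along the whole sequence as written; but $\sup_n q_n = +\infty$ (after passing to a subsequence, if one wants literal divergence) is exactly what \eqref{as:LiCo} and Remark \ref{rem:SatisfyLiCo}\eqref{rem:SatisfyLiCo(ii)} require, so the conclusion stands.
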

 
\begin{proof}
 Part    \eqref{arxiv-detail} is an immediate consequence of Theorem \ref{thm:main}  and  Remarks \ref{rem:rw} and \ref{rem:SatisfyLiCo}. 
For \eqref{rem:method}, we need Corollary \ref{cor:atomicMes} that will be proved later. Corollary \ref{cor:atomicMes} implies that if \eqref{as:NoLiCo} holds then $\supp (\mu) \subseteq g \Z$ for some $g > 0$. But such an inclusion is not possible if $\overline{\supp (\mu)}^\R$ has an accumulation point. This means that necessarily \eqref{as:LiCo} holds and we conclude again by Theorem \ref{thm:main}.
\end{proof}



%
%


\begin{example}[$1$--$d$ continuous L\'evy measures satisfy Liouville]
Any measure with either an absolutely continuous part or a nontrivial  diffusive  singular part (cf. Remark \ref{rem:decom})  satisfies   the Liouville theorem by Corollary \ref{item:method}\eqref{rem:method}. 
Examples are
%
the fractional Laplace measure,
%
%
relativistic Schr\"odinger operators, convolution operators, or even purely Cantor measures.   
\end{example}

\begin{example}[Discrete measures satisfying Liouville]
\begin{enumerate}[{\rm (a)}]
\item  A nice example of a discrete operator satisfying the Liouville theorem is the following nonstandard discretization of the Laplacian 
\begin{equation}\label{ex:ns}
\begin{split}
{{\Levy_h[u]}} & = 
\frac{u(x+h)+u(x-h)-2 u(x)}{2h^2}\\
& \quad +\frac{u(x+\pi h)+u(x-\pi h)-2 u(x)}{2\pi^2 h^2}.
\end{split}
\end{equation}
This follows from \eqref{item:method-finite} of   Corollary \ref{item:method}  because  $\frac{\pi h}{h}\not\in\Q$.  We refer the reader to  Section \ref{sec:exfinitediffMultiD}  for examples in multi-$d$.
\item  Other typical examples  are nonzero order discrete operators  with unbounded measures at $0$  ($\mu(B_1(0))=+\infty$ but $\int (|z|^2 \wedge 1) \dd \mu(z)<+\infty$)  like 
\[
  \mu=\sum_{ n \geq 1 }  \big(\delta_{\frac{1}{n}}+\delta_{-\frac{1}{n}} \big) \quad \Big(\text{here $\int (|z|^2 \wedge 1) \dd \mu(z)=\sum_{n \geq 1} \frac{{{2}}}{n^2}<+\infty$} \Big).
\] 
  The Liouville property  follows from   \eqref{rem:method} of Corollary \ref{item:method}   ($0$ is an accumulation point of $\overline{\supp (\mu)}^\R$).   See Section \ref{sec:exunboundedMultiD} for a similar discussion in multi-$d$.
\item 
A last illustrative example is given by   a discrete measure with infinitely many   rational and   isolated points:   
\begin{equation*}
\mu =\sum_{ n \geq 1 }  \omega_n \big(\delta_{a_n}+\delta_{-a_n}\big)
\end{equation*}
 for  $\{a_n\}_{n \geq 1}$ and $\{\omega_n\}_{n \geq 1}$ satisfying 
\begin{equation*}
\omega_n > 0, \quad \sum_{ n \geq 1 } \omega_n<+\infty, \quad \mbox{and} \quad a_n=\frac{n^2+1}{n}.
\end{equation*} 
Now the Liouville property follows from \eqref{item:method-infinite} of   Corollary \ref{item:method}.   Indeed $\gcd(n^2+1,n)=1$ by
the B\'ezout identity  Lemma \ref{Bezout} 
  and therefore since $\frac{a_n}{a_1}=\frac{n^2+1}{2n}$, we have
$$
\q (a_1=1,a_n) \geq n \to +\infty \quad \mbox{as} \quad n \to +\infty.
$$
 
\end{enumerate}
\end{example}

\begin{example}[Counterexamples for standard finite difference operators]
 By   Corollary \ref{item:method}\eqref{item:method-finite},  the usual discretization of the $1$--$d$ Laplacian  
$$
\Delta_h u(x)=\frac{1}{h^2}\big(u(x+h)+u(x-h)-2u(x)\big)
$$ 
does not have the Liouville theorem, neither  will any other discretization of the form 
$$
{  \Levy_h[u](x)= \sum_{n \geq 1} \omega_{n} \big(u(x+n h)+u(x-n h)-2u(x)\big). }
$$
To preserve the Liouville property for a numerical scheme,   one could  consider nonuniform grids with irrational ratios between nodes as in \eqref{ex:ns}.
\end{example}



\subsection{Proof of the necessary condition}
\label{sec:NecCond}

 Let us now prove that \eqref{label:main1} $\Rightarrow$ \eqref{label:main2}  in Theorem \ref{thm:main}. We  will proceed by the contrapositive statement. This means that if \eqref{as:NoLiCo} holds, we can find a nonconstant function $U\in L^\infty(\R)$ such that $\Levy^\mu[U]=0$ in $\mathcal{D}'(\R)$.  Let us start with elementary results. 


The first lemma shows that  $0 \notin \overline{\supp (\mu)}^\R$. 

\begin{lemma}\label{lem:NoAc0}
Assume  \eqref{as:mus}, $d=1$, and  \eqref{as:NoLiCo}.  Then  either $\mu=0$ (trivial measure) or 
\[
a_{1}:=\inf\left\{b>0 \ : \ b\in { \supp (\mu) } \right\} >0.
\]
\end{lemma}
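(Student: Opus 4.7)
The plan is to argue by contradiction and produce a uniform positive lower bound on the positive part of $\supp(\mu)$ under \eqref{as:NoLiCo}. Suppose that $\mu \neq 0$ yet $a_1 = 0$; then there exists a sequence $\{b_n\}_{n \geq 1} \subseteq \supp(\mu)$ with $b_n > 0$ and $b_n \to 0$. I will derive a contradiction by showing that all such positive elements of $\supp(\mu)$ are bounded below by a positive constant.

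Since $\mu \neq 0$ is symmetric by \eqref{as:mus}, its support contains at least one positive point; fix any such $a \in \supp(\mu)$ with $a > 0$. Applying \eqref{as:NoLiCo} to this particular $a$ yields a finite bound
\[
Q := \supsupp \q(a,b) < +\infty.
\]
For each $n$ the ratio $\frac{b_n}{a}$ must then be rational (otherwise $\q(a,b_n)=+\infty$), so write $\frac{b_n}{a} = \frac{p_n}{q_n}$ in irreducible form with $|p_n|, q_n \in \N_+$ and $\gcd(|p_n|,q_n)=1$. The definition of $\q$ gives $q_n = \q(a,b_n) \leq Q$.

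The key observation is the following uniform lower bound: because both $a > 0$ and $b_n > 0$, the integer $p_n$ is strictly positive, so $p_n \geq 1$. Therefore
\[
b_n \;=\; \frac{a \, p_n}{q_n} \;\geq\; \frac{a}{q_n} \;\geq\; \frac{a}{Q} \;>\; 0,
\]
which contradicts $b_n \to 0$. Hence $a_1 \geq a/Q > 0$, as claimed.

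I do not anticipate any real obstacle. The delicate point, such as it is, lies in ensuring $p_n \geq 1$: this uses that the irreducible representation of a positive rational has a positive numerator once the denominator is chosen positive. The symmetry of $\mu$ enters only to guarantee that $\supp(\mu)$ contains a positive element to which \eqref{as:NoLiCo} can be applied.
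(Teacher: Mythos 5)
Your proof is correct and follows essentially the same strategy as the paper's: both argue by contradiction, pick a positive $a\in\supp(\mu)$, use \eqref{as:NoLiCo} to force the ratios $b_n/a$ to be rational with $p_n\geq 1$, and exploit the identity $b_n q_n = a\,p_n$. The only cosmetic difference is that the paper deduces $q_n \geq a/b_n \to +\infty$ to contradict the finiteness of $\supsupp \q(a,b)$, whereas you first invoke the finite bound $Q$ on $q_n$ and then deduce $b_n \geq a/Q$, contradicting $b_n\to 0$; these are the same computation read in opposite directions.
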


\begin{proof}
Assume by contradiction that  $\mu$ is not the trivial measure  and $a_1=0$. Then, by definition there exists a sequence $\{b_n\}_{ n}\subseteq { \supp (\mu) }\cap (0,+\infty)$ such that $b_n \rightarrow 0$ as $n\to+ \infty$. Now take any $a\in { \supp (\mu) }\cap (0,+\infty)$. If $\frac{b_{n_0}}{a}\not\in \Q$ for some  $n_0 \in \N_+$  then $\q(a,b_{n_0})=+\infty$ which contradicts  \eqref{as:NoLiCo}.  Otherwise, let $q_n\in \N_+$ be such that $\frac{b_n}{a}=\frac{p_n}{q_n}$ with $p_n \in \N_+$ and $\gcd(p_n,q_n)=1$, and therefore
\[
\q(a,b_n)=q_n.
\]
Then, since $p_n\geq1$ and $b_n \rightarrow 0$, we have
\[
 \supsupp  \q(a,b)\geq \q(a,b_n)=q_n= \frac{a\, p_n }{b_n}\geq \frac{a}{b_n}\stackrel{n \to \infty}{\longrightarrow}+\infty,
\]
and again we reach a contradiction with  \eqref{as:NoLiCo}.  
%
%
\end{proof}

\begin{corollary}
\label{cor:atomicMes}  
  Assume \eqref{as:mus}, $d=1$, and \eqref{as:NoLiCo}. Then  there is $g > 0$ such that 
$\supp (\mu) \subseteq g \Z$. 
\end{corollary}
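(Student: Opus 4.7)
The plan is to extract a uniform denominator bound from \eqref{as:NoLiCo} and use elementary divisibility. First I would dispose of the trivial case $\mu = 0$, for which $\supp(\mu) = \emptyset \subseteq g\Z$ for any $g>0$. Otherwise, by Lemma \ref{lem:NoAc0} the number $a_1 := \inf\{b > 0 : b \in \supp(\mu)\}$ is strictly positive, and since $\supp(\mu)$ is closed in $\R\setminus\{0\}$ and $a_1 > 0$, we have $a_1 \in \supp(\mu)$. I will just fix some $a := a_1 \in \supp(\mu) \cap (0,+\infty)$.

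Next, I would invoke \eqref{as:NoLiCo} for this particular $a$ to produce a finite integer
$$Q := \supsupp \q(a,b) \in \N_+.$$
By definition of $\q$, every $b \in \supp(\mu)$ satisfies $b/a \in \Q$, and writing $b/a = p/q$ in irreducible form with $p \in \Z\setminus\{0\}$ and $q \in \N_+$, one has $q = \q(a,b) \leq Q$. (Negative elements of $\supp(\mu)$ are handled automatically by the symmetry of $\mu$, which implies symmetry of $\supp(\mu)$, together with $\q(a,-b)=\q(a,b)$.)

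Finally, I would pick any common multiple of $1,2,\ldots,Q$ — for definiteness $N := Q!$ — and set $g := a/N > 0$. Since $q \mid N$ for every denominator appearing above, $N/q \in \Z$, so
$$b = \frac{p}{q}\,a = \bigl(p\,N/q\bigr)\cdot\frac{a}{N} \in g\,\Z$$
for every $b \in \supp(\mu)$. This gives $\supp(\mu)\subseteq g\Z$ as claimed. There is no real obstacle here: the whole argument is just the remark that a uniform bound on denominators forces all rationals $p/q$ to land in a single cyclic group, and the nontrivial input is entirely packaged in Lemma \ref{lem:NoAc0} (to get a legitimate base point $a>0$) and the finiteness $Q<+\infty$ coming from \eqref{as:NoLiCo}.
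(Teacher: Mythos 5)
Your proof is correct and follows essentially the same approach as the paper's: take $a_1>0$ from Lemma \ref{lem:NoAc0}, use \eqref{as:NoLiCo} to get a uniform bound $Q$ on denominators $\q(a_1,b)$, and set $g=a_1/Q!$. The only (minor and harmless) extras are your explicit observation that $a_1\in\supp(\mu)$ (which the paper leaves implicit when invoking \eqref{as:NoLiCo} at $a_1$) and the remark about negative elements of the support.
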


\begin{proof}
 If $\mu=0$ then $\supp(\mu)=\emptyset$ and the result is trivial.  In the other case,   let $a_1>0$ be given by Lemma \ref{lem:NoAc0}. By \eqref{as:NoLiCo} 
$$
\supsupp \q (a_1, b)<+\infty,
$$
where  $\q(a_1,b) \in \N_+$ for any $b \in \supp (\mu)$ by the definition \eqref{def:q}. This supremum is therefore a maximum and there exists $\overline{q} \in \N_+$ such that 
\[
\overline{q}=\max_{b \in \supp (\mu)} \q (a_1, b).
\]
For any $b \in \supp (\mu)$, $\frac{b}{a_1} \in \Q$ by \eqref{as:NoLiCo} and there exists an integer $p_b$ such that 
$$
b=a_1 \frac{b}{a_1}=a_1 \frac{p_b}{\q(a_1,b)}=\frac{a_1}{\overline{q}!} \frac{\overline{q}! \, p_b}{\q(a_1,b)}.
$$ 
Note that $\q(a_1,b)$ divides $\overline{q} !$ by the above definition of $\overline{q}$. Defining $g:=\frac{a_1}{\overline{q}!}>0$ we conclude that $b \in g \Z$.  Since $b$ was arbitrary  in $\supp (\mu)$ and the choice of $g$ did not depend on $b$, the proof is complete.
\end{proof}


The following proposition completes the proof of \eqref{label:main1} $\Rightarrow$ \eqref{label:main2} in Theorem \ref{thm:main}. 

\begin{proposition}
\label{prop:counterExample}
Assume \eqref{as:mus},  $d=1$,  and \eqref{as:NoLiCo}. Then there exists a nonconstant function $U\in C^\infty_\textup{b}(\R)$ such that  $\Levy^\mu[U]= 0$ in $\R$. 
\end{proposition}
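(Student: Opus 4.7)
The plan is to exploit Corollary \ref{cor:atomicMes} in the simplest possible way. That corollary tells us that under \eqref{as:NoLiCo} (and in the nontrivial case $\mu\neq 0$), there exists $g>0$ such that $\supp(\mu)\subseteq g\Z$. So I would look for a candidate $U$ that is $g$-periodic, smooth, bounded, and nonconstant. The simplest is
\[
U(x):=\sin\!\left(\frac{2\pi x}{g}\right),
\]
which clearly lies in $C^\infty_\textup{b}(\R)$, is nonconstant, and satisfies $U(x+ng)=U(x)$ for every $n\in\Z$.

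Next, I would verify pointwise that $\Levy^\mu[U](x)=0$ using the symmetric formula in the first Remark of Section \ref{sec:as}. I would pick any $r_0\in(0,g)$. Since $\supp(\mu)\subseteq g\Z$ and $0\notin\supp(\mu)$, we have $\supp(\mu)\cap\{0<|z|<r_0\}=\emptyset$, so the singular integral
\[
\int_{0<|z|<r_0}\bigl(U(x+z)-U(x)-z\,U'(x)\bigr)\dd\mu(z)=0.
\]
For the remaining integral, every $z\in\supp(\mu)\cap\{|z|\geq r_0\}$ is of the form $z=ng$ with $n\in\Z\setminus\{0\}$, hence $U(x+z)-U(x)=0$ by the $g$-periodicity of $U$. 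Consequently
\[
\int_{|z|\geq r_0}\bigl(U(x+z)-U(x)\bigr)\dd\mu(z)=0,
\]
and adding the two pieces gives $\Levy^\mu[U](x)=0$ for every $x\in\R$. In particular $\Levy^\mu[U]=0$ in $\mathcal D'(\R)$ through \eqref{eq:DistEqual0}, since the pointwise formula shows that $x\mapsto \Levy^\mu[U](x)$ is identically zero on $\R$.

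The degenerate case $\mu=0$ is immediate: any nonconstant smooth bounded function, e.g.\ $U(x)=\sin x$, satisfies $\Levy^\mu[U]\equiv 0$ trivially. The only step that requires any care is confirming that both halves of the principal value decomposition vanish \emph{independently}, and this is what forces the choice $r_0<g$ so that the set $\supp(\mu)\cap\{0<|z|<r_0\}$ is empty — thereby killing the compensation term $z\,U'(x)$ without needing to exploit any cancellation. No obstacle beyond this bookkeeping is expected; the real work has already been carried out inside Corollary \ref{cor:atomicMes}, which reduces the problem of constructing a bounded nonconstant solution to producing a periodic one with the right period.
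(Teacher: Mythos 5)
Your proof is correct and follows essentially the same route as the paper: invoke Corollary \ref{cor:atomicMes} to get $\supp(\mu)\subseteq g\Z$, then observe that any smooth $g$-periodic function solves $\Levy^\mu[U]=0$. The only cosmetic difference is in the verification step — the paper rewrites $\Levy^\mu$ explicitly as a discrete series via Lemma \ref{lem:discrete-meas} and Remark \ref{rem:decom} (and uses $U(x)=\cos(2\pi x/g)$), whereas you split the principal value at $r_0<g$ and note that the compensated part has empty intersection with the support; both hinge on exactly the same two facts.
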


\begin{proof}
By Corollary \ref{cor:atomicMes},  there exists $g>0$ such that $\supp (\mu) \subseteq g \Z$ and   thus  $\mu$ is a discrete measure. By Lemma \ref{lem:discrete-meas} and Remark \ref{rem:decom},  
$$
\Levy^\mu[U](x)=\sum_{n \geq 1} \omega_n \big(U(x+n g)+U(x-ng)-2 U(x)\big),
$$
with $\omega_n \geq 0$ satisfying $\sum_{n \geq 1} \omega_n<+\infty$. Hence, every $g$-periodic $U$ satisfy $\Levy^\mu[U]= 0$ like e.g. $U(x)=\cos (\frac{2 \pi}{g}x)$. 

\end{proof}
%
%

\subsection{Proof of the sufficient condition}
\label{sec:SufCond}


 We will now  show the Liouville theorem given by the implication \eqref{label:main2} $\Rightarrow$ \eqref{label:main1} in Theorem \ref{thm:main}. Roughly speaking, we will use techniques from \cite{Cov08,Cio12,BrCoHaVa17} (cf. Theorem \ref{prop:Ciomaga} and Lemma \ref{lem:cov}) to deduce that $u$ is constant on some abstract set, that we will rigorously identify to be the whole domain $\R$ under \eqref{as:LiCo}. Later, we will ameliorate the techniques of \cite{Cov08,Cio12,BrCoHaVa17} to get a more general periodicity result valid for any dimension $d \geq 1$ (cf. Theorem \ref{thm:period}). This general result will imply the multi--$d$ Liouville theorem as a byproduct (cf. Theorem \ref{thm:main-d}). But let us first use more standard arguments and proceed in a serie of representative examples to shed light on the main ideas. 




\subsubsection*{Example 1: Only two points 
with irrational ratio}

Let us  consider a prototypical operator satisfying \eqref{as:LiCo}  as in Remark  \ref{rem:SatisfyLiCo}\eqref{rem:SatisfyLiCo(i)}.  
 Let 
\begin{equation}\label{eq:ex1}
\mu=    \omega_1  \big(\delta_a+\delta_{-a}\big)+   \omega_2  \big(\delta_b+\delta_{-b}\big) \quad \mbox{for} \quad a,b,\omega_1,\omega_2>0 \quad \mbox{with} \quad \frac{b}{a} \not\in\Q.
\end{equation}
Here ${\supp (\mu)}=\{-b,-a,a,b\}$ since $a \neq b$. 

\begin{proposition}\label{thm:ex1}
Assume $\mu$ is given by \eqref{eq:ex1} and  $u\in C_\textup{b}^{{ \infty}}(\R)$  solves  $\Levy^\mu[u]= 0$ in $\R$. 
If $u$ has a global maximum then $u$ is constant.
\end{proposition}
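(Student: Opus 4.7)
The plan is to combine a propagation-of-maximum argument at the global maximum point with the density lemma (Lemma \ref{lem:dense}).

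First I would write the equation $\Levy^\mu[u] = 0$ explicitly for the measure \eqref{eq:ex1}, namely
\begin{equation*}
\omega_1 \bigl( u(x+a) + u(x-a) - 2u(x)\bigr) + \omega_2 \bigl( u(x+b) + u(x-b) - 2u(x)\bigr) = 0
\end{equation*}
for every $x \in \R$. Let $x_0 \in \R$ be a point where $u$ attains its global maximum $M := u(x_0)$. Evaluating the equation at $x_0$, each of the four quantities $u(x_0 \pm a) - M$ and $u(x_0 \pm b) - M$ is nonpositive, while their weighted sum with strictly positive coefficients $\omega_1, \omega_2 > 0$ equals zero. Hence all four quantities vanish, which means $u(x_0 + a) = u(x_0 - a) = u(x_0 + b) = u(x_0 - b) = M$. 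In particular, $x_0 \pm a$ and $x_0 \pm b$ are also global maximum points.

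Next I would iterate this propagation: by induction on $|n| + |m|$, every point of the form $x_0 + n a + m b$ with $n, m \in \Z$ is a global maximum point, so
\begin{equation*}
u(x_0 + n a + m b) = M \quad \text{for all } n, m \in \Z.
\end{equation*}
Since $b/a \notin \Q$, Lemma \ref{lem:dense} gives $\overline{a\Z + b\Z} = \R$, so the set $\{x_0 + n a + m b : n, m \in \Z\}$ is dense in $\R$. By continuity of $u$ (recall $u \in C_\textup{b}^\infty(\R)$), we conclude $u \equiv M$ on $\R$, i.e., $u$ is constant.

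There is no real obstacle here: the only substantive ingredients are the nonnegativity of the weights (to force each nonpositive term to vanish), a trivial induction for iteration, and the density statement Lemma \ref{lem:dense}. The only point worth stating carefully is the induction step, where one applies propagation at an already-identified maximum point $x_0 + na + mb$ to produce the new maximum points $x_0 + (n\pm 1)a + mb$ and $x_0 + na + (m\pm 1)b$.
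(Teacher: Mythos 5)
Your proof is correct and follows essentially the same strategy as the paper: propagate the maximum from a global maximum point along $a\Z + b\Z$ by iterating the nonpositivity-forces-equality argument, then use Lemma \ref{lem:dense} and continuity. The only cosmetic difference is that the paper normalizes $x_0 = 0$ without loss of generality.
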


%
%

 Here the proof easily follows from the reasoning in \cite{Cov08,Cio12} and Lemma \ref{lem:dense}. Let us give detail for completeness. 

\begin{proof}[Proof of Proposition \ref{thm:ex1}]
Without loss of generality assume that 
$$
M:=u(0)= { \max  u. } 
$$ 
  First note that 
\begin{equation*}
\begin{split}
0&=\Levy^\mu[u](0)=    \omega_1  \sum_{\pm}(u(\pm a)- M)+   \omega_2  \sum_{\pm}(u(\pm b)- M),
\end{split}
\end{equation*}
and since each term in the sum is nonpositive by definition of $w_1,w_2$ and $M$, we have that $u(\pm a)=u(\pm b)=M$. Then if $u(mb+ka)=M$ for some $m,k\in \Z$, again
\begin{equation*}
\begin{split}
0&=\Levy^\mu[u](mb+ka)\\
&=    \omega_1  \sum_{\pm}(u(mb+(k\pm1)a)- M)+   \omega_2  \sum_{\pm}(u((m\pm1)b+ka)- M),
\end{split}
\end{equation*}
  and again by the nonpositivity of the terms we get
\[
u(mb+(k\pm1)a)
=u((m\pm1)b+ka)
=M.
\]
By induction, we find that
\[
u( x )=M={ \max u } \quad  \forall x \in a \Z + b\Z. 
\]
Since $u$ is continuous  the proof is complete by Lemma \ref{lem:dense}.
%
%
\end{proof}

\subsubsection*{Example 2: Infinitely many points 
with asymptotic irrational ratio}

Let us  consider a prototypical operator satisfying \eqref{as:LiCo} as in Remark \ref{rem:SatisfyLiCo}\eqref{rem:SatisfyLiCo(ii)}.  Let 
\begin{equation}\label{eq:ex2}
\mu(z)=\sum_{n \geq 1} \omega_n \big(\delta_{a_n}(z)+\delta_{-a_n}(z) \big) \quad \mbox{with} \quad \omega_n > 0, \quad \sum_{n \geq 1} (a_n^2 \wedge 1) \omega_n <+\infty,
\end{equation}
as well as 
\begin{equation}\label{eq:ex2-as}
a_n>0 \quad \mbox{and} \quad \sup_{n \geq 1} \q (a_{1},a_{n})=+\infty. 
\end{equation}



\begin{proposition}\label{thm:ex2}
Assume \eqref{eq:ex2}--\eqref{eq:ex2-as} and $u\in C_\textup{b}^{ {\infty}}(\R)$ solves $\Levy^\mu[u]= 0$  in $\R$. If $u$ has a global maximum, then $u$ is constant.
\end{proposition}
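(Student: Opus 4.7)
The strategy is to extend the argument of Proposition \ref{thm:ex1} to infinitely many points: use nonpositivity of each term in $\Levy^\mu[u]$ at a maximum to propagate the maximum along $a_n\Z$ for every $n$, conclude that $u$ attains its maximum on the subgroup $G=\sum_{n\geq 1}a_n\Z$, and finally invoke \eqref{eq:ex2-as} via B\'ezout to show $G$ is dense in $\R$.

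More concretely, I would without loss of generality translate so that $u(0)=M:=\max u$. At $x=0$ the equation reads
\[
0=\Levy^\mu[u](0)=\sum_{n\geq 1}\omega_n\bigl(u(a_n)+u(-a_n)-2M\bigr),
\]
where the series is absolutely summable because $u\in C_\textup{b}^\infty(\R)$ and $\sum_n(a_n^2\wedge 1)\omega_n<+\infty$. Every term is nonpositive since $u\leq M$ and $\omega_n>0$, so each term must vanish, giving $u(\pm a_n)=M$ for all $n$. Iterating this observation at every point $x\in G$ where $u(x)=M$ (the series at $x$ again has nonpositive terms summing to zero), a straightforward induction on the word length in the generators $\{a_n\}$ yields $u\equiv M$ on the subgroup $G:=\sum_{n\geq 1}a_n\Z$.

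It then remains to show $\overline{G}=\R$; continuity of $u$ will complete the proof. I would split according to Remark \ref{rem:SatisfyLiCo}. In case \eqref{rem:SatisfyLiCo(i)}, some $a_n/a_1\notin\Q$, and then $G\supseteq a_1\Z+a_n\Z$ is already dense by Lemma \ref{lem:dense}. In case \eqref{rem:SatisfyLiCo(ii)}, all ratios $a_n/a_1=p_n/q_n$ are rational in irreducible form, but $q_n\to+\infty$ along a subsequence. Then B\'ezout's identity (Lemma \ref{Bezout}) gives
\[
a_1\Z+a_n\Z=\frac{a_1}{q_n}\bigl(q_n\Z+p_n\Z\bigr)=\frac{a_1}{q_n}\Z\subseteq G,
\]
so $G$ contains positive elements $a_1/q_n$ arbitrarily close to $0$. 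Any additive subgroup of $\R$ with this property is dense (otherwise it would be of the form $g\Z$ for some $g>0$, contradicting the existence of elements below $g$), hence $\overline{G}=\R$.

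The only non-routine step is the first one, namely justifying that an infinite sum of nonpositive terms that vanishes forces each term to vanish and that $\Levy^\mu[u]$ can legitimately be evaluated pointwise at the maximum; both points reduce to absolute summability guaranteed by \eqref{as:mus} applied to $u\in C_\textup{b}^\infty(\R)$, exactly as in the representation of $\Levy^\mu$ recalled after \eqref{as:mus}. The density argument via B\'ezout is conceptually the key new ingredient compared to Proposition \ref{thm:ex1}.
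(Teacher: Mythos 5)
Your proposal is correct and follows essentially the same approach as the paper: propagate the maximum by the nonpositivity of each term in $\Levy^\mu[u]$, obtain $u\equiv M$ on the generated subgroup by induction, and then deduce density from \eqref{eq:ex2-as} via B\'ezout and Lemma \ref{lem:dense}. The only cosmetic difference is that you inline a self-contained density argument (elements arbitrarily close to $0$ force density), whereas the paper packages this step as Lemma \ref{lem:CondMeasureAllPoints}, which it also reuses later; the underlying reasoning is identical.
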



 Now we need the density result below to prove Proposition \ref{thm:ex2}. 


\begin{lemma}\label{lem:CondMeasureAllPoints}
Assume \eqref{as:mus}, $d=1$, and \eqref{as:LiCo}. Then  
$$
\overline{\bigcup_{a,b \in \supp (\mu)} (a \Z+b \Z)}=\R. 
$$
\end{lemma}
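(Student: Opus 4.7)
My plan is to split the argument according to the two ways of satisfying \eqref{as:LiCo} listed in Remark~\ref{rem:SatisfyLiCo}, and in each case exhibit an explicit dense subset of the union $\bigcup_{a,b \in \supp(\mu)} (a\Z + b\Z)$.

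First, consider case~\eqref{rem:SatisfyLiCo(i)} of Remark~\ref{rem:SatisfyLiCo}: there exist $a,b \in \supp(\mu)$ with $\frac{b}{a} \notin \Q$. Then $a\Z + b\Z$ is itself one of the sets in the union, and Lemma~\ref{lem:dense} gives $\overline{a\Z + b\Z} = \R$ at once, which already proves the claim.

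The substantive work is in case~\eqref{rem:SatisfyLiCo(ii)}: there is $a \in \supp(\mu)$ and a sequence $\{b_n\} \subseteq \supp(\mu)$ with $\frac{b_n}{a} = \frac{p_n}{q_n}$, $\gcd(p_n,q_n)=1$, and $q_n \to +\infty$. The key observation is that for each fixed $n$ one has the identity
\begin{equation*}
a\Z + b_n\Z \;=\; \frac{a}{q_n}\bigl(q_n\Z + p_n\Z\bigr) \;=\; \frac{a}{q_n}\,\Z,
\end{equation*}
where the second equality uses B\'ezout (Lemma~\ref{Bezout}) together with $\gcd(p_n,q_n)=1$. Thus each term of the union is a discrete subgroup of $\R$ with mesh $\frac{a}{q_n}$, and the hypothesis $q_n \to +\infty$ forces $\frac{a}{q_n} \to 0$. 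Given any $x \in \R$ and $\varepsilon > 0$, choose $n$ so that $\frac{a}{q_n} < \varepsilon$; there exists $k \in \Z$ with $\bigl|x - k\frac{a}{q_n}\bigr| < \varepsilon$, and $k\frac{a}{q_n} \in a\Z + b_n\Z$ lies in the union. This yields density.

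I do not foresee a serious obstacle: the whole point is that \eqref{as:LiCo} was designed precisely to guarantee one of these two situations, and both reduce to elementary number-theoretic facts already recorded in Lemmas~\ref{Bezout} and~\ref{lem:dense}. The only mild care is in the reduction $a\Z + b_n\Z = (a/q_n)\Z$ via B\'ezout, and in noting that density of the union follows from its containing arbitrarily fine arithmetic progressions through $0$.
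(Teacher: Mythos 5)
Your proof is correct and takes essentially the same approach as the paper: split on whether some ratio $\frac{b}{a}$ is irrational (Lemma~\ref{lem:dense} handles that case directly) or whether all ratios are rational with unbounded denominators, and in the latter case use the B\'ezout identity to rewrite $a\Z + b\Z = \frac{a}{\q(a,b)}\Z$, so that \eqref{as:LiCo} forces the mesh to shrink to zero. You simply make explicit the density step that the paper dismisses as ``obvious.''
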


\begin{proof} 
If $\frac{b}{a} \notin \Q$ then the result follows from Lemma \ref{lem:dense}. Assume now that any such ratio is rational. 
 By the B\'ezout identity Lemma \ref{Bezout} and  the definition of $\q(\cdot,\cdot)$ in \eqref{def:q}, we infer that for any $a,b \in \supp (\mu)$ there exists $p \in \Z$  such that   $\frac{b}{a}=\frac{p}{\q (a,b)}$ with $\gcd(p,\q(a,b))=1$, and
\begin{equation*}
\begin{split}
a \Z+b \Z & =a\left(\Z+\frac{b}{a} \Z \right)=a \left(\Z+\frac{p}{\q(a,b)} \Z \right)\\
& =\frac{a}{\q (a,b)} \left(\q(a,b) \Z+p \Z \right)=\frac{a}{\q (a,b)} \Z.
\end{split}
\end{equation*} 
From that identity and \eqref{as:LiCo} the rest of the proof is obvious.
%
\end{proof}

\begin{proof}[Proof of Proposition \ref{thm:ex2}]
Without loss of generality assume that  
$$
M:=u(0)={ \max u. }
$$
 Let us again propagate this maximum as in \cite{Cov08,Cio12}. 
First note that 
\[
0=\Levy^\mu[u](0)=\sum_{n\geq1} \omega_n \big(\underbrace{u(\pm a_n)-M}_{\leq 0}\big).
\]
so that every term is $0$, i.e $u(\pm a_n)=M$ for all $n \geq1 $. Now if $u(m a_{n_1}+ka_{n_0})=M$ for some $m,k\in \Z$, $n_0\geq1$, and $n_1 \geq 1$,  then 
\begin{equation*}
 0=\Levy^\mu[u](m a_{n_1}+ka_{n_0}) 
 =\sum_{n \geq 1} \sum_{\pm} \omega_n \big(\underbrace{u(m a_{n_1}+ka_{n_0}\pm a_n)-M}_{\leq 0}\big),
\end{equation*}
and every term in the sum is again $0$. In particular, for $n=n_0,n_1$, we get
\[
u( m a_{n_1}+(k\pm1)a_{n_0} )=
u( (m\pm1)a_{n_1}+ka_{n_0} )=
M.
\]
 By induction, this shows that for all $n_0\geq1$ and $n_1 \geq 1$, we have
\[
u( x )=M={ \max u }  \quad  \forall x \in a_{n_0}\Z+a_{n_1} \Z.
\]
 Since \eqref{eq:ex2}--\eqref{eq:ex2-as} imply \eqref{as:LiCo}, we can then apply Lemma \ref{lem:CondMeasureAllPoints} to conclude the proof. 
\end{proof}

%
%

%


\subsubsection*{Case of general measures satisfying \eqref{as:LiCo}}

We are now ready to prove that \eqref{label:main2} $\Rightarrow$ \eqref{label:main1} in Theorem \ref{thm:main}.  Let us first recall a general result from  \cite{Cio12} (cf. \cite{Cov08}) which will give us a way of computing the set on which we can propagate the maximum.  
 Let us state it in multi--$d$ for later use. 

\begin{theorem}[Thm. 2 in \cite{Cio12}]\label{prop:Ciomaga}
Assume \eqref{as:mus},  $d \geq 1$,  $u \in C_\textup{b}^\infty(\R^d)$  satisfies $\Levy^\mu[u]= 0$ in $\R^d$, and $u$ has a global maximum at $x_0$. Then $u= u(x_0)$ on $x_0+\overline{\cup_{n\geq0}A_n}$ where
$$
A_0=\{0\} \quad\text{and}\quad A_{n+1}=A_n+\supp (\mu)\quad  \forall n\in \N. 
$$
\end{theorem}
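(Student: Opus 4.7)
The plan is to propagate the equality $u=u(x_0)$ one ``shell'' $A_n$ at a time, starting at $x_0$ and iterating. By symmetry of $\mu$, and since $u\in C^\infty_\textup{b}$, I would first rewrite, for any $y\in\R^d$ and any $r_0>0$,
\[
\Levy^\mu[u](y)=\tfrac12\!\int_{|z|<r_0}\!\!\bigl(u(y+z)+u(y-z)-2u(y)\bigr)\dd\mu(z)+\!\int_{|z|\geq r_0}\!\!\bigl(u(y+z)-u(y)\bigr)\dd\mu(z).
\]
This formula requires no principal value because the first integrand is $O(|z|^2)$, and, crucially, both integrands are manifestly $\leq 0$ whenever $y$ is a global maximum of $u$.

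The first key step: applied at $y=x_0$, the equation $\Levy^\mu[u](x_0)=0$ and the nonpositivity of the integrands force both integrals to vanish, so that $u(x_0+z)+u(x_0-z)-2u(x_0)=0$ for $\mu$-a.e.\ $|z|<r_0$ and $u(x_0+z)-u(x_0)=0$ for $\mu$-a.e.\ $|z|\geq r_0$. Since $u(x_0\pm z)-u(x_0)\leq 0$ everywhere, both identities yield $u(x_0+z)=u(x_0)$ for $\mu$-a.e.\ $z$. The map $z\mapsto u(x_0+z)-u(x_0)$ being continuous, its zero set is closed and of full $\mu$-measure, hence contains $\supp(\mu)$. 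This gives $u\equiv u(x_0)$ on $x_0+A_1$.

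The second step is an induction: every $y\in x_0+A_n$ already satisfies $u(y)=u(x_0)=\max u$, so $y$ is itself a global maximum and the same argument applied at $y$ yields $u\equiv u(x_0)$ on $y+\supp(\mu)$. Taking the union over $y\in x_0+A_n$ gives $u\equiv u(x_0)$ on $x_0+A_{n+1}$. A trivial use of continuity of $u$ then upgrades the equality from $x_0+\bigcup_{n\geq0}A_n$ to its closure, finishing the proof.

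The main (mild) subtlety is the passage from ``$\mu$-a.e.\ $z$'' to ``every $z\in\supp(\mu)$,'' which rests simultaneously on the continuity of $u$ and on the definition of $\supp(\mu)$ as the smallest closed set of full $\mu$-measure; this is why the hypothesis $u\in C^\infty_\textup{b}$ cannot be relaxed to $u\in L^\infty$ without reinterpreting the equation distributionally. The symmetrization at the start is what removes the principal value and exhibits both integrands as nonpositive at a maximum; without it one would have to balance the drift $z\cdot\nabla u(y)$ against the quadratic increment, which is needlessly cumbersome.
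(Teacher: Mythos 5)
The paper does not prove this statement; it is quoted as Theorem~2 of Ciomaga \cite{Cio12}, and the remark immediately following observes that the original result is established in the considerably more general setting of viscosity subsolutions of fully nonlinear integro-differential equations. Your proof is a correct, self-contained derivation in the smooth linear special case that is actually needed here, and it is the natural one. The symmetrization
\[
\Levy^\mu[u](y)=\tfrac12\int_{|z|<r_0}\bigl(u(y+z)+u(y-z)-2u(y)\bigr)\dd\mu(z)+\int_{|z|\geq r_0}\bigl(u(y+z)-u(y)\bigr)\dd\mu(z)
\]
is legitimate by \eqref{as:mus} and Taylor's theorem, both integrands are $\leq 0$ at a global maximum, and the deduction that the closed set $\{z:u(y+z)=u(y)\}$ has full $\mu$-measure and therefore contains $\supp(\mu)$ is exactly right, since $\supp(\mu)$ is the smallest closed set of full measure. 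The induction over the shells $A_n$ and the final passage to the closure by continuity are likewise sound. What you gain over simply invoking \cite{Cio12} is transparency: the argument becomes three lines of elementary analysis rather than a black-boxed viscosity-solution theorem. What you lose is generality (Ciomaga's version allows nonlinear operators and merely upper-semicontinuous subsolutions), but none of that generality is used in this paper: every application in Sections~3 and~4 is to $C^\infty_\textup{b}$ solutions of the linear equation, obtained after mollification. One could in fact substitute your argument for the citation without loss.
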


\begin{remark}
This result apply for fully nonlinear PDEs in the context of possibly irregular viscosity subsolutions (cf. \cite{Cio12} for more details).  
\end{remark}

 Theorem \ref{prop:Ciomaga} is roughly speaking for the case where $u$ reaches a global maximum as assumed in Propositions \ref{thm:ex1} and \ref{thm:ex2}. For the general case,  we need  a technique developed in Step 1 of the proof of Lemma 7.2 in \cite{BrCoHaVa17}, that we recall under the form of a lemma.   

\begin{lemma}[{cf. \cite{BrCoHaVa17}}]\label{lem:cov}
Assume \eqref{as:mus} and 
$\overline{\cup_{k \geq 0} A_k}={\R}$  where $\cup_{k \geq 0} A_k$ is   defined in  Theorem \ref{prop:Ciomaga}. If   $v \in C_{\textup{b}}^{{ \infty}}({\R})$ satisfies $\Levy^\mu[v] = 0$  in $\R$   and  $\{x_n\}_{n}$ is a sequence such that $\lim_{n \to +\infty} v(x_n)=\sup v$, then
for any $R \geq 0$,
\begin{equation*}
\limsu_{n \to  +\infty} \min_{ x \in \overline{B}_R(x_n) } v(x)= \sup v.
\end{equation*}
\end{lemma}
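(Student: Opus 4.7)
The plan is to reduce the lemma to the maximum-attained case treated in Theorem \ref{prop:Ciomaga} by a translation-and-compactness argument. Consider the translates
$$
v_n(x):=v(x+x_n),
$$
which are uniformly bounded together with all their derivatives since $v\in C_\textup{b}^\infty(\R)$. By Arzelà--Ascoli applied successively to $v_n,v_n',v_n'',\dots$ and a diagonal extraction, I can pass to a subsequence (not relabeled) such that $v_n\to w$ in $C^k_{\textup{loc}}(\R)$ for every $k$, with $w\in C_\textup{b}^\infty(\R)$ and $\sup w=\sup v$.

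Next I would verify that $\Levy^\mu[w]=0$ pointwise in $\R$. Translation invariance gives $\Levy^\mu[v_n]\equiv 0$, so fix $x\in\R$ and split the integral at $|z|\leq 1$ and $|z|>1$. On $\{|z|\leq 1\}$, a second-order Taylor expansion bounds the integrand uniformly in $n$ by $\tfrac12\|v''\|_\infty\,|z|^2$; on $\{|z|>1\}$ it is bounded by $2\|v\|_\infty$. Using $\int(|z|^2\wedge 1)\dd\mu<+\infty$ from \eqref{as:mus} and the pointwise convergence of $v_n$ and $v_n'$ on compact sets, dominated convergence yields $\Levy^\mu[w](x)=0$. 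Moreover $w(0)=\lim_n v_n(0)=\lim_n v(x_n)=\sup v=\sup w$, so $w$ achieves its supremum at $0$.

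Now Theorem \ref{prop:Ciomaga} applies to $w$ at $x_0=0$ and gives $w\equiv w(0)=\sup v$ on $\overline{\cup_{k\geq 0}A_k}$, which by hypothesis equals $\R$. Hence $w\equiv \sup v$ identically, and the convergence $v_n\to w$ is in particular uniform on the closed ball $\overline{B}_R(0)$. Therefore along the chosen subsequence,
$$
\min_{x\in\overline{B}_R(x_n)}v(x)=\min_{y\in\overline{B}_R(0)}v_n(y)\xrightarrow[n\to\infty]{}\min_{y\in\overline{B}_R(0)}w(y)=\sup v,
$$
which forces $\limsup_{n\to+\infty}\min_{x\in\overline{B}_R(x_n)}v(x)\geq\sup v$. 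The reverse inequality is immediate from $v\leq \sup v$, proving the claim.

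The main technical point is the passage $\Levy^\mu[v_n]=0\Rightarrow\Levy^\mu[w]=0$ under only locally uniform convergence of $v_n$ together with its first two derivatives; this is where the standard Lévy integrability $\int(|z|^2\wedge 1)\dd\mu<+\infty$ plays its role as the dominating majorant. Everything else is a direct combination of Arzelà--Ascoli with Theorem \ref{prop:Ciomaga}.
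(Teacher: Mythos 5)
Your argument is correct and follows essentially the same route as the paper: translate by $x_n$, extract a $C^2_{\mathrm{loc}}$-convergent subsequence via Arzelà--Ascoli, pass to the limit in $\Levy^\mu[v_n]=0$, apply Theorem~\ref{prop:Ciomaga} to the limit $w$ which attains its maximum at $0$, and transfer the constancy back via local uniform convergence. The only difference is that you spell out the dominated-convergence step (Taylor bound for $|z|\le 1$, sup bound for $|z|>1$) that the paper leaves implicit as ``easily take the limit.''
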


\begin{remark}
This result works in $\R^d$ but we will not need it.  The $\limsu$ is also in fact a true limit but we will not need it either. 
\end{remark}


\begin{proof}
Define $v_n(x):=v(x+x_n)$   where $v \in C^{{ \infty}}_b({\R})$.  Then by the Arzel\`a-Ascoli theorem,   $v_n \to v_\infty$  locally uniformly on  $\R$  for some $v_\infty$ along a subsequence  (denoted as the whole sequence to simplify).  Taking another subsequence if necessary, we can assume that the derivatives up to second order   locally uniformly  converge too. Hence, we can easily take the limit in the equation $\Levy^\mu[v_n] = 0$ to deduce that $\Levy^\mu[v_\infty] = 0$  in ${\R}$.  Moreover, $v_\infty\leq \sup v=:M$ and $v_\infty(0)=\lim_{n \to  +  \infty} v(x_n)=M$.  By Theorem \ref{prop:Ciomaga}, we conclude that  $v_\infty = M$ on $\overline{\cup_{k \geq 0} A_k}$ and by assumption $\overline{\cup_{k \geq 0} A_k}={\R}$. Hence for each $R \geq 0$,
 
$$
\max_{\overline{B}_R(x_n)}|v-M|=\max_{\overline{B}_R(0)}|v_n-v_\infty|\to0\quad\textup{as}\quad n\to+\infty
$$
by local uniform convergence of $v_n$.  In particular $\min_{\overline{B}_R(x_n)} v \to M$ (along a subsequence). 

\end{proof}

\begin{proof}[Proof of  Theorem \ref{thm:main}] 
The implication \eqref{label:main1} $\Rightarrow$ \eqref{label:main2} was already proved in Section \ref{sec:SufCond}  (cf. Proposition \ref{prop:counterExample}).  Let us consider the reciprocal assertion. We thus assume that \eqref{label:main2} holds, $u$ is merely bounded and solves $\Levy^\mu[u]=0$ (in $\R$), and we would like to prove that $u$ is a.e. equaled to a constant.

\medskip

\noindent\textbf{1.} \textit{Reduction of the proof to smooth functions.} 

\smallskip

Let $\rho_\varepsilon$ be a mollifier such that $\rho_\varepsilon \in C^\infty_\textup{c}(\R)$ and $\rho_\varepsilon \ast u \to u$  in $\mathcal{D}'(\R)$ as $\varepsilon \rightarrow 0$. If all $\rho_\varepsilon \ast u$ are constant, it will follow that $u$ is constant  since  $u_x$ will be zero as limit in $\mathcal{D}'(\R)$ of $(\rho_\varepsilon \ast u)_x=0$.  But $\rho_\varepsilon \ast u \in C^\infty_\textup{b}(\R)$ and we can also see that $\Levy^\mu[\rho_\varepsilon \ast u] = 0$ by taking $\psi(y)=\rho_\varepsilon(x-y)$ in \eqref{eq:DistEqual0} for fixed $x$. Hence, if we can prove  Theorem \ref{thm:main}[\eqref{label:main2} $\Rightarrow$ \eqref{label:main1}]  for smooth functions like $\rho_\varepsilon \ast u$, the result will follow for merely bounded functions like $u$. Let us  thus  continue by assuming without loss of generality that $u\in C_\textup{b}^\infty(\R)$ solves $\Levy^\mu[u] =  0$  in  $\R$.  

\bigskip

 \noindent\textbf{2.} \textit{Uniform lower bounds on $u_x$.} 

\smallskip

In the rest of the proof, we will prove that $u_x= 0$ which implies that $u$ is constant. We will  only  show that 
$$
M:=\sup u_x\leq0,
$$
  since $v=-u$ solves $\Levy^\mu[v] =0$ and the same reasoning will automatically give us that $\inf u_x=-\sup v_x \geq0$.   Let us start from the equation for $u_x$ obtained by differentiating $\partial_{x}  (\Levy^\mu[u])= 0$,
\begin{equation*}
\Levy^\mu[ u_x]= 0  \quad \textup{in} \quad \R.
\end{equation*}
  Let  $\{x_n\}_{n } \subseteq \R$ and $\{\varepsilon_n\}_{n} \subseteq (0,+\infty)$  such that 
\begin{equation*}
u_x(x_n)
=M-\varepsilon_n,
\end{equation*}
where $\varepsilon_n\to0$ as $n \to+ \infty$.  Let now  $\overline{\cup_{k \geq 0} A_k}$  be defined in Theorem \ref{prop:Ciomaga} i.e. 
$$
A_k=\{0\}+\underbrace{\supp(\mu)+\dots +\supp (\mu)}_{k \text{ times}}.
$$
Since $\supp (\mu)=-\supp (\mu)$, it is easy to see that
$$
a \Z+b \Z \subseteq \cup_{k \geq0}A_k \quad \forall a,b \in \supp (\mu).
$$
 When \eqref{as:LiCo} holds,  Lemma \ref{lem:CondMeasureAllPoints} 
then implies that 
$$
\R=\overline{\bigcup_{a,b \in \supp (\mu)} (a \Z+b \Z)} \subseteq \overline{\bigcup_{k \geq   0 } A_k}.
$$
 An application of Lemma \ref{lem:cov} with $v=u_x$ then shows that for all $R\geq0$
\begin{equation*}
\min_{\overline{B}_R(x_{n})} u_x \to M \quad \mbox{as} \quad n \to +\infty
\end{equation*}
(up to a subsequence).

\bigskip

 \noindent\textbf{4.} \textit{Conclusion.}  

\smallskip

If $M>0$, then for any $ 0<\eta<M$  and $R>2 \|u\|_{ \infty }/( M -\eta)$,  we can take a \textit{fixed} $n$ large enough such that 
\[
\min_{ \overline{B}_R(x_{  n  }) }  u_x  \geq M -\eta>0.
\]
 Writting then that $u(x_n+R)=u(x_n)+\int_{x_n}^{x_n+R} u_x(x) \dd x$, 
\begin{equation*}
\begin{split}
u(x_{  n  }+R) &\geq u(x_{  n  }) + R \min_{ \overline{B}_R(x_{  n  }) }  u_x \\
& > u(x_{  n  }) + \underbrace{\frac{2\|u\|_{ \infty }}{( M -\eta)} \min_{ \overline{B}_R(x_{  n  }) }  u_x }_{ \geq \frac{2 \|u\|_\infty}{M-\eta} (M-\eta)=2 \|u\|_\infty } \geq \|u\|_{ \infty }.
\end{split}
\end{equation*}
We obtain the contradiction that $u(x_n) > \|u\|_\infty$ for this \textit{fixed} $n$. This completes the proof that $u_x = 0$, and thus the proof of the theorem.
%
\end{proof}

\section{Characterization in $\R^d$ in terms of groups and lattices}\label{sec:group}

  
 Let us now consider the dimension $d \geq 1$. Our general periodicity result Theorem \ref{thm:period} is given in Section \ref{sec:p}. Our  characterization results for the Liouville theorem are given in Theorems \ref{thm:main-d} and \ref{thm:consequences-d} of  Sections \ref{sec:cl1} and \ref{sec:cl2}, respectively.  Theorem \ref{thm:main-d} is a generalization of Theorem \ref{thm:main} expressed in terms of subgroups of $\R^d$, and Theorem \ref{thm:consequences-d} precises the general form of the operators for which Liouville fails (cf. also Proposition \ref{cor:consequences} for the case $d=1$).  All the proofs are given (almost) just after the statements of the results.

\subsection{Reminders on groups and lattices}

Let us first recall some basic facts  about  groups that will be needed (cf. e.g. \cite{God87}, \cite[Chapter VII]{Bou98} or \cite[Section 1.1]{Mar03}). 

\begin{definition}
$(G,+)$ is a \textit{subgroup} of $(\R^d,+)$ if $\emptyset \neq G \subseteq \R^d$ and 
$$
{\forall \ x,y \in G, \quad  x+y \in G \quad \mbox{and} \quad -x \in G. }
$$
The \textit{subgroup generated} by a set $S \subseteq \R^d$, denoted $G(S)$, is the smallest group containing $S$.
\end{definition}


\begin{definition}
If two subgroups $G,\tilde{G} \subseteq \R^d$ satisfy $G \cap \tilde{G}=\{0\}$, the sum $G+\tilde{G}$ is \textit{direct} and we write $G+\tilde{G}=G \oplus \tilde{G}$.
\end{definition}

\begin{definition}
A subgroup $\Lambda \subseteq \R^d$ is a \textit{lattice} if it is generated by a certain basis $(a_1,\dots,a_d)$ of the vector space $\R^d$ i.e. 
$
\Lambda=
\oplus_{n=1}^d a_n \Z.
$
A \textit{relative lattice} is a lattice of a vector subspace of $\R^d$.
\end{definition}

\begin{remark}\label{rem:lattice}
\begin{enumerate}[{\rm (a)}]
\item\label{item:countable} A typical example of (full) lattice is $\Lambda=\Z^d$. More generally, every (full)  lattice  are isomorphic to $\Z^d$, up to a change of coordinates from the basis $(a_1,\dots,a_d)$ into the canonical one.
\item By convention, $\Lambda=\{0\}$  is a  relative lattice.
\end{enumerate}
\end{remark}

The following result justifies the introduction of the notion of lattices.

\begin{lemma}\label{lem:lattice}
A subgroup $G$ of $\R^d$ is discrete if and only if it is a relative lattice.
\end{lemma}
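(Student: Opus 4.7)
The plan is to handle the two implications separately, with the easy direction first and then the classical induction on the dimension of the span for the converse.

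\textbf{Easy direction (relative lattice $\Rightarrow$ discrete).} Suppose $G=\bigoplus_{n=1}^{k} a_n\Z$ for some basis $(a_1,\ldots,a_k)$ of a subspace $V\subseteq\R^d$. The linear map $\varphi:\R^k\to V$, $(t_1,\ldots,t_k)\mapsto\sum t_n a_n$, is a bijective linear map between finite-dimensional normed spaces, hence a homeomorphism onto $V$. Since $\Z^k$ is discrete in $\R^k$ and $V$ is closed in $\R^d$ (so the subspace topology on $V$ and the ambient topology agree near each point of $V$), the image $G=\varphi(\Z^k)$ is discrete in $\R^d$.

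\textbf{Hard direction (discrete $\Rightarrow$ relative lattice).} Set $V:=\mathrm{span}(G)$ and $k:=\dim V$. I will argue by induction on $k$. The base case $k=0$ means $G=\{0\}$, which is a relative lattice by convention.

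For the induction step, I would first pick $v_1,\ldots,v_k\in G$ that are linearly independent, so that they form a basis of $V$. Applying the induction hypothesis to the discrete subgroup $G':=G\cap\mathrm{span}(v_1,\ldots,v_{k-1})$, whose span has dimension exactly $k-1$ because it contains $v_1,\ldots,v_{k-1}$, produces a basis $(a_1,\ldots,a_{k-1})$ of $\mathrm{span}(v_1,\ldots,v_{k-1})$ with $G'=\bigoplus_{i=1}^{k-1}a_i\Z$. The key constructive step is then to produce $a_k$: I would consider
\[
S:=\Bigl\{x\in G : x=\sum_{i=1}^{k-1}t_i a_i+t\,v_k,\ t_i\in[0,1),\ t\in(0,1]\Bigr\},
\]
which is bounded, nonempty (it contains $v_k$), and, because $G$ is discrete, finite. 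Let $a_k\in S$ minimize the last coordinate~$t$, and write $a_k=\sum s_i a_i + s\,v_k$ with $s\in(0,1]$.

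Finally, I would verify $G=\bigoplus_{i=1}^k a_i\Z$ by a Euclidean-division argument: for any $x=\sum t_i a_i+t\,v_k\in G$, write $t=ms+r$ with $m\in\Z$ and $r\in[0,s)$; then $x-m a_k$ lies in $G$ with $v_k$-coefficient $r$, and adjusting by an appropriate element of $G'=\bigoplus a_i\Z$ reduces the $a_i$-coefficients into $[0,1)$. If $r>0$ this would produce an element of $S$ with last coordinate strictly less than $s$, contradicting the choice of $a_k$; hence $r=0$ and $x\in G'+a_k\Z$. Linear independence of $(a_1,\ldots,a_k)$ is automatic from $s>0$.

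\textbf{Main obstacle.} The delicate point is the existence and correct choice of $a_k$: one must simultaneously localize to a bounded (hence $G$-finite) region, ensure $a_k$ has a strictly positive $v_k$-component, and verify that minimality of this component forces the Euclidean remainder to vanish. Once this is in place, the rest is bookkeeping.
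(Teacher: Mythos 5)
Your proof is correct, but it takes a genuinely different route from the paper. The paper does not prove Lemma~\ref{lem:lattice} directly: it deduces it from Theorem~\ref{decom:opt} (Martinet's classification $G=V\oplus\Lambda$ of closed subgroups of $\R^d$), the point being that a discrete subgroup is closed and its vector-space part $V$ must be $\{0\}$. You instead give a self-contained, constructive induction on $k=\dim\mathrm{span}(G)$: pick a short vector $a_k$ in a bounded fundamental domain over the rank-$(k-1)$ sublattice and run Euclidean division on the last coordinate. What you lose is brevity and the free extra information that Theorem~\ref{decom:opt} provides elsewhere in the paper; what you gain is that the lemma no longer hangs on a cited black box, and the argument also re-proves the ingredient $\inf_{a\in\Lambda\setminus\{0\}}\varepsilon_a>0$ used later in the proof of Theorem~\ref{thm:consequences-d}.

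One step deserves to be made explicit: you assert that $S$ is finite ``because $G$ is discrete.'' Discreteness of a \emph{subset} does not by itself preclude accumulation inside a bounded region; what you are really using is that a discrete \emph{subgroup} is uniformly discrete (hence closed): if $B_\varepsilon(0)\cap G=\{0\}$ then $|g-g'|\ge\varepsilon$ for all distinct $g,g'\in G$ by translation, so $G$ meets any bounded set in a finite set. This is standard, but since the whole induction step pivots on $S$ being finite so the minimizer $a_k$ exists, the one-line justification is worth writing down. The remaining details — that $\mathrm{span}(G')$ has dimension exactly $k-1$, that a vanishing $v_k$-coefficient forces membership in $G'$, and that $s>0$ gives linear independence of $(a_1,\dots,a_k)$ — all check out.
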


This lemma is a consequence of the following  more general result (cf. e.g. \cite[Thm. 1.1.2]{Mar03}):

\begin{theorem}\label{decom:opt}
A closed subgroup $G$ of $\R^d$ is of the form 
$
G=V \oplus \Lambda,
$
for some vector space $V \subseteq \R^d$ and relative lattice $\Lambda  \subseteq \R^d$ such that $V \cap \textup{Span}_\R \Lambda=\{0\}$.
\end{theorem}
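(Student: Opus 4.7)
The plan is to extract from $G$ its ``continuous'' part $V$ and its ``discrete'' part $\Lambda$. First I would define
$$
V := \{x \in \R^d : \R x \subseteq G\},
$$
the union of all one-parameter subgroups contained in $G$. By construction $V$ is stable under scalar multiplication; and if $x,y\in V$ then $t(x+y)=tx+ty\in G$ for every $t\in\R$, so $V$ is also closed under addition. Hence $V$ is a vector subspace of $\R^d$ lying in $G$, automatically closed in $\R^d$ (any finite-dimensional subspace is), and it is by construction the largest vector subspace contained in $G$.

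Next I would pick any algebraic complement $W$ with $\R^d=V\oplus W$ and let $\pi:\R^d\to W$ denote the projection with $\ker\pi=V$. Since $V\subseteq G$ we have $G+V=G$, so $\pi(G)$ is the image of a $V$-saturated closed set and hence closed in $W$. Write $\Lambda_0:=\pi(G)$. The crucial claim is that $\Lambda_0$ is \emph{discrete}. If not, the classical dichotomy ``every non-discrete closed subgroup of $\R^k$ contains a line'' --- proved by extracting a convergent subsequence of $w_n/|w_n|$ from a sequence $w_n\to 0$ in the subgroup with $w_n\neq 0$, and then approximating $tu$ by integer multiples of $w_n$ --- would yield $w\in W\setminus\{0\}$ with $\R w\subseteq\Lambda_0$. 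For each $t\in\R$ I would pick $g_t\in G$ with $\pi(g_t)=tw$; then $g_t-g_0\in G$ and $g_t-g_0=tw+v_t$ for some $v_t\in V\subseteq G$, so $tw=(g_t-g_0)-v_t\in G$. This gives $\R w\subseteq G$, forcing $w\in V$ and contradicting $W\cap V=\{0\}$.

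Once $\Lambda_0$ is known to be a discrete subgroup of the finite-dimensional space $W$, a standard induction on the rank (at each stage pick a nonzero element of minimal norm and quotient by the line it spans) shows that $\Lambda_0$ is generated as a $\Z$-module by some $\R$-linearly independent vectors $w_1,\dots,w_m\in W$. I would then choose lifts $g_i\in G$ with $\pi(g_i)=w_i$ and set $\Lambda:=\Z g_1+\cdots+\Z g_m\subseteq G$. For any $g\in G$, writing $\pi(g)=\sum n_i w_i$ with $n_i\in\Z$ gives $g-\sum n_i g_i\in\ker\pi\cap G=V$, hence $G=V+\Lambda$. Moreover, if $\sum c_i g_i\in V$ then $0=\pi\bigl(\sum c_i g_i\bigr)=\sum c_i w_i$ forces $c_1=\cdots=c_m=0$ by linear independence of the $w_i$, so $\textup{Span}_\R\Lambda\cap V=\{0\}$; this simultaneously yields the direct-sum decomposition $G=V\oplus\Lambda$ and the transversality condition. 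The main obstacle is the discrete-or-line dichotomy invoked in the middle paragraph: it is an elementary but nontrivial compactness argument and is the only nonformal input needed for the whole structure theorem.
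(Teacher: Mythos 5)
The paper does not supply a proof of Theorem~\ref{decom:opt}; it is stated as a known result and attributed to Martinet \cite[Thm.~1.1.2]{Mar03} (see also Bourbaki \cite{Bou98}, Chapter~VII), so there is no in-paper argument to compare against. Your proof is correct and is, in outline, the standard textbook proof of this structure theorem: take $V$ to be the largest linear subspace contained in $G$, observe that the saturated image $\pi(G)$ is a closed subgroup of a complement $W$ (closed because $G$ is $\pi$-saturated and $\pi$ is an open quotient), rule out any line in $\pi(G)$ via the discrete-or-line dichotomy for closed subgroups of $\R^k$ (which would lift, using $V\subseteq G$, to a line in $G$ outside $V$), and then invoke the classification of discrete subgroups of $\R^k$ as generated by $\R$-independent vectors. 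Your final step --- verifying that the lifts $g_1,\dots,g_m$ are $\R$-linearly independent, so that $\Lambda=\Z g_1+\cdots+\Z g_m$ is a relative lattice with $V\cap\textup{Span}_\R\Lambda=\{0\}$ --- is exactly what is needed to land on the decomposition in the stated form.
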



 In particular: 

\begin{corollary}\label{pro:group-multid}  
A subgroup $G$ of $\R^d$ is dense if and only if $G \not \subseteq H+c \Z$ for all vector spaces $H \subseteq \R^d$ of codimension $1$ and $c \in \R^d$.
\end{corollary}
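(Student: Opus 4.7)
The plan is to prove both implications by contraposition, relying on Theorem~\ref{decom:opt} for the nontrivial direction.

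\textbf{Easy direction.} First I would show: if $G \subseteq H + c\Z$ for some hyperplane $H$ of codimension $1$ and some $c \in \R^d$, then $G$ is not dense. The point is that the set $H + c\Z$ is always a proper closed subset of $\R^d$ with empty interior. Indeed, if $c \in H$ then $H + c\Z = H$, a proper closed hyperplane. If $c \notin H$, then $H + c\Z = \bigcup_{n \in \Z} (H + nc)$ is a disjoint union of parallel affine hyperplanes, which is closed (the projection of $G$ along $H$ lands in the discrete set $c\Z$ mod $H$) and has empty interior. Either way $\overline{G} \subseteq H + c\Z \neq \R^d$, so $G$ is not dense.

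\textbf{Hard direction.} Assume $G$ is not dense and prove that $G \subseteq H + c\Z$ for some hyperplane $H$ of codimension $1$ and some $c \in \R^d$. The closure $\overline{G}$ is a closed proper subgroup of $\R^d$, so Theorem~\ref{decom:opt} applies:
$$
\overline{G} = V \oplus \Lambda
$$
with $V$ a vector subspace and $\Lambda$ a relative lattice such that $V \cap \textup{Span}_\R \Lambda = \{0\}$. Set $W := V + \textup{Span}_\R \Lambda$, a subspace of dimension $\dim V + \dim \textup{Span}_\R \Lambda$.

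I then split on whether $W = \R^d$ or not:
\begin{enumerate}[\rm(a)]
\item If $W \subsetneq \R^d$, then $W$ is contained in some codimension-$1$ subspace $H$. Taking $c = 0$ gives $G \subseteq \overline{G} \subseteq W \subseteq H = H + c\Z$.

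\item If $W = \R^d$, then since $\overline{G} \neq \R^d$ we must have $V \neq \R^d$, hence $\Lambda \neq \{0\}$. Writing $\Lambda = \bigoplus_{i=1}^{r} a_i \Z$ with $r = d - \dim V \geq 1$ and $(a_1,\dots,a_r)$ a basis of $\textup{Span}_\R \Lambda$, I set
$$
H := V \oplus \textup{Span}_\R(a_1,\dots,a_{r-1}) \quad \text{and} \quad c := a_r.
$$
Since $V$, $a_1,\dots,a_r$ together span $\R^d$ and are linearly independent (by $V \cap \textup{Span}_\R \Lambda = \{0\}$ and the basis property of $(a_i)$), $H$ has codimension exactly $1$. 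Finally,
$$
\overline{G} = V + a_1\Z + \cdots + a_{r-1}\Z + a_r \Z \subseteq H + c\Z,
$$
so $G \subseteq H + c\Z$ as required.
\end{enumerate}

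\textbf{Main obstacle.} The only subtle point is verifying that the $H$ built in case (b) really is of codimension~$1$; this is where the direct sum condition $V \cap \textup{Span}_\R \Lambda = \{0\}$ from Theorem~\ref{decom:opt} is used crucially, since it guarantees that $V, a_1,\dots,a_r$ are linearly independent and together form a basis of $W = \R^d$. Everything else is a straightforward case analysis on the structural decomposition provided by Theorem~\ref{decom:opt}.
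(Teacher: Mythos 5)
Your proof is correct, and it spells out exactly the deduction from Theorem~\ref{decom:opt} that the paper leaves implicit (the paper states the corollary with only ``In particular:'' and no written-out argument). Both the case split on whether $V + \textup{Span}_\R\Lambda$ is all of $\R^d$ and the dimension count showing $H$ has codimension~$1$ are the natural steps, and the verification in the easy direction that $H+c\Z$ is closed with empty interior is sound.
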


\begin{remark}\label{rem:dense-discrete}
In dimension $d=1$, $G$ is either dense or discrete i.e. of the form $G=g \Z$ for some $g \geq 0$ ($g=0$ corresponding to $G=\{0\}$).
\end{remark}


%

%
%

\subsection{Periodicity of the solutions of $\Levy^\mu[u]=0$}\label{sec:p}

\begin{definition}
Given ${S   \subseteq \R^d}$, we say that $u \in L^\infty(\R^d)$ is \emph{$S$-periodic} if 
\[
\int_{\R^d} \big(u(x+s)-u(x)\big) \psi(x) \dd x=0 \quad \forall s\in S, \forall \psi \in C^\infty_\textup{c}(\R^d).
\]
\end{definition}

\begin{remark}
If $u$ is continuous, then $u$ is $S$-periodic if and only if it is $s$-periodic for any $s \in S$.
\end{remark}

Here is our general periodicity result for arbitrary L\'evy measures satisfying only the standard assumption \eqref{as:mus}. 

\begin{theorem}\label{thm:period} Assume \eqref{as:mus} and let   $u\in L^\infty(\R^d)$.    Then $\Levy^\mu[u]=0$ in $\mathcal{D}'(\R^d)$ if and only if $u$ is  $\overline{G(\supp(\mu))}$-periodic   almost everywhere.
\end{theorem}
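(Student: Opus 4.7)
\emph{Easy direction} (periodicity $\Rightarrow \Levy^\mu[u]=0$ in $\mathcal{D}'$). Assume $u$ is $\overline{G(\supp(\mu))}$-periodic a.e. Testing against $\psi\in C^\infty_\textup{c}(\R^d)$ and using the symmetric representation of $\Levy^\mu[\psi]$ from Section \ref{sec:as},
\[
\int u(x)\,\Levy^\mu[\psi](x)\dd x=\frac{1}{2}\iint u(x)\bigl(\psi(x+z)+\psi(x-z)-2\psi(x)\bigr)\dd\mu(z)\dd x.
\]
A Fubini exchange (justified by $\psi\in C^2_\textup{b}$ and \eqref{as:mus}) followed by the change of variables $x\mapsto y\mp z$ in the $x$-integral rewrites this as
\[
\frac{1}{2}\int\psi(y)\int\bigl(u(y+z)+u(y-z)-2u(y)\bigr)\dd\mu(z)\dd y,
\]
which vanishes because the inner integrand is $0$ $\mu$-a.e.\ thanks to $\supp(\mu)\subseteq\overline{G(\supp(\mu))}$ and the periodicity of $u$.

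\emph{Hard direction.} A mollification argument as in Step 1 of the proof of Theorem \ref{thm:main} reduces the statement to $u\in C^\infty_\textup{b}(\R^d)$: once periodicity is known there, passing $\varepsilon\to 0$ in $\mathcal{D}'$ transfers it from $\rho_\varepsilon\ast u$ to $u$. Fix $s\in\supp(\mu)$ and set $v(x):=u(x+s)-u(x)$. Linearity and translation invariance yield $v\in C^\infty_\textup{b}(\R^d)$ with $\Levy^\mu[v]=0$, and the crux is to show $v\equiv 0$: this provides $s$-periodicity of $u$ for every $s\in\supp(\mu)$, and since the set of periods of a continuous function is a closed subgroup of $\R^d$, it then contains $\overline{G(\supp(\mu))}$.

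Assume for contradiction $M:=\sup v>0$ and choose $x_n$ with $v(x_n)\to M$. The translates $v_n(x):=v(x+x_n)$ are uniformly bounded in every $C^k$, so Arzel\`a--Ascoli extracts a subsequence with $v_n\to v_\infty$ locally uniformly in all derivatives; the uniform bound $|v_n(x+z)+v_n(x-z)-2v_n(x)|\leq C(|z|^2\wedge 1)$ combined with \eqref{as:mus} allows dominated convergence to pass $\Levy^\mu[v_n]=0$ to the limit, yielding $\Levy^\mu[v_\infty]=0$. Since $v_\infty(0)=M=\sup v_\infty$, Theorem \ref{prop:Ciomaga} forces $v_\infty\equiv M$ on $\overline{\bigcup_{n\geq 0} A_n}$; by symmetry of $\supp(\mu)$ the semigroup $\bigcup_{n\geq 0} A_n$ coincides with $G(\supp(\mu))$, so in particular $v_\infty(ks)=M$ for every $k\in\N$, i.e.\ $v(x_n+ks)\to M$ as $n\to\infty$ for each fixed $k$. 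The telescoping identity
\[
\sum_{k=0}^{N-1}v(x_n+ks)=u(x_n+Ns)-u(x_n)
\]
bounds its left-hand side by $2\|u\|_\infty$, yet for each fixed $N$ it tends to $NM$ as $n\to\infty$; picking $N>2\|u\|_\infty/M$ yields the contradiction. Applying the same reasoning to $-v$ gives $\inf v\geq 0$, hence $v\equiv 0$.

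\emph{Main obstacle.} The delicate step is the compactness for the sequence $v_n$: extracting the smooth limit $v_\infty$ and passing $\Levy^\mu[v_n]=0$ to it both rely on uniform $C^2$ control of $v_n$ combined with the L\'evy integrability in \eqref{as:mus}. Once this is achieved, the identification $\bigcup_{n\geq 0} A_n=G(\supp(\mu))$ via symmetry is immediate, and the final unbounded-drift contradiction mirrors the one at the end of the proof of Theorem \ref{thm:main}.
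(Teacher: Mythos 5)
Your proof is correct and follows essentially the same strategy as the paper: a mollification argument to reduce to $u\in C^\infty_\textup{b}(\R^d)$, translation to a maximizing sequence $x_n$ of $v$, extraction of a locally uniform limit $v_\infty$ by Arzel\`a--Ascoli, passage to the limit in $\Levy^\mu[v_n]=0$, application of Theorem \ref{prop:Ciomaga} to propagate $v_\infty\equiv M$ on $\overline{\cup_{n}A_n}$, and a boundedness contradiction. You make two small but genuine streamlinings worth noting. First, the paper fixes an arbitrary $g\in\overline{G(\supp(\mu))}$ from the outset, whereas you prove $s$-periodicity only for $s\in\supp(\mu)$ and then appeal to the fact that the period set of a continuous bounded function is a closed subgroup of $\R^d$; this shifts a little of the work from analysis to a one-line group-theoretic observation. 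Second, the paper extracts a further subsequential limit $u_\infty$ of $u(\cdot+x_n)$ and iterates $u_\infty((m+1)g)=u_\infty(0)+(m+1)M$, while you avoid this extra extraction entirely by telescoping $\sum_{k=0}^{N-1}v(x_n+ks)=u(x_n+Ns)-u(x_n)$, sending $n\to\infty$ for fixed $N$ to get $NM\le 2\|u\|_\infty$, and then letting $N$ grow. Both departures are inessential but make the argument marginally cleaner; the key ingredients (symmetry of $\supp(\mu)$ to identify $\cup_n A_n=G(\supp(\mu))$, the Ciomaga propagation result, and the boundedness of $u$) are the same as in the paper.
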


%


%

\begin{proof}[Proof] For notational simplicity, denote $G:=\overline{G(\supp(\mu))}$. 
  \medskip

\textbf{1.} \textit{Case $u$ smooth.}

\smallskip

Let us first show the theorem for $u \in C_\textup{b}^\infty(\R^d)$.   If $u$ is $G$-periodic then for all $x\in \R^d$ we have that $u(x)=u(x+z)$ for all $z\in\supp(\mu)\subseteq G$. Then
\begin{equation*}
\begin{split}
\Levy^\mu[u](x)&=\textup{P.V.}\int_{ \R^d \setminus \{0\} } \big(u(x+z)-u(x)\big) \dd \mu(z)\\
&=\textup{P.V.}\int_{z\in\supp(\mu)} \big(u(x+z)-u(x)\big) \dd \mu(z)=0,
\end{split}
\end{equation*}
which proves the if part. 

Now assume that $\Levy^\mu[u] =0$ in $\R^d$. 
Fix an arbitrary $g\in G$, and define
\[
v(x):=u(x+g)-u(x).
\]
 If we show that $v =  0$  in $\R^d$, then the proof is complete.  Let us first show that $v \leq 0$. Let 
$$
{M:=  \sup v }
$$ 
and take a sequence $\{x_n\}_{n}$ such that 
\[
v(x_n)\stackrel{n\to \infty}{\longrightarrow} M.
\]
 Let us define  $u_n(x):=u(x+x_n)$ and $v_n(x):=v(x+x_n)$  similarly than in the proof of Lemma \ref{lem:cov} but with the new function $v$,  and note that  
$\Levy^\mu[v_n] = 0$  in $\R^d$. Since $v \in C^\infty_\textup{b}(\R^d)$,  Arzel\`a-Ascoli  theorem implies that there exists $v_\infty$ such that $v_n \to v_\infty$ locally uniformly (up to a subsequence), and taking another subsequence if necessary, we can assume that the derivatives up to second order converge and  pass to the limit in the equation $\Levy^\mu[v_n] = 0$ to deduce that $\Levy^\mu[v_\infty] = 0$  in $\R^d$.   Moreover, $v_\infty$ attains its maximum at $x=0$ since $v_\infty\leq M$ and 
\[
v_\infty(0)=\lim_{n\to+ \infty}v_n(0)=\lim_{n\to +\infty}v(x_n)=M.
\]
 Thus by Theorem \ref{prop:Ciomaga} we have $v_\infty= M$ in $\overline{\cup_{k \geq 0} A_k}$, where 
$$
A_k=\{0\}+\underbrace{\supp(\mu)+\dots +\supp (\mu)}_{k \text{ times}}.
$$
Since $\supp (\mu)=-\supp (\mu)$, it is easy to see that $\cup_{k \geq 0} A_k$ is a subgroup containing $A_1=\supp (\mu)$ and therefore
$$
G=\overline{G(\supp (\mu))} \subseteq \overline{\cup_{k \geq 0} A_k}.
$$
It follows that $v_\infty= M$ in $G$.
 Arguing with $\{u_n\}_{n }$ as for $\{v_n\}_{n }$, there is some $u_\infty$ such that $u_n \to u_\infty$ as $n \to + \infty$ locally uniformly (up to a   further subsequence   if necessary to get the convergence of $u_n$ and $v_n$ along the same sequence).   By construction $v_\infty(x)=u_\infty(x+g)-u_\infty(x)$. For any  $m \in \Z$, we take $x=mg$ to get $M=v_\infty(mg)=u_\infty((m+1)g)-u_\infty(mg)$. By iteration
\[
u_\infty((m+1)g)=u_\infty(mg)+M=\ldots = u_\infty(0)+(m+1)M.
\]
 But since $u_\infty$ is bounded, then the only choice is $M=0$ and thus $v\leq M=0$.  A similar arguments shows that $v \geq 0$ and completes the proof   when $u \in C_\textup{b}^\infty(\R^d)$. 

\bigskip

\textbf{2.} \textit{Case $u$ merely bounded.}

\smallskip

We again consider the convolution $\rho_\varepsilon \ast u$ with a standard mollifier. Recall that $\rho_\varepsilon \ast u \to u$ in $L_{\textup{loc}}^1(\R^d)$ as $\varepsilon \rightarrow 0$  and that $\|\rho_\varepsilon  \ast u \|_\infty \leq \|u\|_\infty$. Now if $u$ is $G$-periodic, then every $\rho_\varepsilon \ast u$ is also $G$-periodic:
\[
 (\rho_\varepsilon \ast u)(x+g)= (\rho_\varepsilon \ast u(\cdot+g))(x)=(\rho_\varepsilon \ast u)(x),\]
  and thus $\Levy^\mu[\rho_\varepsilon \ast u]  = 0$  in $\R^d$  by the previous step. Taking any test function  $\psi \in C^\infty_c(\R^d)$, it is easy to see that $\Levy^\mu[\psi] \in C^\infty_{\textup{b}} \cap L^1(\R^d)$, and this is enough to pass to the limit in 
$$
0=\int_{\R^d} \Levy^\mu[\rho_\varepsilon \ast u] \psi \dd x = \int_{\R^d} \Levy^\mu[\psi] (\rho_\varepsilon \ast u) \dd x\to \int_{\R^d} \Levy^\mu[\psi] u \dd x  \quad \mbox{as} \quad  \varepsilon \rightarrow 0.
$$
This proves that $\Levy^\mu[u] = 0$ in $\mathcal{D}'(\R^d)$. 

Conversely, if $\Levy^\mu[u]= 0$   in $\mathcal{D}'(\R^d)$  then every $\rho_\varepsilon \ast u$ satisfy $\Levy^\mu[\rho_\varepsilon \ast u] = 0$ in $\R^d$. By the  previous  step, every $\rho_\varepsilon \ast u$ is $G$-periodic  and at the limit $\varepsilon \to 0$ we obtain that for any $g \in G$ and $\psi \in C^\infty_{\textup{c}}(\R^d)$, 
$$
0=\int_{\R^d} \big(\rho_{\varepsilon} \ast u(x+g)-\rho_{\varepsilon} \ast u(x)\big) \psi(x) \dd x \to \int_{\R^d} \big(u(x+g)-u(x)\big) \psi(x) \dd x.
$$ 
The proof is complete. 
\end{proof}

\begin{remark}
Notice that to show Theorem \ref{thm:period}, we did not use at all any lower bound on balls as in Lemma \ref{lem:cov}! However, Theorem \ref{thm:period} is very general and will have many consequences on the Liouville theorem. 
\end{remark}

\subsection{The Liouville theorem as a byproduct of periodicity}\label{sec:cl1}

We are now ready to give multi--$d$ necessary and sufficient conditions to have the Liouville theorem.


\begin{theorem}\label{thm:main-d}
Assume $d \geq 1$ and \eqref{as:mus}. Then the following statements are equivalent:
\begin{enumerate}[{\rm (a)}]
\item\label{item:d-t-a}    If $u\in L^\infty(\R^d)$ satisfies $\Levy^\mu[u]=0$ in $\mathcal{D}'(\R^d)$, then $u$ is constant.
\item\label{item:d-t-b}   The additive subgroup generated by $\supp (\mu)$ is dense in $\R^d$.
 
\item\label{item:d-t-bp} For all vector space $V \subseteq \R^d$ of dimension  less or equal  $d-1$ and relative lattice $\Lambda \subseteq \R^d$ such that $V \cap \textup{Span}_\R \Lambda=\{0\}$,  $\supp(\mu) \not\subseteq V \oplus \Lambda$. 
\item\label{item:d-t-c} For all vector space $H \subseteq \R^d$ of dimension $d-1$ and vector $c \in \R^d$, 
$\supp(\mu) \not\subseteq H + c \Z$.
\end{enumerate} 
\end{theorem}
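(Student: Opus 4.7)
The plan is to establish the four equivalences by combining Theorem \ref{thm:period}, Corollary \ref{pro:group-multid}, and Theorem \ref{decom:opt}. Specifically, I would prove \eqref{item:d-t-a}$\Leftrightarrow$\eqref{item:d-t-b} directly from the periodicity result, then \eqref{item:d-t-b}$\Leftrightarrow$\eqref{item:d-t-c} as an immediate application of the group-theoretic Corollary \ref{pro:group-multid}, and finally \eqref{item:d-t-bp}$\Leftrightarrow$\eqref{item:d-t-c} by the standard decomposition of closed subgroups.

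For \eqref{item:d-t-b}$\Rightarrow$\eqref{item:d-t-a}, density of $G(\supp(\mu))$ means $\overline{G(\supp(\mu))}=\R^d$, so Theorem \ref{thm:period} gives that $u$ is $\R^d$-periodic almost everywhere. Mollifying with a standard $\rho_\varepsilon$, every $\rho_\varepsilon\ast u\in C_\textup{b}^\infty(\R^d)$ is $\R^d$-periodic in the pointwise sense (test $\psi(y)=\rho_\varepsilon(x-y)$ in the definition of periodicity), hence constant; passing to the limit $\varepsilon\to0$ in $L^1_\textup{loc}$ shows that $u$ equals an a.e. constant. For the converse \eqref{item:d-t-a}$\Rightarrow$\eqref{item:d-t-b}, I would argue by contrapositive and exhibit a nonconstant bounded solution when $G(\supp(\mu))$ is not dense. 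By Corollary \ref{pro:group-multid} applied to $G(\supp(\mu))$, there exist a codimension-one subspace $H\subseteq\R^d$ and $c\in\R^d$ with $\supp(\mu)\subseteq G(\supp(\mu))\subseteq H+c\Z$; necessarily $c\notin H$ (else $H+c\Z=H\neq\R^d$ is already nondense so the construction below still works with any $c\notin H$). Pick a linear form $\ell$ with $\ell|_H=0$ and $\ell(c)\neq0$, and set
\[
U(x):=\cos\!\left(\tfrac{2\pi\,\ell(x)}{\ell(c)}\right).
\]
Then $U\in C_\textup{b}^\infty(\R^d)$ is nonconstant and $U(x+g)=U(x)$ for every $g\in H+c\Z\supseteq\overline{G(\supp(\mu))}$, so by Theorem \ref{thm:period} we get $\Levy^\mu[U]=0$ in $\mathcal{D}'(\R^d)$, contradicting \eqref{item:d-t-a}.

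The equivalence \eqref{item:d-t-b}$\Leftrightarrow$\eqref{item:d-t-c} is essentially a restatement of Corollary \ref{pro:group-multid} for $G=G(\supp(\mu))$, once we observe that $H+c\Z$ is itself a subgroup of $\R^d$ (since $H$ is a subspace and $-c\Z=c\Z$); hence $\supp(\mu)\subseteq H+c\Z$ is equivalent to $G(\supp(\mu))\subseteq H+c\Z$.

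For \eqref{item:d-t-bp}$\Leftrightarrow$\eqref{item:d-t-c}, note first that any $H+c\Z$ is of the form $V\oplus\Lambda$ with $V=H$ and either $\Lambda=\{0\}$ (when $c\in H$) or $\Lambda=c\Z$ with $V\cap\textup{Span}_\R\Lambda=\{0\}$ (when $c\notin H$); so \eqref{item:d-t-bp} forces \eqref{item:d-t-c}. Conversely, suppose $\supp(\mu)\subseteq V\oplus\Lambda$ with $\dim V\leq d-1$ and $V\cap\textup{Span}_\R\Lambda=\{0\}$; let $r=\textup{rank}\,\Lambda$ with basis $(a_1,\dots,a_r)$. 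If $\dim V+r\leq d-1$, enlarge $V\oplus\textup{Span}_\R\Lambda$ to any hyperplane $H$, and then $V\oplus\Lambda\subseteq H\subseteq H+c\Z$ for any $c$. Otherwise $\dim V+r=d$ and we set $H:=V\oplus\textup{Span}_\R(a_1,\dots,a_{r-1})$ (a hyperplane since the sum is direct) and $c:=a_r$, giving $V\oplus\Lambda\subseteq H+c\Z$ and hence the failure of \eqref{item:d-t-c}.

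The only genuinely delicate step will be the construction of the nonconstant periodic solution in \eqref{item:d-t-a}$\Rightarrow$\eqref{item:d-t-b}; the rest is a routine translation between the equation, the periodicity structure furnished by Theorem \ref{thm:period}, and the closed-subgroup decomposition Theorem \ref{decom:opt}.
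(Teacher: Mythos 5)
Your proposal is correct and rests on the same ingredients as the paper's proof: Theorem \ref{thm:period} gives \eqref{item:d-t-b}$\Rightarrow$\eqref{item:d-t-a}, Corollary \ref{pro:group-multid} and Theorem \ref{decom:opt} relate \eqref{item:d-t-b}, \eqref{item:d-t-bp}, \eqref{item:d-t-c}, and the cosine of a linear form vanishing on $H$ supplies the nonconstant solution for the contrapositive. Your explicit $U(x)=\cos(2\pi\ell(x)/\ell(c))$ is exactly the paper's $U(x)=\cos(2\pi\lambda_x)$ under the identification $\lambda_x=\ell(x)/\ell(c)$ coming from the splitting $\R^d=H\oplus\textup{Span}_\R\{c\}$. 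The only genuine organizational divergence is in how you handle \eqref{item:d-t-bp}$\Leftrightarrow$\eqref{item:d-t-c}: the paper treats both as simultaneous consequences of the closed-subgroup decomposition Theorem \ref{decom:opt} together with Corollary \ref{pro:group-multid} applied to $G(\supp(\mu))$, thereby cycling through density; you instead prove \eqref{item:d-t-bp}$\Leftrightarrow$\eqref{item:d-t-c} by direct linear algebra (completing $V\oplus\textup{Span}_\R\Lambda$ to a hyperplane, or setting $H=V\oplus\textup{Span}_\R(a_1,\dots,a_{r-1})$ and $c=a_r$ when the spans fill $\R^d$). This avoids one appeal to the abstract group-structure theorem and makes the inclusion of lattices into sets of the form $H+c\Z$ completely concrete, at the cost of a small case analysis; both routes are short and valid.
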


\begin{remark}\label{rem:c1}
 
\begin{enumerate}[{\rm (a)}]
\item\label{item:c1}  By Theorem \ref{thm:main}, we note that if $d=1$ then   
$$
\text{Liouville} \Longleftrightarrow \eqref{as:LiCo} \Longleftrightarrow \text{$G(\textup{supp} (\mu))$ is dense.}
$$
\item\label{item:c2}  By Remark \ref{rem:dense-discrete}, we have the following contrapositive version if $d=1$: 
$$
\text{Not Liouville} \Longleftrightarrow \eqref{as:NoLiCo} \Longleftrightarrow \mbox{$\supp (\mu) \subseteq g \Z$ for some $g > 0$.}
$$ 
This includes the trivial measure $\mu=0$ whose support is empty. 
\item In practice, it might be easier to verify either \eqref{item:d-t-bp} or \eqref{item:d-t-c} than \eqref{item:d-t-b}. In Figure \ref{fig:SeveralSupports2bisbis}, we explain the relations between  (the negated version of) these properties. 
\item If $d>1$, we do not have any algorithmic method like Corollary \ref{item:method} to verify \eqref{item:d-t-bp} and \eqref{item:d-t-c}, but we will identify more precisely the class of operators that do not satisfy these properties (cf. Section \ref{sec:cl2}).
\item Representative examples are given in Section \ref{sec:ex}.
\end{enumerate}
\end{remark}

\begin{figure}[h!]
	\centering
\includegraphics[width=0.65\textwidth]{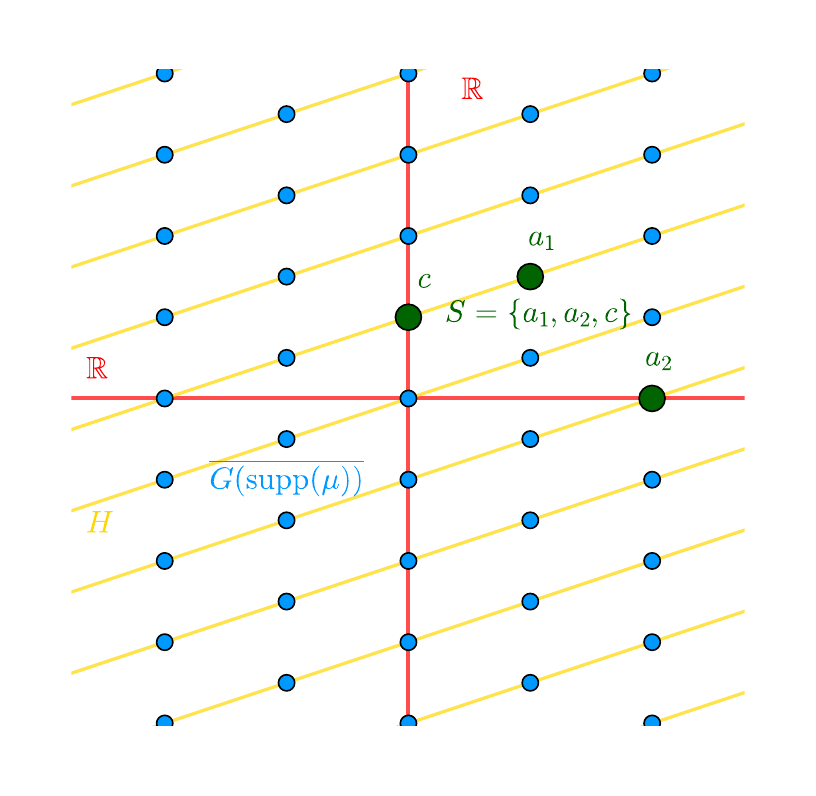}
\caption{  An example where \eqref{item:d-t-bp} and \eqref{item:d-t-c} of Theorem \ref{thm:main-d} fail. We have $\supp (\mu)=S \cup (-S)$ and $\Lambda:=\overline{G(\supp (\mu))}$ is already lattice so $\supp (\mu) \subseteq V \oplus \Lambda$ with $V=\{0\}$ and 
$\supp (\mu) \subseteq H+c \Z$ with $H$ and $c$ as in the picture. }
	\label{fig:SeveralSupports2bisbis}
\end{figure}

\begin{proof}[Proof of Theorem \ref{thm:main-d}]
 By Theorem \ref{decom:opt} and Corollary \ref{pro:group-multid}, \eqref{item:d-t-b} $\Leftrightarrow$ \eqref{item:d-t-bp} $\Leftrightarrow$\eqref{item:d-t-c}.  If now \eqref{item:d-t-b} holds then $u$ is $\R^d$-periodic by Theorem \ref{thm:period} which implies that $u$ is constant. This proves \eqref{item:d-t-b} $\Rightarrow$ \eqref{item:d-t-a}. To conclude, it suffices to prove \eqref{item:d-t-a} $\Rightarrow$ \eqref{item:d-t-c}. Let us do it by a contrapositive reasoning. If \eqref{item:d-t-c} is not true then 
\begin{equation}\label{cep}
\overline{G(\supp(\mu))} \subseteq H+ c \Z,
\end{equation} 
for some vector space $H \subseteq \R^d$ of codimension $1$ and $c \in \R^d$. We can assume that $c \notin H$ because if not we will have \eqref{cep} for any $c \notin H$. In particular,
$$
\R^d=H \oplus \textup{Span}_{\R} \{c\}
$$
and for any $x \in \R^d$ there exists a unique pair $(x_H,\lambda_x) \in H \times \R$ such that
$$
x=x_H+\lambda_x c.
$$ 
Take now 
%
$
U(x)=\cos (2\pi  \lambda_x)
$
and note that for any $h \in H$ and $k \in \Z$,
$$
x+h+ck=\underbrace{(x_H+h)}_{\in H}+\underbrace{(\lambda_x+k)}_{\in \R} c,
$$
so that 
$$
U(x+h+ck) =\cos (2\pi (\lambda_x+k))=\cos (2\pi \lambda_x)= U(x).
$$
This proves that $U$ is $H+c \Z$-periodic and thus $\overline{G(\supp (\mu))}$-periodic. By Theorem \ref{thm:period}, $\Levy^\mu[U]=0$ and the proof is complete.
\end{proof}

\subsection{The class of operators not satisfying Liouville}\label{sec:cl2}


We can have a clearer view of the operators which satisfy the following negated version of the Liouville theorem:
\begin{equation}\label{NL}
\textup{{There  is  a nonconstant $u\in L^\infty(\R^d)$ such that $\Levy^\mu[u]=0$ in $\mathcal{D}'(\R^d)$.}}
\end{equation}

 


To understand the general result in mutli--$d$, it could be useful to first explain what happens if $d=1$. 

\begin{proposition}\label{cor:consequences}
 Assume $d=1$ and \eqref{as:mus}. Then the negated version \eqref{NL} of the Liouville theorem holds if and only if  
\begin{equation}\label{gfnl1d}
\Levy^\mu[u](x) =\sum_{n \geq 1} \omega_n \big(u(x+ng)+u(x-ng)-2 u(x) \big),
\end{equation}
for some $g>0$ and $\omega_n \geq 0$ such that 
$
\sum_{n \geq 1} \omega_n<+\infty,
$
and where \eqref{gfnl1d} can be understood in $\mathcal{D}'(\R)$ or equivalently as a normally convergent serie in $L^\infty(\R)$.
\end{proposition}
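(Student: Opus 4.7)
The plan is to combine Theorem~\ref{thm:main} with Corollary~\ref{cor:atomicMes} for the direction \eqref{NL}~$\Rightarrow$~\eqref{gfnl1d}, exhibit an explicit periodic solution for the converse, and then verify the stated equivalence of the two readings of the series. The deep work has already been done in the $1$--$d$ analysis; what remains is essentially bookkeeping.

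\textbf{From \eqref{NL} to the form \eqref{gfnl1d}.} Suppose there is a nonconstant bounded $u$ with $\Levy^\mu[u] = 0$ in $\mathcal{D}'(\R)$. By the contrapositive of Theorem~\ref{thm:main}, assumption \eqref{as:NoLiCo} holds, so Corollary~\ref{cor:atomicMes} yields some $g > 0$ with $\supp(\mu) \subseteq g\Z$. Then $\mu$ is concentrated on the countable set $g\Z \setminus \{0\}$, and Lemma~\ref{lem:discrete-meas} together with the symmetry built into \eqref{as:mus} lets me pair atoms at $\pm ng$ to write
\[
\mu = \sum_{n \geq 1} \omega_n \bigl(\delta_{ng} + \delta_{-ng}\bigr), \qquad \omega_n := \mu(\{ng\}) \geq 0.
\]
Each $\omega_n$ is finite because $\mu$ is Radon, and the L\'evy integrability in \eqref{as:mus} becomes $\sum_{n \geq 1}(|ng|^2 \wedge 1)\omega_n < +\infty$. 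Only the finitely many indices with $ng < 1$ carry the damping factor $|ng|^2$, while all larger indices contribute the full $\omega_n$; combined with the finiteness of each individual $\omega_n$ for the small indices, this forces $\sum_{n \geq 1} \omega_n < +\infty$, giving exactly the form \eqref{gfnl1d}.

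\textbf{From \eqref{gfnl1d} to \eqref{NL}.} Conversely, suppose $\mu = \sum_{n \geq 1} \omega_n(\delta_{ng} + \delta_{-ng})$ with $\sum \omega_n < +\infty$. Then $\supp(\mu) \subseteq g\Z$, and the smooth bounded nonconstant function $U(x) := \cos(2\pi x / g)$ satisfies $U(x + ng) = U(x)$ for every $n \in \Z$. Hence $\Levy^\mu[U] \equiv 0$ pointwise on $\R$, and in particular $\Levy^\mu[U] = 0$ in $\mathcal{D}'(\R)$, establishing \eqref{NL}.

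\textbf{The two interpretations of \eqref{gfnl1d}.} For any $v \in L^\infty(\R)$, each summand $\omega_n(v(\cdot + ng) + v(\cdot - ng) - 2v(\cdot))$ has sup norm at most $4\|v\|_\infty \omega_n$, so the series $T[v] := \sum_{n \geq 1} \omega_n(v(\cdot + ng) + v(\cdot - ng) - 2v(\cdot))$ converges normally in $L^\infty(\R)$. Applied to a test function $\psi \in C_c^\infty(\R)$, the same expression gives $\Levy^\mu[\psi] = T[\psi] \in L^1(\R) \cap L^\infty(\R)$ (with $L^1$--norm bounded by $4\|\psi\|_{L^1} \sum \omega_n$), and Fubini yields
\[
\int_\R u(x)\,\Levy^\mu[\psi](x)\dd x = \int_\R T[u](x)\,\psi(x)\dd x \qquad \forall u \in L^\infty(\R).
\]
Thus $\Levy^\mu[u] = 0$ in $\mathcal{D}'(\R)$ is equivalent to $T[u] = 0$ almost everywhere, reconciling the two readings of \eqref{gfnl1d}. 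I anticipate no genuine obstacle in this proof: the only points requiring care are the bookkeeping separating the finitely many indices with $ng < 1$ in order to derive $\sum \omega_n < +\infty$, and the Fubini step that matches the distributional with the normally-convergent $L^\infty$ interpretation.
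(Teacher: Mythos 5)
Your argument is correct and follows the paper's own (implicit) route: the paper defers the proof to the arguments in Proposition~\ref{prop:counterExample}, which rest on exactly the same ingredients you use, namely Theorem~\ref{thm:main} (equivalently its contrapositive to pass from \eqref{NL} to \eqref{as:NoLiCo}), Corollary~\ref{cor:atomicMes} to get $\supp(\mu)\subseteq g\Z$, Lemma~\ref{lem:discrete-meas} with Remark~\ref{rem:decom} to write $\mu=\sum_n\omega_n(\delta_{ng}+\delta_{-ng})$, and the explicit $g$-periodic function $\cos(2\pi x/g)$ for the converse. Your bookkeeping for $\sum\omega_n<\infty$ and the Fubini reconciliation of the distributional and normally-convergent readings are the details the paper chose to omit, and both are correct.
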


\begin{remark}
The largest possible $g > 0$ for which we can write $\Levy^\mu$ in the form \eqref{gfnl1d} is uniquely determined as the generator of $G(\supp (\mu))$. The weights are also entirely determined as $\omega_n=\mu(\{ng\})$ (cf. Lemma \ref{lem:discrete-meas} and Remark \ref{rem:decom}). 
\end{remark}

We omit the detailed verification of Proposition \ref{cor:consequences} because it is in fact an immediate consequence of the arguments already used in the proof of Proposition \ref{prop:counterExample}. Let us focus instead on the general case $d \geq 1$ where we may have series of more general measures than Dirac masses.

\begin{theorem}\label{thm:consequences-d}
Assume $d \geq 1$ and \eqref{as:mus}. Then $\Levy^\mu$  satisfies  \eqref{NL} if and only if it is of the following form:
\begin{equation}\label{gfnl}
\begin{split}
\Levy^\mu[u](x) & = \textup{P.V.} \int_{z \in V \setminus \{0\}} \big(u(x+z)-u(x) \big) \dd \mu_0(z)\\
& \quad + \sum_{a \in \Lambda \setminus \{0\}} \int_{z \in V+a} \big(u(x+z)-u(x) \big) \dd \mu_a(z),
\end{split}
\end{equation}
for some vector space $V \subseteq \R^d$, relative lattice $\Lambda \subseteq \R^d$, and family $\{\mu_a\}_{a \in \Lambda}$ of Radon measures on $\R^d \setminus \{0\}$ such that
\begin{align}
& \label{direct-sum} V \neq \R^d \quad \text{and} \quad V \cap \textup{Span}_{\R} \Lambda=\{0\},\\[1ex]
&\label{levy-d} \mu_a\geq 0 \quad \text{and} \quad \mu_{a}(\cdot)=\mu_{-a}(-\cdot),\\[1ex]
&\label{support-d} \supp (\mu_a) \subseteq V+a, \quad \text{and}\\
&  \label{levy-d-arxiv} \int (|z|^2 \wedge 1) \dd \mu_0(z)+\sum_{a \in \Lambda \setminus \{0\}} \mu_a(\R^d)<+\infty,
\end{align} 
where \eqref{gfnl} has to be understood in $\mathcal{D}'(\R^d)$ and \eqref{levy-d}--\eqref{support-d} hold for all $a \in \Lambda$.
\end{theorem}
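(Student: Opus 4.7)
The plan is to prove both directions using Theorem \ref{thm:main-d} together with the structure theorem for closed subgroups of $\R^d$ (Theorem \ref{decom:opt}).

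\textbf{Sufficiency of \eqref{gfnl}.} Assume that $\Levy^\mu$ has the form \eqref{gfnl}--\eqref{levy-d-arxiv}. Condition \eqref{support-d} immediately gives $\supp(\mu) \subseteq \bigcup_{a \in \Lambda}(V+a) \subseteq V \oplus \Lambda$. By Theorem \ref{decom:opt}, $V \oplus \Lambda$ is a closed subgroup of $\R^d$ whose vector part is exactly $V$, so by \eqref{direct-sum} it is a proper subgroup. Consequently $\overline{G(\supp(\mu))} \subseteq V \oplus \Lambda \neq \R^d$, and Theorem \ref{thm:main-d} yields the failure of Liouville.

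\textbf{Necessity of \eqref{gfnl}.} Assume \eqref{NL}. By Theorem \ref{thm:main-d}, the closed subgroup $G := \overline{G(\supp(\mu))}$ is strictly smaller than $\R^d$, and Theorem \ref{decom:opt} provides a decomposition $G = V \oplus \Lambda$ with $V$ a vector subspace (necessarily $V \neq \R^d$), $\Lambda$ a relative lattice, and $V \cap \textup{Span}_\R \Lambda = \{0\}$. In particular $V \cap \Lambda = \{0\}$, so the affine pieces $\{V+a\}_{a \in \Lambda}$ are pairwise disjoint and I would define
\[
\mu_a := \mu\big|_{V+a}, \quad a \in \Lambda,
\]
so that $\mu = \sum_{a \in \Lambda} \mu_a$ because $\supp(\mu) \subseteq G$. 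Properties \eqref{support-d} hold by construction; the symmetry \eqref{levy-d} follows from $-(V+a) = V + (-a)$ together with the symmetry of $\mu$.

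\textbf{The main obstacle is verifying \eqref{levy-d-arxiv}.} For $\mu_0$ this is immediate since $\mu_0 \leq \mu$ and \eqref{as:mus} holds. For the sum over $a \neq 0$, I would denote $W := \textup{Span}_\R \Lambda$ and exploit continuity of the linear projection $V \oplus W \to W$ along $V$: this yields a constant $C > 0$ with $|a| \leq C \cdot \textup{dist}(a,V)$ for every $a \in W$. Combined with the discreteness of $\Lambda$ in $\R^d$ (Lemma \ref{lem:lattice}), this shows that only finitely many $a \in \Lambda \setminus \{0\}$ satisfy $\textup{dist}(a,V) \leq 1$. For each such $a$, $\mu_a(\R^d) \leq \mu(\{|z| \geq \textup{dist}(a,V)\}) < \infty$ by \eqref{as:mus}; the remaining pieces are supported in $\{|z| > 1\}$ and their disjoint union carries total mass at most $\mu(\{|z|>1\}) < \infty$.

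\textbf{Conclusion.} Formula \eqref{gfnl} is then obtained by splitting the integral defining $\Levy^\mu$ across the disjoint cosets $\{V+a\}_{a \in \Lambda}$: the $a=0$ piece retains the principal value (it is the only piece intersecting every neighborhood of the origin), while the $a \neq 0$ pieces converge absolutely thanks to \eqref{levy-d-arxiv} and the uniform bound $|u(x+z)-u(x)| \leq 2\|u\|_\infty$. For general $u \in L^\infty(\R^d)$ the identity passes from smooth $\psi$ via the distributional definition \eqref{eq:DistEqual0} and Fubini, which is legitimate because the bound $\sum_{a \neq 0}\mu_a(\R^d) + \int(|z|^2 \wedge 1)\,\mathrm{d}\mu_0(z) < \infty$ allows absolute exchange of the sum and the spatial integration against $\psi$.
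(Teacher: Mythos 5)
Your proof is correct and follows essentially the same route as the paper: both directions hinge on Theorems \ref{thm:main-d} and \ref{decom:opt}, the measures $\mu_a$ are built as restrictions $\mu(\cdot\cap(V+a))$, symmetry follows from $-(V+a)=V-a$, and the key finiteness \eqref{levy-d-arxiv} is deduced from a uniform positive lower bound on $\textup{dist}(a,V)$ over $a\in\Lambda\setminus\{0\}$. The only (minor) difference is in how that uniform bound is turned into summability: the paper compares $(\varepsilon_a^2\wedge 1)\mu_a(\R^d)$ directly with $\int(|z|^2\wedge1)\dd\mu_a$ and invokes $\inf_{a\neq0}\varepsilon_a>0$ (proved by a Bolzano--Weierstrass contradiction), whereas you obtain the bound $|a|\le C\,\textup{dist}(a,V)$ from boundedness of the algebraic projection onto $\textup{Span}_\R\Lambda$ along $V$, then split $\Lambda\setminus\{0\}$ into finitely many near cosets and a far remainder of total $\mu$-mass at most $\mu(\{|z|>1\})$ — an equivalent but slightly tidier way to close the estimate.
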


\begin{remark}
\begin{enumerate}[{\rm (a)}]
\item A representative example with $V=H$ of codimension $1$ is given in Figure \ref{fig:SeveralSupports2bis}.
\item In \eqref{gfnl}, the L\'evy measure of $\Levy^\mu$ satisfies $\mu=\sum_{a \in \Lambda} \mu_a$ where $\Lambda$ is a relative lattice thus a countable set (cf. Remark \ref{rem:lattice}\eqref{item:countable}).
\item By  \eqref{direct-sum} and  \eqref{support-d}, $\supp (\mu) \subseteq V\oplus \Lambda=\cup_{a \in \Lambda} (V+a)$ which is a periodic superposition of parrallel affine subspaces $V+a$ of dimension $d_V<d$ (cf. Figure \ref{fig:SeveralSupports2bis}).
\item Each $\mu_a$ is supported by the affine space $V+a \approx \R^{d_V}$, and the theorem is written in such a way that some $\mu_a$ could be the zero measure. In particular, $\supp (\mu)$ may be just a subset of $V\oplus\Lambda$.
\item The triplet $(V,\Lambda,\{\mu_a\}_{a \in \Lambda})$ in the decomposition \eqref{gfnl} is however unique if we require also to have  
$
\overline{G(\supp (\mu))}=V \oplus \Lambda$ with $V \perp \Lambda
$
(cf. Corollary \ref{cor:uniq} in Appendix \ref{sec:uniq}).
\item By \eqref{levy-d}, the measure $\mu_0$ is symmetric and $\mu=\sum_{a \in \Lambda} \mu_a$ is as well although every other $\mu_a$ may not be symmetric (cf. Figure \ref{fig:SeveralSupports2bis}). 
\item By \eqref{levy-d-arxiv}, every $\mu_a$ are bounded except eventually $\mu_0$ which is the only measure whose support may contain the singularity $z=0$ (cf. Figure \ref{fig:SeveralSupports2bis}). 
\item For further representative examples of operators than the one in Figure \ref{fig:SeveralSupports2bis}, see Section \ref{sec:ex}.
\end{enumerate}
\end{remark}

\begin{figure}[h!]
	\centering
\includegraphics[width=0.65\textwidth]{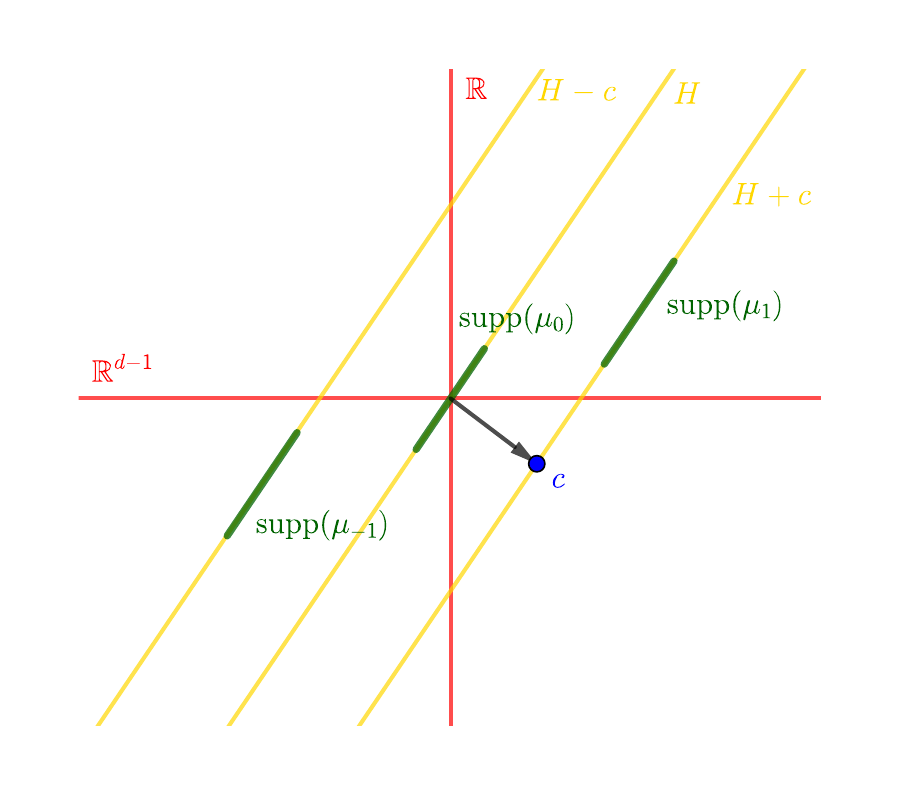}
\caption{ Example of decomposition of $\Levy^\mu$ as in \eqref{gfnl} with $V=H$ of codimension $1$, $\Lambda=c \Z$, and $\mu_{\pm 1}=\mu_{\pm c}$.} 
	\label{fig:SeveralSupports2bis}
\end{figure}

%

\begin{proof}[Proof of Theorem \ref{thm:consequences-d}]
Assume first that \eqref{gfnl} holds and let us prove \eqref{NL}. Recall that $\mu=\sum_{a \in \Lambda} \mu_a$ by \eqref{gfnl} and $\supp (\mu) \subseteq V \oplus \Lambda$ by \eqref{support-d}, where 
$
V \cap \textup{Span}_\R \Lambda=\{0\}
$
by \eqref{direct-sum}.
By Theorem \ref{thm:main-d}\eqref{item:d-t-bp}, it suffices to have $V \neq \R^d$ to get the negated version \eqref{NL} of the Liouville theorem, but this is assumed in  \eqref{direct-sum} which completes the proof of the  if  part.

Assume now that \eqref{NL} holds and let us prove that $\Levy^\mu$ is of the form \eqref{gfnl}. By Theorem \ref{decom:opt}, 
$$
\overline{G(\supp (\mu))}=
V\oplus\Lambda
$$ 
for some vector subspace $V \subseteq \R^d$ and relative lattice $\Lambda\subseteq \R^d$ such that 
\begin{equation}\label{lattice-detail}
V \cap \textup{Span}_\R \Lambda=\{0\}.
\end{equation}
By Theorem \ref{thm:main-d} and \eqref{NL}, $V \oplus \Lambda \neq \R^d$. It follows that $V \neq \R^d$ and  this  proves \eqref{direct-sum}. It only remains to construct $\mu_a$ satisfying \eqref{gfnl} and \eqref{levy-d}--\eqref{levy-d-arxiv}. We will need to show that for any $a,b \in \Lambda$ such that $a \neq b$,
\begin{equation}\label{lattice-detail-bis}
(V+a) \cap (V+b) = \emptyset.
\end{equation}
To prove \eqref{lattice-detail-bis}, assume that $x \in (V+a) \cap (V+b)$ and let us find a contradiction. In that case, $x=v+a=w+b$ with $v,w \in V$ and by \eqref{lattice-detail},
$$
v-w=a-b \in V \cap   \Lambda =\{0\}. 
$$  
This gives us the contradiction that $a=b$ and shows \eqref{lattice-detail-bis}. Now for any $B$,
\begin{equation*}
\begin{split}
\mu(B)
&=\mu \big(B \cap (V\oplus \Lambda)\big) \quad \mbox{(since $\supp (\mu) \subseteq V\oplus \Lambda$)}\\
& =\mu \Big(B \cap \big(\cup_{a \in \Lambda} (V+a)\big)\Big)\\
& = \sum_{a \in \Lambda} \underbrace{\mu \big(B \cap (V+a)\big)}_{:=\mu_a(B)}.
\end{split}
\end{equation*} 
The measures $\mu_a(\cdot)=\mu(\cdot \cap (V+a))$ satisfy \eqref{gfnl} and \eqref{support-d}. They are also nonnegative since $\mu \geq 0$, and by the symmetry of $\mu$,
\begin{equation*}
\begin{split}
\mu_a(B)&=\mu \big(B \cap (V+a)\big)=\mu \big(-(B \cap (V+a)) \big)\\
&=\mu \big( (-B) \cap (-V-a)\big)=\mu_{-a}(-B)
\end{split}
\end{equation*}
since $V=-V$. This proves 
\eqref{levy-d}. Using finally \eqref{as:mus},
\begin{equation}\label{very-last}
\sum_{a \in \Lambda}\int (|z|^2 \wedge 1) \dd \mu_a(z)=\int (|z|^2 \wedge 1) \dd \mu(z)<+\infty
\end{equation}
where for any $a \neq 0$,
$$
\int (|z|^2 \wedge 1) \dd \mu_a(z) \geq (\varepsilon_a^2 \wedge 1 \Big) \mu_k(\R^d)
$$
with the distance $\varepsilon_a:=\textup{dist}(0,V+a)$. The affine space $V+a$ does not contain $0$ if $a \neq 0$ which implies that $\varepsilon_a>0$. To be more precise, we have
$$
\varepsilon_a=|0-\textup{proj}_{V+a}(0)|=|\textup{proj}_{V+a}(0)|,
$$
with the orthogonal projection onto $V+a$. Or equivalently, 
$$
\varepsilon_a=|\textup{proj}_{V+a}(0)|=|a-\textup{proj}_{V}(a)|.
$$
If we can prove that 
\begin{equation}\label{detail-lattice-last}
\inf_{a \in \Lambda \setminus \{0\}} \varepsilon_a>0,
\end{equation}
we will obtain the last property \eqref{levy-d-arxiv} from \eqref{very-last} and complete the proof. 

 But \eqref{detail-lattice-last} is rather standard (cf. e.g. \cite{Mar03}). Let us give details for completeness.  We use that $\Lambda$ is discrete by Lemma \ref{lem:lattice} to infer that there is some $B_{\varepsilon}(0)$ with $\varepsilon>0$ such that 
$
B_{\varepsilon}(0) \cap \Lambda=\{0\}.
$
Let us now assume by contradiction that \eqref{detail-lattice-last} does not hold. This will imply the existence of some $\{a_n\}_{n} \subseteq \Lambda \cap B_{\varepsilon}(0)^c$ such that $|a_n-\textup{proj}_{V}(a_n)| \to 0$ as $n \to +\infty$. Note then that $b_n:=a_n/|a_n|$ converges to some $\overline{b} \neq 0$ along a subsequence which satisfies
$$
|b_n-\textup{proj}_{V}(b_n)| = \frac{|a_n-\textup{proj}_{V}(a_n)| }{|a_n|} \leq \frac{|a_n-\textup{proj}_{V}(a_n)| }{\varepsilon} \to 0
$$
as $n \to +\infty$. The limit then satisfies
$$
0\neq \overline{b}=\textup{proj}_{V}(\overline{b}) \in V \cap \textup{Span}_\R \Lambda =\{0\},$$ 
which is a contradiction with \eqref{lattice-detail} and completes the proof.
\end{proof}

\section{Representative examples of multi--$d$ operators }\label{sec:ex}

Let us illustrate Theorems \ref{thm:main-d} and \ref{thm:consequences-d} with model operators having or \textit{not} having the Liouville property. 


\subsection{Nonatomic measures and Liouville}
\label{sec:exunboundedMultiD} 

If $d=1$ the Liouville theorem can fail only for atomic measures (cf.  Proposition \ref{cor:consequences}).  In multi--$d$, this is slightly different.

\begin{figure}[h!]
	\centering
	\subfigure[$\supp(\mu)$ contains a ball. The Liouville property {\em holds}.]{\includegraphics[width=0.45\textwidth]{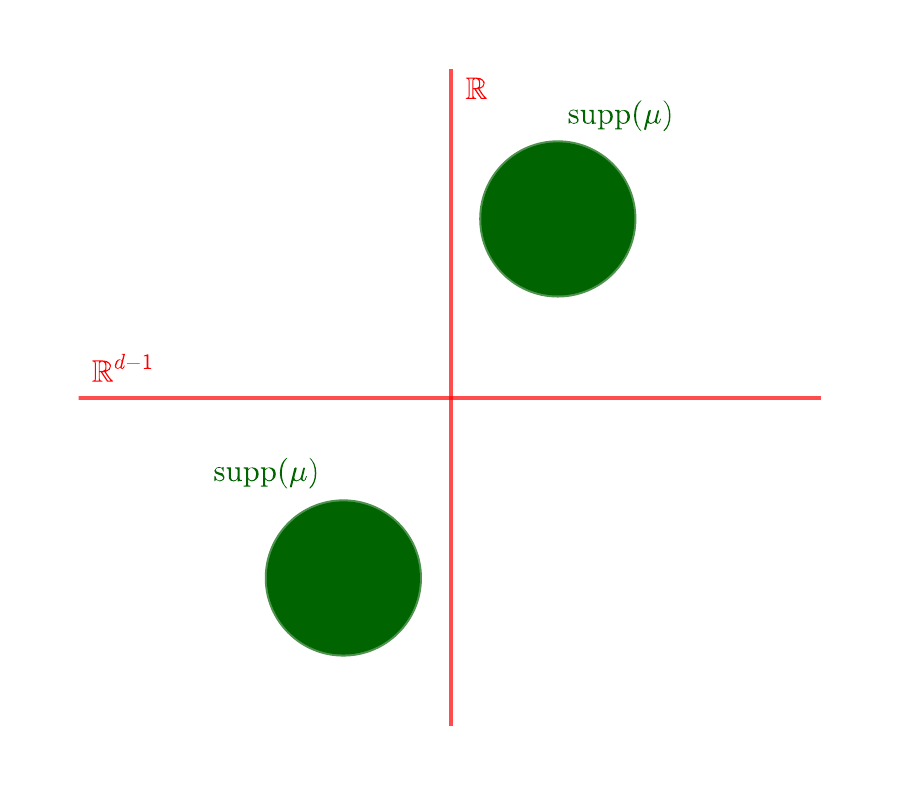}}
\hspace{0.25cm}
	\subfigure[$\supp(\mu)=\R^{d-1}\times\{0\}$. The Liouville property {\em fails}.]{\includegraphics[width=0.45\textwidth]{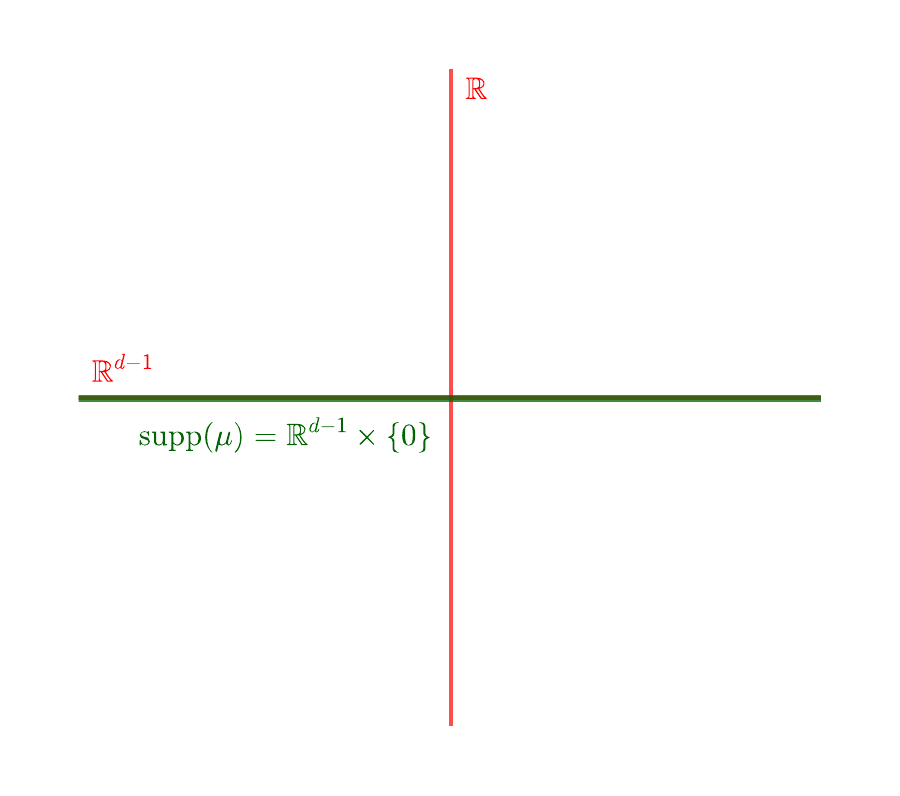}}
	\hspace{1cm}
	\subfigure[$\supp(\sigma)=\partial B_1(0)$. The Liouville property {\em holds}.]{\includegraphics[width=0.45\textwidth]{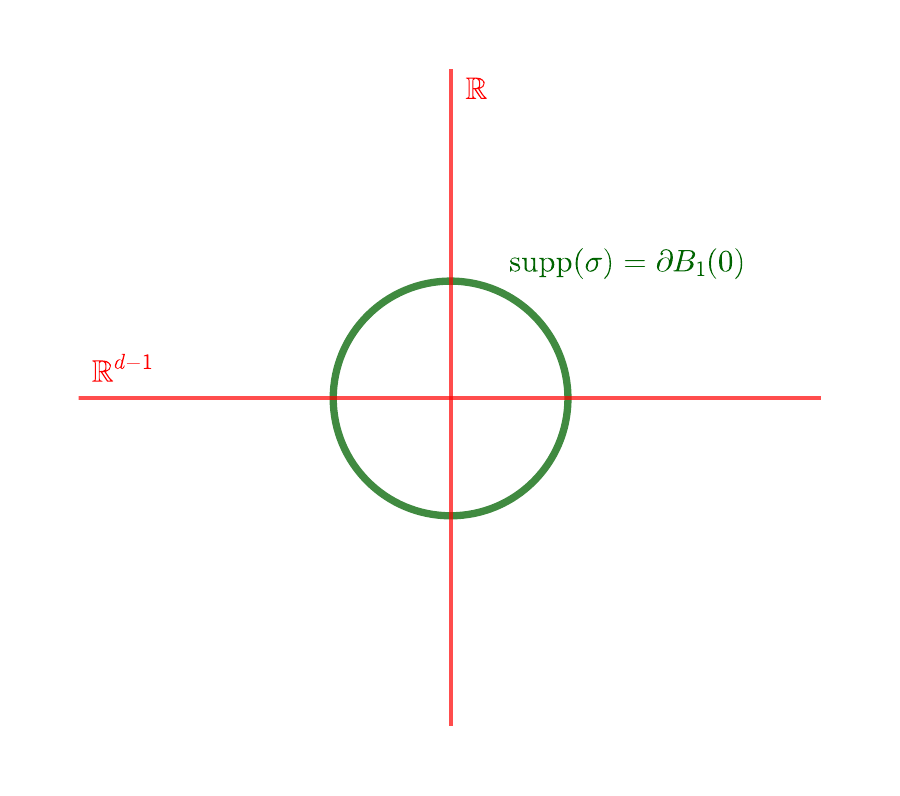}}
	\caption{ Supports satisfying or \textit{not} satisfying Thm. \ref{thm:main-d}\eqref{item:d-t-c}.}
	\label{fig:SeveralSupports}
\end{figure}


\begin{example}[Absolutely continuous measures satisfy Liouville]
If  the support of $\mu$ contains a ball  (cf. Figure \ref{fig:SeveralSupports}{\color{red}(a)}), then the Liouville theorem {\em holds} by Theorem \ref{thm:main-d}\eqref{item:d-t-c}. This is e.g. the case  for absolutely continuous measures like the fractional Laplacian, etc.
\end{example}

{ \begin{example}[Diffuse measures: Case 1]\label{ex:diff}
Consider now the non absolutely continuous diffuse measure associated to $\Levy^\mu=-(-\Delta_{d-1})^{\frac{\alpha}{2}}$ where $\Delta_{d-1}=\sum_{i=1}^{d-1} \partial_{x_i}^2$. It is of the form
\[ 
\dd \mu(z)= f(z) \dd z_1 \ldots \dd z_{d-1}\dd \delta_0(z_d) \quad  \text{with} \quad f(z)= c_{d,\alpha}\frac{\mathbf{1}_{\{z_d=0\}}}{|z|^{d-1+\alpha}}.
\]
Since $\supp (\mu)=H + c\Z$ with $H=\R^{d-1}$ and $c=0$, the Liouville theorem {\em fails} by Theorem \ref{thm:main-d}\eqref{item:d-t-c} (cf. Figure \ref{fig:SeveralSupports}{\color{red}(b)}). 
\end{example}

\begin{remark}[Unbounded measures at $z=0$]
In Example \ref{ex:diff}, $\mu$ is unbounded where as the Liouville theorem \textit{fails} for $\Levy^\mu$. This may occur only if $d>1$. Indeed,  by   Proposition \ref{cor:consequences},  every $1$--$d$ L\'evy  measure  for which the Liouville theorem fails should be purely atomic \textit{and} bounded up to $z=0$.
\end{remark}

\begin{example}[Diffuse measures: Case 2] 
\label{ex:surfaceMultiD} 
Let us consider another non absolutely continuous diffuse measure given by the mean value operator   
$$
\mathcal{M}[u](x)=\frac{1}{c_d} \int_{\partial B_1(0)} \big(u(x+z)-u(x)\big) \dd \sigma(z),
$$
 where $c_d$ is the area of $\partial B_1(0)$  (and $\dd \sigma(z)$ the surface measure supported by the unit ball).  The Liouville theorem {\em holds} by Theorem \ref{thm:main-d}\eqref{item:d-t-c} (cf. Figure \ref{fig:SeveralSupports}{\color{red}(c)}).
\end{example}

\begin{remark}[Classical Liouville]
If $\Delta u=0$ in $\R^d$ then it is well-known that $\mathcal{M}[u]=0$ in $\R^d$ with the operator from Example \ref{ex:surfaceMultiD}. In particular, the Liouville theorem for $\mathcal{M}$ implies the classical Liouville theorem for $\Delta$.
\end{remark}

\subsection{Discrete measures and the Kronecker theorem}
Let us  consider atomic measures with the minimal possible number of points in the support to have Liouville. By parts \eqref{item:d-t-bp} and \eqref{item:d-t-c} of Theorem \ref{thm:main-d}, we have to avoid the cases where 
$$
{{\overline{G(\supp (\mu))} = \Lambda}} \quad \mbox{or} \quad \overline{G(\supp (\mu))}  \subseteq H,
$$ 
for some (full) lattice $\Lambda$ of $\R^d$ or some vector space $H$ of codimension $1$. This typically occurs for  
$$
\mu(z)=\sum_{n=1}^N  (\delta_{a_n}(z)+ \delta_{-a_n}(z)),
$$
where $(a_1,\dots,a_{N=d})$ is a basis of $\R^d$ or  $\textup{Span}_\R \{a_1,\dots,a_N\} \neq \R^d$.  The minimal situation to consider to have Liouville is thus when $\supp (\mu)$ contains at least $d+1$ points $a_1,\dots,a_{d+1}$ such that 
$$
\textup{Span}_{\R}\{a_1,\dots,a_{d+1}\}=\R^d. 
$$
By changing coordinates, it is sufficient to understand this situation for 
\begin{equation}\label{Kronecker-meas}
\mu(z)=\delta_c(z) +\delta_{-c}(z)  +\sum_{i=1}^d  (\delta_{e_i}(z)+ \delta_{-e_i}(z)),
\end{equation}
where $c \neq0$ and $(e_1,\dots,e_d)$ is the canonical basis. In that case
$$
G(\supp (\mu))=c \Z+\Z^d, 
$$
and the Kronecker theorem (cf. e.g. page 507 in  \cite{GoMo16})) provides a necessary and sufficient condition to have $\overline{c \Z+\Z^d}=\R^d$. Let us recall this result. 

\begin{theorem}[Kronecker's approximation]\label{thm:CharKron}
For any $c=(c_1,\dots,c_d)\in \R^d$, we have $\overline{c \Z+\Z^d}=\R^d$ if and only if $\{1,c_1,\dots,c_d\}$ is linearly independent over $\Q$. 
\end{theorem}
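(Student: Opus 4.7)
The plan is to prove both implications by contrapositive, using Corollary~\ref{pro:group-multid} to convert density of $G:=c\Z+\Z^d$ into a statement about codimension-$1$ hyperplanes, and then translating such hyperplane inclusions into $\Q$-linear relations among $\{1,c_1,\dots,c_d\}$ via evaluation of a single linear functional.

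For the easy direction, I would assume $\{1,c_1,\dots,c_d\}$ is $\Q$-linearly dependent, clear denominators to obtain integers $n_0,n_1,\dots,n_d$, not all zero, with $n_0+n_1c_1+\cdots+n_dc_d=0$, and note that if $(n_1,\dots,n_d)=0$ then $n_0=0$ as well, so in fact $n:=(n_1,\dots,n_d)\neq 0$. The continuous linear functional $\varphi(x):=n\cdot x$ then maps $G$ into $\Z$ (because $\varphi(c)=-n_0\in\Z$ and $\varphi(\Z^d)\subseteq\Z$), hence maps $\overline{G}$ into the closed set $\Z$, preventing $\overline{G}$ from being all of $\R^d$.

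The harder direction is also by contrapositive. Suppose $\overline{G}\neq \R^d$, so $G$ is not dense. By Corollary~\ref{pro:group-multid}, $G\subseteq H+c'\Z$ for some codimension-$1$ subspace $H$ and some $c'\in\R^d$. Since $\Z^d\subseteq G$ and $\Z^d$ spans $\R^d$ over $\R$, the assumption $c'\in H$ would give the contradiction $\R^d\subseteq H$; hence $c'\notin H$. Let $\varphi$ be the unique linear functional with $\ker\varphi=H$ and $\varphi(c')=1$, so that $\varphi(G)\subseteq\Z$. Writing $\lambda_i:=\varphi(e_i)$, the inclusion $\Z^d\subseteq G$ forces $\lambda_i\in\Z$ for every $i$, and $c\in G$ forces $\sum_{i=1}^d\lambda_ic_i=\varphi(c)\in\Z$. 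Since $\varphi\neq 0$, at least one $\lambda_i$ is nonzero, so $\sum_{i=1}^d\lambda_i c_i-\varphi(c)\cdot 1=0$ is a nontrivial integer (and hence rational) relation among $\{1,c_1,\dots,c_d\}$.

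The main obstacle is the harder implication, where the geometric inclusion ``$G$ sits inside a translated hyperplane'' must be repackaged as an arithmetic dependence. The normalisation $\varphi(c')=1$ is the key device: it turns $G\subseteq H+c'\Z$ into the clean statement $\varphi(G)\subseteq\Z$, after which evaluating $\varphi$ on the generators $e_1,\dots,e_d,c$ of $G$ produces the required $\Q$-linear relation essentially for free.
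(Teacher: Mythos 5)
Your proof is correct. Note that the paper does \emph{not} supply its own proof of Theorem~\ref{thm:CharKron}: it states the Kronecker approximation theorem and cites \cite{GoMo16} (page 507). So there is no argument in the paper to compare against; what you have produced is a self-contained proof, which is a genuine addition.

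Your approach is also well adapted to the surrounding material. Instead of invoking the classical simultaneous-approximation formulation of Kronecker's theorem, you derive both directions from Corollary~\ref{pro:group-multid} (itself a consequence of the structure theorem \ref{decom:opt} for closed subgroups), and there is no circularity since neither of those results depends on Kronecker. The key device in the harder direction --- normalising the linear functional so that $\varphi(c')=1$ to turn the geometric containment $G\subseteq H+c'\Z$ into the arithmetic statement $\varphi(G)\subseteq\Z$, then evaluating $\varphi$ on the generators $e_1,\dots,e_d,c$ --- is clean, and the nontriviality of the resulting relation is correctly justified by $\varphi\neq 0\Rightarrow\lambda_i\neq 0$ for some $i$. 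The easy direction via a dual functional sending $G$ into $\Z$ is the standard argument and is handled correctly, including the observation that one of $n_1,\dots,n_d$ must be nonzero. In short: correct proof, consistent with the paper's machinery, filling in a step the paper delegates to a reference.
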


\begin{corollary}\label{coro:CharKron}
The measure $\mu$ in \eqref{Kronecker-meas} satisfies the Liouville property if and only if $\{1,c_1,\dots,c_d\}$ is linearly independent over $\Q$.
\end{corollary}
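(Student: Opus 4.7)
The plan is that this corollary is essentially a concatenation of two results already at hand: Theorem \ref{thm:main-d}, which reduces Liouville to density of $G(\supp(\mu))$ in $\R^d$, and Kronecker's approximation Theorem \ref{thm:CharKron}, which gives the arithmetic criterion for density of $c\Z + \Z^d$. So my strategy is simply to identify $G(\supp(\mu))$ explicitly and then chain the two equivalences.

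First I would verify that $\mu$ satisfies the standing hypothesis \eqref{as:mus}. This is immediate: $\mu$ is a finite sum of Dirac masses supported on $\{\pm c, \pm e_1, \dots, \pm e_d\}\subseteq \R^d\setminus\{0\}$ (recall $c\neq 0$), it is symmetric by construction, and $\int(|z|^2\wedge 1)\,\mathrm{d}\mu(z) < +\infty$ trivially since the support is finite and avoids the origin.

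Next I would compute $G(\supp(\mu))$. Since $\supp(\mu) = \{\pm c,\pm e_1,\dots,\pm e_d\}$, the subgroup generated by this set consists of all integer linear combinations of its elements, i.e.
\[
G(\supp(\mu)) \;=\; c\Z + e_1\Z + \cdots + e_d\Z \;=\; c\Z + \Z^d.
\]
This is a direct group-theoretic computation with no subtlety.

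Finally I would invoke the equivalence \eqref{item:d-t-a}$\Leftrightarrow$\eqref{item:d-t-b} of Theorem \ref{thm:main-d}, which asserts that $\Levy^\mu$ satisfies Liouville iff $\overline{G(\supp(\mu))} = \R^d$. Combining this with the identification above, Liouville holds iff $\overline{c\Z + \Z^d} = \R^d$. An application of Theorem \ref{thm:CharKron} then translates this density into the stated condition that $\{1,c_1,\dots,c_d\}$ is linearly independent over $\Q$, completing the proof.

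There is no real obstacle here: all the substantive work lies in Theorems \ref{thm:main-d} and \ref{thm:CharKron}, both of which are available. The only step to be careful about is keeping track of what the support of $\mu$ is (finite and symmetric), so that the computation of the generated subgroup is transparent and the standing assumption \eqref{as:mus} is satisfied.
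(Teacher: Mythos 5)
Your proposal is correct and follows exactly the route the paper takes: the paper's proof consists of the single line ``Use Theorems \ref{thm:main-d} and \ref{thm:CharKron}.'' You simply spell out the intermediate step $G(\supp(\mu)) = c\Z + \Z^d$ and the verification of \eqref{as:mus}, which is exactly what the paper leaves implicit.
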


\begin{proof}
Use Theorems \ref{thm:main-d} and \ref{thm:CharKron}.
\end{proof}

\begin{remark}
\begin{enumerate}[(a)]
\item In $1$--$d$, we recover that $\mu=\sum_{\pm} (  \omega_1  \delta_{\pm a}+  \omega_2  \delta_{\pm b})$ satisfies the Liouville property if and only if $\frac{b}{a} \notin \Q$.
\item   The condition that $\{1,c_1,\dots,c_d\}$ is linearly independent over $\Q$  is  related  to Diophantine equations (cf. \cite{GoMo16}).   
\end{enumerate}
\end{remark}


\subsection{Finite difference operators} 
\label{sec:exfinitediffMultiD}
 Let us now give some examples coming from numerical methods.
\begin{example}
Let $(e_1,\ldots,e_d)$ be the  canonical  basis in $\R^d$ and $h>0$. Consider the discrete Laplacian 
\begin{equation*}
\Delta_h u(x)= \sum_{i=1}^d \frac{u(x+h e_i)+u(x-h e_i)-2u(x)}{h^2},
\end{equation*}
where $\mu(z)= \sum_{i=1}^d (\delta_{h e_i}(z)+\delta_{-he_i}(z))h^{-2}$. By Theorem \ref{thm:main-d}\eqref{item:d-t-bp}, the Liouville theorem {\em fails} since $G(\supp (\mu))=h\Z^d$ is a lattice.
\end{example}

To get  discrete  operators satisfying Liouville, we need more nodes and they must be  defined on  nonuniform  grids  as in the following example:

\begin{example}\label{ex:discLapMultiDbis}
 Let $h,\rho>0$.  The following operator is a nonstandard discretization of the Laplacian 
$$
 \Levy_h[u](x):=\frac{1}{2}\Delta_h u(x)+ \frac{1}{2}\Delta_{h \rho} u(x) 
$$
with measure 
\[
\mu(z)= \sum_{i=1}^d \frac{1}{2h^2} \big(\delta_{he_i}(z)+\delta_{-he_i}(z)\big) +\sum_{i=1}^d \frac{1}{2h^2 \rho^2} \big(\delta_{h \rho e_i}(z)+\delta_{-h \rho e_i}(z)\big). \]
By Lemma \ref{lem:dense},
$
G(\supp (\mu))=h (\Z+\rho\Z)^d$ is dense
in $\R^d$ if and only if $\rho \notin\Q$. This is also a necessary and sufficient condition on $\Levy_h$ to satisfy Liouville (cf. Figure \ref{fig:SeveralSupports2}). 
\end{example}

\begin{figure}[h!]
	\centering
\includegraphics[width=0.65\textwidth]{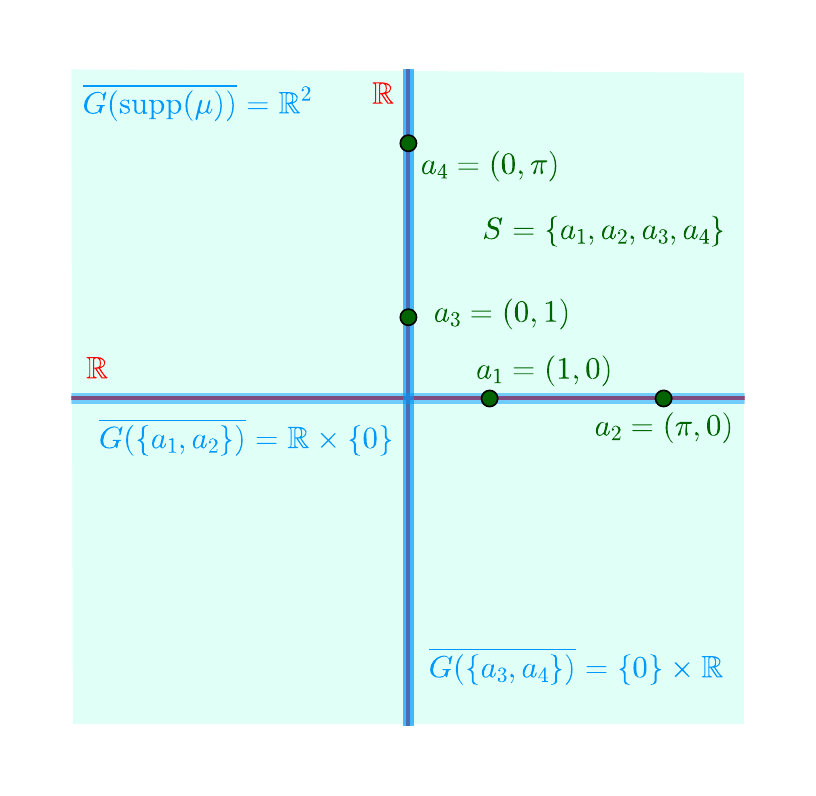}
\caption{ Graphical representation of $\supp (\mu)$ for $\mathcal{\Levy}_h$ in Example \ref{ex:discLapMultiDbis} with $d=2$,  $h=1$, and $\rho=\pi$.  We have $\supp (\mu)=S \cup (-S)$ and $\overline{G(\supp(\mu))}=\R \times \R$. }
	\label{fig:SeveralSupports2}
\end{figure}

\section*{Acknowledgements}

J. Endal and E. R. Jakobsen were supported by the Toppforsk (research excellence) project Waves and Nonlinear Phenomena (WaNP), grant no. 250070 from the Research Council of Norway. 
F. del Teso and J. Endal are grateful to  Laboratoire de Math\'ematiques de Besan\c{c}on  (LMB, UBFC) and   Ecole Nationale Sup\'erieure de M\'ecanique et des Microtechniques (ENSMM)  for providing accommodation during their visit in May in 2018.


\appendix

\section{Uniqueness of the decomposition $\mu=\sum \mu_a$}\label{sec:uniq}

In this appendix, we discuss the uniqueness of the decomposition \eqref{gfnl} in Theorem \ref{thm:consequences-d}. We will need the elementary lemma below for the uniqueness of the decomposition in Theorem \ref{decom:opt}.

\begin{lemma}\label{lem:uniq}
A closed subgroup $G$ of $\R^d$ can be written in a unique way in the form $G=V \oplus \Lambda$ for some vector space $V \subseteq \R^d$ and relative lattice $\Lambda \subseteq \R^d$ such that $V \perp \Lambda$. We then write $G=V \oplus_\perp \Lambda$.
\end{lemma}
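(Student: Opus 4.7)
The plan is to establish existence by orthogonalizing the decomposition provided by Theorem \ref{decom:opt}, and then to argue uniqueness in two steps: first showing that $V$ is intrinsically determined by $G$, after which the orthogonality constraint pins down $\Lambda$.

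\textbf{Existence.} First I would apply Theorem \ref{decom:opt} to obtain $G=V\oplus\Lambda'$ with $V$ a vector subspace of $\R^d$ and $\Lambda'=\oplus_{i=1}^m a_i\Z$ a relative lattice satisfying $V\cap\textup{Span}_\R\Lambda'=\{0\}$. To enforce orthogonality, set $\tilde a_i:=a_i-\textup{proj}_V(a_i)\in V^\perp$ and $\Lambda:=\oplus_{i=1}^m\tilde a_i\Z$. The $\tilde a_i$ are linearly independent (any nontrivial linear relation would place a nonzero element of $\textup{Span}_\R\Lambda'$ inside $V$), so $\Lambda$ is a relative lattice lying in $V^\perp$. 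Writing $g=v+\sum n_ia_i=(v+\sum n_i\textup{proj}_V(a_i))+\sum n_i\tilde a_i$ shows $G=V+\Lambda$, and $V\cap\Lambda\subseteq V\cap V^\perp=\{0\}$ gives $G=V\oplus_\perp\Lambda$.

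\textbf{Uniqueness of $V$.} I would characterize $V$ intrinsically as
\[
V^\star:=\bigl\{x\in\R^d:\R x\subseteq G\bigr\}.
\]
The inclusion $V\subseteq V^\star$ is immediate from $V\subseteq G$. For the reverse inclusion, fix $x\in V^\star$ and, using $\R^d=V\oplus\textup{Span}_\R\Lambda\oplus W$ for some algebraic complement $W$, let $\pi:\R^d\to\textup{Span}_\R\Lambda$ be the corresponding continuous linear projection. For every $t\in\R$, $tx\in G$ decomposes as $tx=v(t)+\lambda(t)$ with $v(t)\in V$ and $\lambda(t)\in\Lambda$, so $\pi(tx)=\lambda(t)$. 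Hence $t\mapsto\lambda(t)$ is a continuous map from $\R$ into the discrete set $\Lambda$ (discreteness from Lemma \ref{lem:lattice}) with $\lambda(0)=0$, which forces $\lambda(t)\equiv0$. Therefore $tx\in V$ for all $t$, and $x\in V$. This is the main obstacle of the proof, as it is what guarantees the invariant meaning of $V$.

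\textbf{Uniqueness of $\Lambda$.} Once $V$ is determined, I would show that any orthogonal complement satisfies $\Lambda=G\cap V^\perp$. The inclusion $\Lambda\subseteq G\cap V^\perp$ is immediate. Conversely, given $g\in G\cap V^\perp$, decompose $g=v+\lambda$ with $v\in V$, $\lambda\in\Lambda\subseteq V^\perp$; then $v=g-\lambda\in V^\perp$, so $v\in V\cap V^\perp=\{0\}$ and $g=\lambda\in\Lambda$. This identifies $\Lambda$ intrinsically and completes the uniqueness.
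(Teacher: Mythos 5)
Your proof is correct, and the uniqueness argument for $V$ takes a genuinely different route from the paper's. The paper exploits orthogonality directly: it picks $\varepsilon>0$ so that $\Lambda\cap B_\varepsilon(0)=\{0\}=\tilde{\Lambda}\cap B_\varepsilon(0)$, uses the Pythagorean identity $|v+a|^2=|v|^2+|a|^2$ (valid because $v\perp a$) to conclude $G\cap B_\varepsilon(0)=V\cap B_\varepsilon(0)=\tilde{V}\cap B_\varepsilon(0)$, and then takes linear spans to get $V=\tilde V$. You instead characterize $V$ intrinsically as $V^\star=\{x:\R x\subseteq G\}$ and prove $V=V^\star$ via a connectedness argument: the map $t\mapsto\pi(tx)=t\pi(x)$ is continuous from $\R$ into the discrete set $\Lambda$ and vanishes at $t=0$, hence is identically zero. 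Both routes hinge on discreteness of the lattice (Lemma \ref{lem:lattice}), but yours has the small extra payoff of not using orthogonality at all in this step, so it in fact shows that the vector space part $V$ is uniquely determined in \emph{any} decomposition as in Theorem \ref{decom:opt}, orthogonal or not; the orthogonality is then needed only to pin down $\Lambda$. The existence step (orthogonalize the lattice generators by projecting off $V$) and the identification $\Lambda=G\cap V^\perp$ for uniqueness of $\Lambda$ coincide in substance with the paper's argument.
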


\begin{remark}
For an illustrative example, see Figure \ref{fig:OrtoProjj}.
\end{remark}

\begin{figure}[h!]
	\centering
	\includegraphics[width=0.65\textwidth]{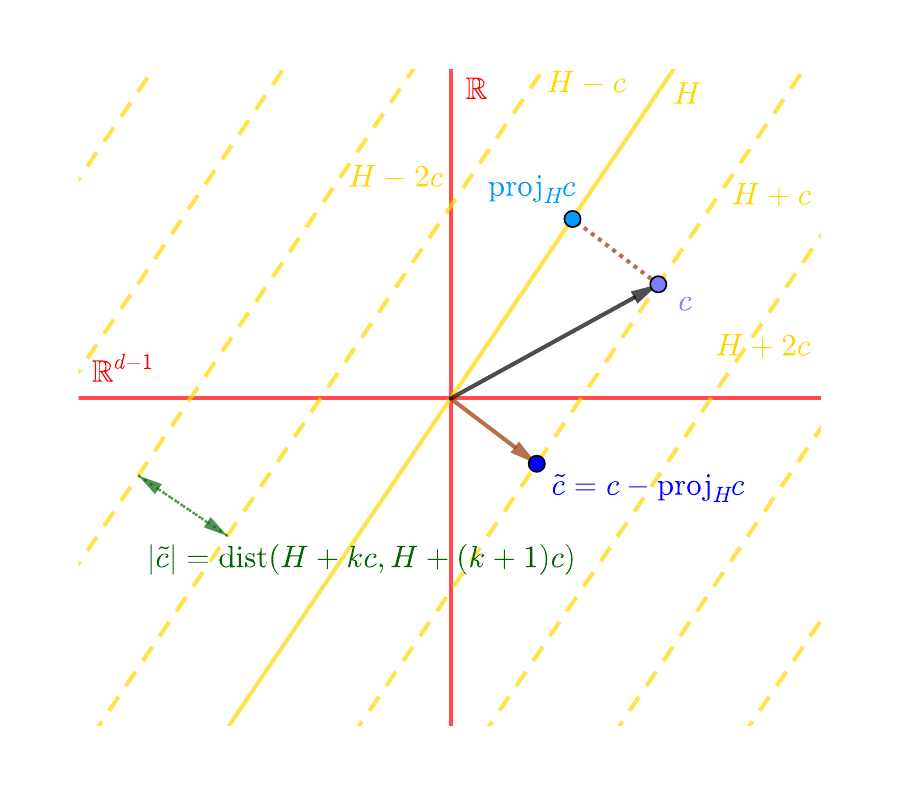}
	\caption{Graphical representation of $G=V \oplus \Lambda =V \oplus_\perp \tilde{\Lambda}$ with $V=H$ of codimension $1$, $\Lambda =c \Z$, {{and  $\tilde{\Lambda}=\tilde{c} \Z $.}}}
	\label{fig:OrtoProjj}
\end{figure}

\begin{proof}[Proof of Lemma \ref{lem:uniq}]
Let us first show the existence. By Theorem \ref{decom:opt}, $G=V \oplus \Lambda$ with $V$ and $\Lambda$ not necessarily orthogonal. Take then
$$
\tilde{\Lambda}:=\{a-\textup{proj}_V(a):a \in \Lambda\}
$$
with the orthogonal projection onto $V$, and note that 
 $
V\oplus\Lambda=V\oplus_\perp\tilde{\Lambda}
$.  This completes the proof of the existence.

Let us now prove the uniqueness. Assume that $G=V\oplus_\perp\Lambda=\tilde{V}\oplus_\perp\tilde{\Lambda}$ for vectors spaces $V,\tilde{V}$ and relative lattices $\Lambda,\tilde{\Lambda}$.  Since lattices are discrete by Lemma \ref{lem:lattice}, there exists a ball $B_{\varepsilon}(0)$ of $\R^d$ such that
\begin{equation}\label{uniq-detail1}
\Lambda \cap B_{\varepsilon}(0)=\tilde{\Lambda} \cap B_{\varepsilon}(0)=\{0\}.
\end{equation} 
We then claim that
\begin{equation}\label{uniq-detail2}
G \cap B_{\varepsilon}(0)=V \cap B_{\varepsilon}(0)=\tilde{V} \cap B_{\varepsilon}(0).
\end{equation}
Indeed, if $x\in G \cap B_{\varepsilon}(0)$ then $x=v+a$ for some $v \in V$ and $a \in \Lambda$ such that $|v+a| < \varepsilon$ and $v \perp a$. In particular, $|v+a|=\sqrt{|v|^2+|a|^2}$ and it follows that $|a| <\varepsilon$. By \eqref{uniq-detail1}, we deduce that $a=0$ and thus $x = v \in V$ with $|v| <\varepsilon$. This shows that 
$$
G \cap B_{\varepsilon}(0) \subseteq V \cap B_{\varepsilon}(0),
$$ 
and since the reverse inclusion is obvious we deduce the first part of \eqref{uniq-detail2}. We argue similarly for the part in $\tilde{V}$ and complete the proof of our claim \eqref{uniq-detail2}. 
But $V=\textup{Span}_\R (V \cap B_{\varepsilon}(0))$ and $\tilde{V}=\textup{Span}_\R (\tilde{V} \cap B_{\varepsilon}(0))$, and therefore  
$$
V=\tilde{V}.
$$
If now $a \in \Lambda$ then $a=v+\tilde{a}$ with $v \in V$ and $\tilde{a} \in \tilde{\Lambda}$. It follows that
$
v=a-\tilde{a} \perp v
$
and the only possibility is that $v=0$. Hence $a = \tilde{a} \in \tilde{\Lambda}$ and we conclude that $\Lambda \subseteq \tilde{\Lambda}$. We show the reverse inclusion in the same way and conclude the proof.
\end{proof}

Here is now a general decomposition result for Radon measures.

\begin{lemma}
Let $\mu$ be a nonnegative Radon measure on $\R^d \setminus \{0\}$. Then there exists a unique triplet $(V,\Lambda,\{\mu_a\}_{a \in \Lambda})$ such that 
\begin{enumerate}
\item[$\bullet$] $V \subseteq \R^d$ is a vector space,
\item[$\bullet$] $\Lambda \subseteq \R^d$ is a relative lattice,
\item[$\bullet$] $\mu_a$ is a nonnegative Radon measure on $\R^d \setminus \{0\}$ for any $a \in \Lambda$, 
\end{enumerate}
and the following properties hold:   
\begin{equation*}
 \overline{G(\supp(\mu))}=V \oplus_\perp \Lambda  \quad \mbox{and} \quad \mu=\sum_{a \in \Lambda} \mu_a \quad \mbox{with} \quad \supp (\mu_a) \subseteq V+a.
\end{equation*}
\end{lemma}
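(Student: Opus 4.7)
My plan is to construct the triplet explicitly from $\mu$ and then read off uniqueness from the uniqueness already established in Lemma~\ref{lem:uniq}. First, I would set $G:=\overline{G(\supp(\mu))}$, which is a closed subgroup of $\R^d$. Applying Lemma~\ref{lem:uniq} to $G$ produces the (unique) pair $(V,\Lambda)$ with $G=V\oplus_\perp\Lambda$; this already forces $V$ and $\Lambda$ to be uniquely determined by $\mu$, since they depend only on $G$.

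Next I would check that the translates $\{V+a\}_{a\in\Lambda}$ are pairwise disjoint: if $x\in(V+a)\cap(V+b)$, then $v-w=b-a$ for some $v,w\in V$, so $b-a\in V\cap\mathrm{Span}_\R\Lambda=\{0\}$, hence $a=b$. Together with $\supp(\mu)\subseteq G=\bigcup_{a\in\Lambda}(V+a)$, this lets me define, for each $a\in\Lambda$,
\[
\mu_a(B):=\mu\bigl(B\cap(V+a)\bigr)
\]
for every Borel set $B$. Each $\mu_a$ is nonnegative, locally finite (being dominated by $\mu$), supported in $V+a$, and since $\Lambda$ is countable (Remark~\ref{rem:lattice}\eqref{item:countable}) and the $V+a$ partition the portion of $\R^d$ that carries $\mu$, one checks by monotone convergence that $\mu=\sum_{a\in\Lambda}\mu_a$. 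This gives existence.

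For uniqueness, suppose $(V,\Lambda,\{\mu_a\}_{a\in\Lambda})$ and $(V',\Lambda',\{\mu'_a\}_{a\in\Lambda'})$ both satisfy the required properties. Since $\overline{G(\supp\mu)}=V\oplus_\perp\Lambda=V'\oplus_\perp\Lambda'$, Lemma~\ref{lem:uniq} yields $V=V'$ and $\Lambda=\Lambda'$. For a fixed $a\in\Lambda$ and any Borel set $B$, using $\supp(\mu_b)\subseteq V+b$, the disjointness of the $V+b$'s, and the decomposition $\mu=\sum_b \mu_b$,
\[
\mu_a(B)=\mu_a\bigl(B\cap(V+a)\bigr)=\sum_{b\in\Lambda}\mu_b\bigl(B\cap(V+a)\bigr)=\mu\bigl(B\cap(V+a)\bigr),
\]
and the identical computation for $\mu'_a$ gives $\mu'_a(B)=\mu(B\cap(V+a))$. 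Hence $\mu_a=\mu'_a$ for every $a\in\Lambda$.

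The only genuinely delicate point is the disjointness of the affine translates $V+a$, which is what makes the formula $\mu_a(\cdot)=\mu(\cdot\cap(V+a))$ well-posed and simultaneously forces uniqueness of the $\mu_a$. This, however, is essentially the same observation as \eqref{lattice-detail-bis} in the proof of Theorem~\ref{thm:consequences-d}, and I expect to simply reuse it. Everything else is bookkeeping with Radon measures and the already-proved Lemma~\ref{lem:uniq}.
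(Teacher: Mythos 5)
Your proposal is correct and follows essentially the same route as the paper: use Lemma~\ref{lem:uniq} to get the unique orthogonal decomposition $\overline{G(\supp(\mu))}=V\oplus_\perp\Lambda$, observe the translates $V+a$ are pairwise disjoint, define $\mu_a(\cdot)=\mu(\cdot\cap(V+a))$ as trace measures, and read off uniqueness of $\{\mu_a\}$ from the support constraint. The only cosmetic difference is that the paper splits into the cases $V=\R^d$ and $V\neq\R^d$, while you handle both uniformly; this is immaterial.
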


\begin{proof}
Since $\overline{G(\supp (\mu))}$ is closed,  Lemma \ref{lem:uniq} implies that  $$
\overline{G(\supp(\mu))}=V \oplus_\perp \Lambda
$$
where such a decomposition is unique. If $V=\R^d$ then $\Lambda=\{0\}$ and the only family $\{\mu_a\}_{a \in \Lambda}$ that will satisfy the theorem is given by the singleton $\{\mu_{a}=\mu\}_{a \in \{0\}}$. If now $V \neq \R^d$ then $V \oplus_\perp \Lambda=\cup_{a \in \Lambda} (V+a)$ where the  union  is disjoint, and we can argue as in the proof of Theorem \ref{thm:consequences-d} to show that $\mu=\sum_{a \in \Lambda} \mu_a$ with the  trace  measures $\mu_{a}(\cdot)=\mu(\cdot \cap (V+a))$ satisfying 
$$
\supp (\mu_a) \subseteq V+a.
$$
Moreover, if $\mu =\sum_{a \in \Lambda} \tilde{\mu}_a$ for other Radon measures satisfying $\supp (\tilde{\mu}_a) \subseteq V+a$ then for any fixed $a_0 \in \Lambda$ and all Borel sets $B \subseteq V+a_0$,
$$
\mu(B)=\sum_{a \in \Lambda} \mu_a(B)=\mu_{a_0}(B) 
$$ 
and similarly $\mu(B)=\tilde{\mu}_{a_0}(B)$. This shows that $\mu_{a_0}=\tilde{\mu}_{a_0}$ for any such arbitrarily given $a_0 \in \Lambda$ and completes the proof.
\end{proof}

 As an immediate consequence, we get the following  uniqueness result on the decomposition \eqref{gfnl} in Theorem \ref{thm:consequences-d}.

\begin{corollary}\label{cor:uniq}
Assume \eqref{as:mus} and the negated version \eqref{NL} of the Liouville theorem. Then there exists a unique triplet $(V,\Lambda,\{\mu_a\}_{a \in \Lambda})$  satisfying \eqref{gfnl}--\eqref{levy-d-arxiv} and 
$$
\overline{G(\supp(\mu))}=V \oplus_\perp \Lambda.
$$ 
\end{corollary}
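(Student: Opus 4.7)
The plan is to derive Corollary \ref{cor:uniq} as an essentially immediate consequence of the decomposition lemma for Radon measures stated just above it, combined with Theorem \ref{thm:consequences-d} and the uniqueness part of Lemma \ref{lem:uniq}. All the substantial work has already been done; what remains is to check that the canonical decomposition coming from the lemma in fact meets all the clauses \eqref{direct-sum}--\eqref{levy-d-arxiv}.

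First I would apply the decomposition lemma to the given measure $\mu$ (which is a nonnegative Radon measure on $\R^d \setminus \{0\}$ by \eqref{as:mus}). This immediately produces a unique triplet $(V,\Lambda,\{\mu_a\}_{a \in \Lambda})$ for which $\overline{G(\supp(\mu))} = V \oplus_\perp \Lambda$, $\mu = \sum_{a \in \Lambda} \mu_a$ and $\supp(\mu_a) \subseteq V+a$, so \eqref{support-d} and the orthogonality condition are built in and uniqueness is already settled at the structural level.

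Next I would verify the remaining properties. For \eqref{direct-sum}: orthogonality $V \perp \Lambda$ gives $V \cap \textup{Span}_\R \Lambda = \{0\}$ (any vector in the intersection is orthogonal to itself), and $V \neq \R^d$ because otherwise $\overline{G(\supp(\mu))} = \R^d$, contradicting Theorem \ref{thm:main-d} under the standing hypothesis \eqref{NL}. For the symmetry clause in \eqref{levy-d}, I would use that $V = -V$ and $\Lambda = -\Lambda$, so for any Borel set $B$,
\[
\mu_a(B) = \mu\bigl(B \cap (V+a)\bigr) = \mu\bigl(-B \cap (V-a)\bigr) = \mu_{-a}(-B),
\]
by the symmetry of $\mu$. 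For the integrability clause \eqref{levy-d-arxiv}, I would repeat verbatim the argument at the end of the proof of Theorem \ref{thm:consequences-d}: the discreteness of $\Lambda$ (Lemma \ref{lem:lattice}) together with $V \cap \textup{Span}_\R \Lambda = \{0\}$ yields $\inf_{a \in \Lambda \setminus \{0\}} \textup{dist}(0,V+a) > 0$, which upgrades the L\'evy integrability of $\mu$ to boundedness of each $\mu_a$ for $a \neq 0$ and summability of their total masses. Finally, \eqref{gfnl} is just the rewriting of $\Levy^\mu$ corresponding to $\mu = \sum_{a \in \Lambda} \mu_a$ once one isolates the principal-value contribution coming from the only piece whose support may touch the singularity at the origin, namely $\mu_0$.

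For uniqueness, I would argue as follows: if $(\tilde V,\tilde \Lambda,\{\tilde \mu_a\}_{a \in \tilde \Lambda})$ is another triplet meeting \eqref{gfnl}--\eqref{levy-d-arxiv} with $\overline{G(\supp(\mu))} = \tilde V \oplus_\perp \tilde \Lambda$, then Lemma \ref{lem:uniq} forces $V = \tilde V$ and $\Lambda = \tilde \Lambda$, after which the disjointness $(V+a) \cap (V+b) = \emptyset$ for $a \neq b$ in $\Lambda$ (shown in the proof of Theorem \ref{thm:consequences-d}) together with $\supp(\mu_a),\supp(\tilde \mu_a) \subseteq V+a$ forces $\mu_a(B) = \mu(B \cap (V+a)) = \tilde \mu_a(B)$ for every Borel $B$.

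The proof is essentially bookkeeping; the only mildly delicate point is the symmetry identity $\mu_a(\cdot) = \mu_{-a}(-\cdot)$, which one has to write out carefully to make sure the reflection $B \mapsto -B$ interchanges $V+a$ with $V-a$ (and hence $\mu_a$ with $\mu_{-a}$) rather than producing a spurious translation. Everything else is inherited from the preceding lemma, from Theorem \ref{thm:consequences-d}, and from Theorem \ref{thm:main-d}.
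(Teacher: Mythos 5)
Your proposal is correct and matches the paper's intended argument: the paper simply asserts Corollary~\ref{cor:uniq} as ``an immediate consequence'' of the preceding decomposition lemma for Radon measures, and your write-up supplies exactly the bookkeeping (checking \eqref{direct-sum}--\eqref{levy-d-arxiv} for the canonical triplet $\mu_a(\cdot)=\mu(\cdot\cap(V+a))$, invoking Theorem~\ref{thm:main-d} for $V\neq\R^d$, Lemma~\ref{lem:uniq} for $V=\tilde V$, $\Lambda=\tilde\Lambda$, and the disjointness of the $V+a$ for the uniqueness of the $\mu_a$) that the paper leaves implicit.
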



\section{Proof of Lemma \ref{lem:dense}}\label{proof:density}

The proof uses elementary arguments (cf. e.g. \cite{God87}). Let us give the details for completeness.

\begin{lemma}\label{lem:DistanceArbitrarySmall}
Let $a,b>0$ and define
$
 G^* :=\{nb-\lfloor n\frac{b}{a}\rfloor a \,:\, n\in\N_+\}. 
$
If $\frac{b}{a}\not\in\Q$, then $\inf  G^* =0$.
\end{lemma}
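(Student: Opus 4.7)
The plan is to rewrite $G^*$ in terms of fractional parts and then apply a Dirichlet-style pigeonhole argument. Setting $\alpha := \tfrac{b}{a}$ and using $\{x\} := x - \floor{x}$ for the fractional part, I would first observe that
$$
nb - \floor{n\alpha}\, a = a\big(n\alpha - \floor{n\alpha}\big) = a \{n\alpha\},
$$
so that $G^* = \{a\{n\alpha\} : n \in \N_+\}$. Since $a > 0$, it is then enough to prove that $\inf_{n \in \N_+} \{n\alpha\} = 0$ whenever $\alpha \notin \Q$.

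Second, I would fix an arbitrary $N \in \N_+$ and apply the pigeonhole principle to the $N+1$ points $0, \{\alpha\}, \{2\alpha\}, \dots, \{N\alpha\}$ in $[0,1)$ partitioned into $N$ half-open subintervals of length $1/N$: two of these points must share a subinterval, producing integers $0 \leq m < n \leq N$ with $|\{n\alpha\} - \{m\alpha\}| < 1/N$. Setting $k := n-m \in \N_+$ and $\ell := \floor{n\alpha} - \floor{m\alpha} \in \Z$, a direct computation gives $k\alpha - \ell = \{n\alpha\} - \{m\alpha\} \in (-1/N, 1/N) \setminus \{0\}$, where the zero value is excluded since $\alpha$ is irrational and $k \geq 1$. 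Examining the sign of $k\alpha - \ell$ then yields either $\{k\alpha\} \in (0, 1/N)$ or $\{k\alpha\} \in (1 - 1/N, 1)$.

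The main obstacle is the second case, since $G^*$ is indexed by $\N_+$ only and negating the index is not allowed. I would resolve this by iterating: if $\{k\alpha\} = 1 - \delta$ with $\delta \in (0, 1/N)$, then set $j := \floor{1/\delta} \in \N_+$ and write $jk\alpha = j\floor{k\alpha} + j - j\delta$. By definition of $j$ one has $1 - \delta < j\delta \leq 1$, and the irrationality of $\alpha$ forbids $j\delta = 1$ (as $jk\alpha$ would otherwise be an integer with $jk \geq 1$). Therefore $j\delta < 1$, and one obtains $\{jk\alpha\} = 1 - j\delta \in (0, \delta) \subseteq (0, 1/N)$, which does give an element of $G^*$ below $a/N$.

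Combining both cases, for every $N \in \N_+$ there exists $r \in \N_+$ such that $a\{r\alpha\} \in (0, a/N)$, so that $0 \leq \inf G^* \leq a/N$. Letting $N \to +\infty$ yields $\inf G^* = 0$, as claimed.
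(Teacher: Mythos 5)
Your proof is correct but follows a genuinely different route from the paper's. You rewrite $G^*$ as $\{a\{n\alpha\}\}$ (fractional parts of multiples of $\alpha = b/a$), run a Dirichlet-style pigeonhole argument to find $k\in\N_+$ with $k\alpha$ within $1/N$ of an integer, and then --- this is the nontrivial part --- handle the unwanted case $\{k\alpha\}\in(1-1/N,1)$ by iterating with $j=\floor{1/\delta}$ to force $\{jk\alpha\}=1-j\delta\in(0,\delta)$. That iteration step is the key to staying inside $\N_+$ rather than using negative multiples, and your check that $j\delta\neq 1$ via irrationality is exactly the right place to use the hypothesis $\alpha\notin\Q$. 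The paper instead argues by contradiction: it assumes $\inf G^*=l>0$, picks $N$ with $Nl<a\leq(N+1)l$, and for $n_k b-\floor{n_k b/a}a=l+\varepsilon_k$ close to $l$ shows that the element $(N+1)n_k b-\floor{(N+1)n_k b/a}a$ of $G^*$ equals $r+(N+1)\varepsilon_k$ with $r:=(N+1)l-a<l$, giving the contradiction once $\varepsilon_k$ is small. Both proofs are of comparable difficulty and both must treat a delicate edge case (your near-$1$ case versus the paper's $r=0$ case); your pigeonhole approach is arguably the more classical path, being essentially the standard proof of Weyl's density observation specialized to one frequency, whereas the paper's argument is self-contained and proceeds directly from the definition of the infimum without invoking Dirichlet's approximation.
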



\begin{proof}[Proof of Lemma \ref{lem:DistanceArbitrarySmall}]

First note that $ G^* \subseteq (0,a)$:  Since $\frac{b}{a}\not\in\Q$, $\floor{n\frac{b}{a}}<n\frac{b}{a}<\floor{n\frac{b}{a}}+1$ and thus $\floor{n\frac{b}{a}}a<nb<\floor{n\frac{b}{a}}a+a$. Hence
\[
0<nb-\floor{n\frac{b}{a}}a<a.
\]

To prove the result we assume by contradiction that $\inf  G^* =l>0$ for some $l\in(0,a)$. Since $l$ is strictly positive, we can find $N\in \N_+$ such that 
\[
 Nl<a  \leq (N+1)l .
\]
Define $r:= (N+1)l-a$ and note that
\begin{equation}\label{detail-sp}
r= Nl-a +l < l  < a.
\end{equation}
By definition of infimum, there exist sequences  $\{\varepsilon_k\}_{k}\subset[0,1)$ and $\{n_k\}_{k}\subseteq \N_+$  with $\varepsilon_k\to0$  as $k\to +\infty$ such that 
\begin{equation*}
n_k b- \floor{n_k\frac{b}{a}}a=l+\varepsilon_k.
\end{equation*}
 If $\varepsilon_k=0$ then $\frac{l}{a} \notin \Q$ and therefore $r>0$. This proves that 
\begin{equation*}
\text{either} \quad r>0 \quad \mbox{or} \quad \{\varepsilon_k\}_{k} \subset(0,1).
\end{equation*}
Using then that 
\begin{equation}\label{eq:est1}
\begin{split}
(N+1)n_k b&= (N+1) \floor{n_k\frac{b}{a}}a+(N+1)l+(N+1)\varepsilon_k\\
&=(N+1)\floor{n_k\frac{b}{a}}a+a+r+(N+1)\varepsilon_k,\\
\end{split}
\end{equation}
it follows that
$((N+1)\floor{n_k\frac{b}{a}}+1)a< (N+1)n_kb$, 
and hence,
\begin{equation}\label{detail-e-1}
(N+1)\floor{n_k\frac{b}{a}}+1< (N+1)n_k\frac{b}{a}.
\end{equation}
Moreover, since  $r<a$ by \eqref{detail-sp}, we have $r+(N+1)\varepsilon_k<a$ for $k$ large enough,  which by \eqref{eq:est1} again leads to $(N+1)n_k b<((N+1)\floor{n_k\frac{b}{a}}+1)a+a$
and then
\begin{equation}\label{detail-e-2}
(N+1)n_k \frac{b}{a}<((N+1)\floor{n_k\frac{b}{a}}+1)+1.
\end{equation}
 By \eqref{detail-e-1} and \eqref{detail-e-2},  we conclude that 
$$
(N+1)\floor{n_k\frac{b}{a}}+1= \floor{(N+1)n_k\frac{ b}{a}}
$$
 for all sufficiently large $k$. 
Then since $(N+1)n_k b-\floor{(N+1)n_k \frac{b}{a}}a\in  G^* $, we can use \eqref{eq:est1} with $\varepsilon_k<\frac{l-r}{N+1}$  (remember that $l-r>0$ by \eqref{detail-sp})  to get 
\[
(N+1)n_k b-\floor{(N+1)n_k \frac{b}{a}}a=r+(N+1)\varepsilon_k< r+(N+1)\frac{l-r}{N+1}=l.
\]
This contradicts the fact that $\inf  G^* =l$ and concludes the proof.
\end{proof}

\begin{proof}[Proof of Lemma \ref{lem:dense}]
 The only if part can be shown from e.g. Remark \ref{rem:dense-discrete}, but we omit the detail since it was not used during the proofs of our main results. Let us focus on the if part. 
 It suffices to prove it for $a,b >0$, which we assume together with $\frac{b}{a} \notin \Q$.   Fix any  $z \in [0,+\infty)$  and $\varepsilon>0$. Consider the set 
$$
{ G^* }= \Big\{nb-\floor{n\frac{b}{a}}a: \  n\in \N_+\Big\}
$$
as defined in Lemma \ref{lem:DistanceArbitrarySmall} 
and  recall  that $ G^* \subseteq (a \Z+b \Z)\cap (0,a)$  (cf. the proof of Lemma \ref{lem:DistanceArbitrarySmall}).  By Lemma \ref{lem:DistanceArbitrarySmall}, there is $ z_0  \in  G^*$ such that $0< z_0 <\varepsilon$. We then take  $N \in \N_+$  such that 
\[
 N z_0 < z  \leq  (N+1)z_0, 
\] 
and observe that
$
 |z-N z_0|< z_0<\varepsilon. 
$
Since $ z_0 =nb-\floor{n\frac{b}{a}}a$ for some $n\in\N_+$,  $z_0$  obviously belong to  $a \Z+b \Z$. We can do the same reasoning if $z \in (-\infty,0]$  and we conclude that  $a \Z+b \Z$  is dense in $\R$.
\end{proof}

%
%
%
%
%
%
%
%


\begin{thebibliography}{10}

\bibitem{AlDTEnJa18}
N.~Alibaud, F.~del Teso, J.~Endal, and E.~R. Jakobsen.
\newblock \textit{The Liouville theorem and linear operators satisfying the
  maximum principle. A complete characterization in the constant coefficient
  case}.
\newblock Work in progress.

\bibitem{A-VMaRoT-M10}
F.~Andreu-Vaillo, J.~M. Maz{\'o}n, J.~D. Rossi, and J.~J. Toledo-Melero.
\newblock {\em Nonlocal diffusion problems}, volume 165 of {\em Mathematical
  Surveys and Monographs}.
\newblock American Mathematical Society, Providence, RI; Real Sociedad
  Matem\'atica Espa\~nola, Madrid, 2010.

\bibitem{App09}
D.~Applebaum.
\newblock {\em L\'evy processes and stochastic calculus}, volume 116 of {\em
  Cambridge Studies in Advanced Mathematics}.
\newblock Cambridge University Press, Cambridge, second edition, 2009.

\bibitem{BaBaGu00}
M.~T. Barlow, R.~F. Bass, and C.~Gui.
\newblock The {L}iouville property and a conjecture of {D}e {G}iorgi.
\newblock {\em Comm. Pure Appl. Math.}, 53(8):1007--1038, 2000.

\bibitem{BaDPG-MQu17}
B.~Barrios, L.~Del~Pezzo, J.~Garc\'{\i}a-Meli\'an, and A.~Quaas.
\newblock A {L}iouville theorem for indefinite fractional diffusion equations
  and its application to existence of solutions.
\newblock {\em Discrete Contin. Dyn. Syst.}, 37(11):5731--5746, 2017.

\bibitem{BoKuNo02}
K.~Bogdan, T.~Kulczycki, and A.~Nowak.
\newblock Gradient estimates for harmonic and {$q$}-harmonic functions of
  symmetric stable processes.
\newblock {\em Illinois J. Math.}, 46(2):541--556, 2002.

\bibitem{Bou98}
N.~Bourbaki.
\newblock {\em General topology. {C}hapters 5--10}.
\newblock Elements of Mathematics (Berlin). Springer-Verlag, Berlin, 1998.
\newblock Translated from the French, Reprint of the 1989 English translation.

\bibitem{BrCo18}
J.~Brasseur and J.~Coville.
\newblock A counterexample to the {L}iouville property of some nonlocal
  problems.
\newblock Preprint, arXiv:1804.07485v1 [math.AP], 2018.

\bibitem{BrCoHaVa17}
J.~Brasseur, J.~Coville, F.~Hamel, and E.~Valdinoci.
\newblock Liouville type results for a nonlocal obstacle problem.
\newblock Preprint, arXiv:1712.09877v1 [math.AP], 2017.

\bibitem{CaSi14}
X.~Cabr\'e and Y.~Sire.
\newblock Nonlinear equations for fractional {L}aplacians, {I}: {R}egularity,
  maximum principles, and {H}amiltonian estimates.
\newblock {\em Ann. Inst. H. Poincar\'e Anal. Non Lin\'eaire}, 31(1):23--53,
  2014.

\bibitem{ChDALi15}
W.~Chen, L.~D'Ambrosio, and Y.~Li.
\newblock Some {L}iouville theorems for the fractional {L}aplacian.
\newblock {\em Nonlinear Anal.}, 121:370--381, 2015.

\bibitem{Cio12}
A.~Ciomaga.
\newblock On the strong maximum principle for second-order nonlinear parabolic
  integro-differential equations.
\newblock {\em Adv. Differential Equations}, 17(7-8):635--671, 2012.

\bibitem{Cou64}
P.~Courr\`ege.
\newblock G\'en\'erateur infinit\'esimal d'un semi-groupe de convolution sur
  {$R^{n}$}, et formule de {L}\'evy-{K}hinchine.
\newblock {\em Bull. Sci. Math. (2)}, 88:3--30, 1964.

\bibitem{Cov08}
J.~Coville.
\newblock Remarks on the strong maximum principle for nonlocal operators.
\newblock {\em Electron. J. Differential Equations}, pages No. 66, 10, 2008.

\bibitem{DTEnJa17b}
F.~del Teso, J.~Endal, and E.~R. Jakobsen.
\newblock On distributional solutions of local and nonlocal problems of porous
  medium type.
\newblock {\em C. R. Math. Acad. Sci. Paris}, 355(11):1154--1160, 2017.

\bibitem{DTEnJa17a}
F.~del Teso, J.~Endal, and E.~R. Jakobsen.
\newblock Uniqueness and properties of distributional solutions of nonlocal
  equations of porous medium type.
\newblock {\em Adv. Math.}, 305:78--143, 2017.

\bibitem{DTEnJa18a}
F.~del Teso, J.~Endal, and E.~R. Jakobsen.
\newblock Robust numerical methods for nonlocal (and local) equations of porous
  medium type. {P}art {I}: {T}heory.
\newblock Preprint, arXiv, arXiv:1801.07148v1 [math.NA], 2018.

\bibitem{DTEnJa18b}
F.~del Teso, J.~Endal, and E.~R. Jakobsen.
\newblock Robust numerical methods for nonlocal (and local) equations of porous
  medium type. {P}art {II}: {S}chemes and experiments.
\newblock Preprint, arXiv:1804.04985v1 [math.NA], 2018.

\bibitem{EfHaCL80}
E.~G. Effros, D.~E. Handelman, and C.~L. Shen.
\newblock Dimension groups and their affine representations.
\newblock {\em Amer. J. Math.}, 102(2):385--407, 1980.

\bibitem{Fal15}
M.~M. Fall.
\newblock Entire {$s$}-harmonic functions are affine.
\newblock {\em Proc. Amer. Math. Soc.}, 144(6):2587--2592, 2016.

\bibitem{FaFe15}
M.~M. Fall and V.~Felli.
\newblock Unique continuation properties for relativistic {S}chr\"odinger
  operators with a singular potential.
\newblock {\em Discrete Contin. Dyn. Syst.}, 35(12):5827--5867, 2015.

\bibitem{FaWe16}
M.~M. Fall and T.~Weth.
\newblock Liouville theorems for a general class of nonlocal operators.
\newblock {\em Potential Anal.}, 45(1):187--200, 2016.

\bibitem{God87}
R.~Godement.
\newblock {\em Cours d'alg\`ebre}, volume~5 of {\em Collection Enseignement des
  Sciences [Collection: The Teaching of Science]}.
\newblock Hermann, Paris, third edition, 1987.

\bibitem{GoMo16}
S.~M. Gonek and H.~L. Montgomery.
\newblock Kronecker's approximation theorem.
\newblock {\em Indag. Math. (N.S.)}, 27(2):506--523, 2016.

\bibitem{HuDuWu18}
Q.~Huang, J.~Duan, and J.-L. Wu.
\newblock Maximum principles for nonlocal parabolic {W}aldenfels operators.
\newblock {\em Bulletin of Mathematical Sciences}, 2018.

\bibitem{Lan72}
N.~S. Landkof.
\newblock {\em Foundations of modern potential theory}.
\newblock Springer-Verlag, New York-Heidelberg, 1972.

\bibitem{Mar03}
J.~Martinet.
\newblock {\em Perfect lattices in {E}uclidean spaces}, volume 327 of {\em
  Grundlehren der Mathematischen Wissenschaften [Fundamental Principles of
  Mathematical Sciences]}.
\newblock Springer-Verlag, Berlin, 2003.

\bibitem{PrZa04}
E.~Priola and J.~Zabczyk.
\newblock Liouville theorems for non-local operators.
\newblock {\em J. Funct. Anal.}, 216(2):455--490, 2004.

\bibitem{R-OSe16a}
X.~Ros-Oton and J.~Serra.
\newblock Boundary regularity for fully nonlinear integro-differential
  equations.
\newblock {\em Duke Math. J.}, 165(11):2079--2154, 2016.

\bibitem{R-OSe16b}
X.~Ros-Oton and J.~Serra.
\newblock Regularity theory for general stable operators.
\newblock {\em J. Differential Equations}, 260(12):8675--8715, 2016.

\bibitem{Rud87}
W.~Rudin.
\newblock {\em Real and complex analysis}.
\newblock McGraw-Hill Book Co., New York, third edition, 1987.

\bibitem{Sat99}
K.~Sato.
\newblock {\em L\'evy processes and infinitely divisible distributions},
  volume~68 of {\em Cambridge Studies in Advanced Mathematics}.
\newblock Cambridge University Press, Cambridge, 1999.

\bibitem{Ser15b}
J.~Serra.
\newblock {$C^{\sigma+\alpha}$} regularity for concave nonlocal fully nonlinear
  elliptic equations with rough kernels.
\newblock {\em Calc. Var. Partial Differential Equations}, 54(4):3571--3601,
  2015.

\bibitem{Ser15a}
J.~Serra.
\newblock Regularity for fully nonlinear nonlocal parabolic equations with
  rough kernels.
\newblock {\em Calc. Var. Partial Differential Equations}, 54(1):615--629,
  2015.

\bibitem{ShHa16}
R.~Shankar and T.~Hartland.
\newblock A strong maximum principle for nonlinear nonlocal diffusion
  equations.
\newblock Preprint, arXiv:1602.03522v1 [math.AP], 2016.

\bibitem{StZh13}
P.~R. Stinga and C.~Zhang.
\newblock Harnack's inequality for fractional nonlocal equations.
\newblock {\em Discrete Contin. Dyn. Syst.}, 33(7):3153--3170, 2013.

\bibitem{ZhChCuYu14}
R.~Zhuo, W.~Chen, X.~Cui, and Y.~Zixia.
\newblock A {L}iouville theorem for the fractional {L}aplacian.
\newblock Preprint, arXiv:1401.7402v1 [math.AP], 2014.

\end{thebibliography}
\end{document}